\colorlet{myred}{red!80!black}
\colorlet{myblue}{blue!80!black}
\colorlet{mygreen}{green!60!black}
\colorlet{myorange}{orange!70!red!60!black}
\colorlet{mydarkred}{red!30!black}
\colorlet{mydarkblue}{blue!40!black}
\colorlet{mydarkgreen}{green!30!black}
\tikzset{
  >=latex, 
  node/.style={thick,circle,draw=myblue,minimum size=22,inner sep=0.5,outer sep=0.6},
  node in/.style={node,green!20!black,draw=mygreen!30!black,fill=mygreen!25},
  node hidden/.style={node,blue!20!black,draw=myblue!30!black,fill=myblue!20},
  node convol/.style={node,orange!20!black,draw=myorange!30!black,fill=myorange!20},
  node out/.style={node,red!20!black,draw=myred!30!black,fill=myred!20},
  connect/.style={thick,mydarkblue}, 
  connect arrow/.style={-{Latex[length=4,width=3.5]},thick,mydarkblue,shorten <=0.5,shorten >=1},
  node 1/.style={node in}, 
  node 2/.style={node hidden},
  node 3/.style={node out}
}
\theoremstyle{plain}
\newtheorem{theorem}{Theorem}
\newtheorem{lemma}{Lemma}
\newtheorem*{lemma*}{Lemma}
\newtheorem{proposition}{Proposition}
\theoremstyle{definition}     
\newtheorem{definition}{Definition}
\newtheorem{example}{Example}
\newtheorem{remark}{Remark}
\newtheorem{notation}{Notation}
\numberwithin{theorem}{section}
\numberwithin{definition}{section}
\numberwithin{lemma}{section}
\numberwithin{proposition}{section}
\numberwithin{corollary}{section}
\numberwithin{notation}{section}
\numberwithin{remark}{section}
\numberwithin{example}{section}
\begin{document}

\bibliographystyle{plain}

\title[On the Computation of Tensor Functions]{On the Computation of Tensor Functions under Tensor-Tensor Multiplications with Linear Maps}

\author{Jeong-Hoon Ju}
\address{
\parbox{\linewidth}{Department of Mathematics, Pusan National University, 2 Busandaehak-ro 63beon-gil, Geumjeung-gu, 46241 Busan, Republic of Korea;\\
Humanoid Olfactory Display Center, Pusan National University, 49, Busandaehak-ro, Mulgeum-eup, 50612 Yangsan-si, Gyeongsangnam-do, Republic of Korea}}
\email{jjh793012@naver.com}

\author{Susana L\'opez-Moreno*}
\address{
\parbox{\linewidth}{Department of Mathematics, Pusan National University, 2 Busandaehak-ro 63beon-gil, Geumjeung-gu, 46241 Busan, Republic of Korea;\\
Humanoid Olfactory Display Center, Pusan National University, 49, Busandaehak-ro, Mulgeum-eup, 50612 Yangsan-si, Gyeongsangnam-do, Republic of Korea;\\
Industrial Mathematics Center, Pusan National University, 2 Busandaehak-ro 63beon-gil, Geumjeung-gu, 46241 Busan, Republic of Korea}}
\email{susanalopezmoreno@pusan.ac.kr}

\thanks{*Corresponding author}

\date{\today}

\begin{abstract}
In this paper we study the computation of both algebraic and non-algebraic tensor functions under the tensor-tensor multiplication with linear maps. In the case of algebraic tensor functions, we prove that the asymptotic exponent of both the tensor-tensor multiplication and the tensor polynomial evaluation problem under this multiplication is the same as that of the matrix multiplication, unless the linear map is injective. As for non-algebraic functions, we define the tensor geometric mean and the tensor Wasserstein mean for pseudo-positive-definite tensors under the tensor-tensor multiplication with invertible linear maps, and we show that the tensor geometric mean can be calculated by solving a specific Riccati tensor equation. Furthermore, we show that the tensor geometric mean does not satisfy the resultantal (determinantal) identity in general, which the matrix geometric mean always satisfies. Then we define a pseudo-SVD for the injective linear map case and we apply it on image data compression. 
\end{abstract}

\keywords{tensor-tensor multiplication, tensor rank, computational complexity, tensor mean, SVD}

\subjclass[2020] {68Q17, 15A69, 14N07, 94A08, 47A64}

\maketitle

\section{Introduction}\label{Sec:intr}

A tensor is a multilinear map and, equivalently, a multidimensional array once a basis is fixed for each vector space on which the multilinear map is defined. In the case of linear maps, the corresponding array is a matrix. In the case of bilinear maps, the corresponding array is a cube. If the dimension of the corresponding multidimensional array is $d$, then we say that it is a $d$-th order tensor. For example, a bilinear map is a third-order tensor. 

Even though a tensor is a natural generalization of a matrix, there is a significant difference between them. Unlike the matrix (linear map) case, it is hard to think of a natural composition of two higher-order tensors (multilinear maps). However, as matrices play a role of representing not only linear maps but also data, higher-order tensors can also be used to represent data, colored image data and videos being some of the most important examples. In the case of colored image data, note that each pixel has three values, the RGB channels, and thus, they can be naturally represented as a third-order tensor. Hence, in order to analyze tensor data effectively, it has been necessary to develop the notion of tensor-tensor multiplication regardless of their multilinear structure. 

The notion of tensor-tensor multiplication we mainly research in this paper is the tensor-tensor multiplication with linear maps, which is based on the one with invertible linear maps in \cite{MR3394164}. It is a generalization of the known T-product in \cite{MR2794595}. Briefly, both tensor-tensor multiplications in \cite{MR2794595, MR3394164} work by viewing tensors as stacks of matrices, taking linear combinations of the matrices, and then performing matrix multiplications. Thus, these tensor-tensor multiplications make it possible to do tensor analysis by using matrix theory. 

Since we are using a product that views tensors as stacks of matrices, it is natural to wonder how its complexity compares with that of the matrix multiplication. However, to the best of our knowledge, this research is the first attempt to determine the computational complexity of the tensor-tensor multiplication with linear maps. As we will check in Section \ref{Sec:bilinear_structure}, this tensor-tensor multiplication is a bilinear map and, thus, we can think of its tensor rank and asymptotic exponent. Tensor rank is a measure of algebraic complexity that quantifies how complicated a multilinear map (tensor) is, and it can be considered a generalization of matrix rank. On the other hand, the asymptotic exponent characterizes the complexity of the operation on sufficiently large-sized objects. For detailed definitions of tensor rank and asymptotic exponent, see Section \ref{Sec:Tensor rank}. 

Matrix multiplication is a fundamental object in linear algebra. In addition, since it is also a bilinear map, we can think of its tensor rank and asymptotic exponent. In 1969, Strassen found a new algorithm for the $2 \times 2$ matrix multiplication that only uses $7$ multiplications \cite{MR248973}, whereas the classical algorithm uses $8$ multiplications. This means that the tensor rank of the $2 \times 2$ matrix multiplication is less than or equal to $7$ (cf. it is exactly $7$ \cite{MR274293, MR297115}). Moreover, from recursive $2 \times 2$ block matrix multiplications, we can note that Strassen's algorithm gives us a way to compute the $n \times n$ matrix multiplication for sufficiently large $n$, which is much faster than the classical one. This is how the notion of the asymptotic exponent was derived. The problem of figuring out the asymptotic exponent of matrix multiplication is still open, and it is now one of the flagship problems in algebraic complexity theory. We refer to \cite{MR1440179, blaeser13} for the history and key ideas on the research of the value. 

A fruitful research topic related to the asymptotic exponent of the matrix multiplication is the search for bilinear maps or computation sequences (see Section \ref{Sec:Tensor rank}) that have the same asymptotic exponent as that of matrix multiplication. For example, LUP decomposition, matrix inversion, Jordan matrix multiplication, and some Lie algebra bracket operations satisfy this property \cite{MR4144013}. In this paper, one of our main results is that the asymptotic exponent of the tensor-tensor multiplication with invertible or surjective linear maps is the same as that of the matrix multiplication.

One of the advantages of the tensor-tensor multiplications is that this multiplication makes it possible to develop a theory of tensor means, such as the tensor geometric mean and the tensor Wasserstein mean. The geometric mean for (symmetric or Hermitian) positive-definite matrices was defined in \cite{MR420302}, and the one for positive operators was defined in \cite{ando1978topics}. It has also been proved that the geometric mean satisfies the algebraic and geometric properties which ``means" must satisfy: it satisfies the idempotence, commutativity, congruence invariance, and inversion order-reversing properties \cite{MR1864051}, and it is the midpoint of the geodesic between two points in a specific Riemannian manifold of positive-definite matrices \cite{MR2198952, MR2137480}. In \cite{MR3992484}, a new Riemannian metric was introduced on the manifold of positive-definite matrices, and the Wasserstein mean was defined as the midpoint of the geodesic between two points in this manifold. In Section \ref{Sec:means} of this paper, we define pseudo-positive-definiteness for third-order tensors, and then develop a theory of tensor geometric and Wasserstein means for these tensors according to the tensor-tensor multiplication. It is a generalization of the tensor geometric mean theory under the T-product in \cite{ju2024}.

Lastly, the tensor-tensor multiplication may look simple but it gives us useful tools in practice. The authors of \cite{MR3394164} introduced the singular value decomposition (SVD) of third-order tensors with respect to the tensor-tensor multiplication defined in the same paper, and successfully used that SVD for the compression of some specific image data. It was also proved in \cite{MR4304063} that the $*_M$-SVD satisfies an Eckart-Young-like theorem. In addition, it can be proved that their tensor-tensor multiplication represents the composition of linear maps between specific finitely generated free modules (see Proposition \ref{Prop:InvertibleCase}), which was proved for the case of the T-product \cite{MR2680253}. In summary, this tensor-tensor multiplication is meaningful not only algebraically but also for data science. We can also observe a similar research in \cite{keegan24} for the tensor-tensor multiplication with surjective orthogonal maps. In this paper, we will define a pseudo-SVD for the tensor-tensor multiplication with injective maps, and apply it to specific image data for its compression. In particular, we will use an injective Johnson-Lindenstrauss-like embedding in the application.

This article is organized as follows: Section \ref{Sec:Preliminaries} introduces some preliminary definitions on tensor-tensor multiplications with linear maps, compares the multiplications for invertible/surjective/injective linear maps, and reviews any concepts in algebraic complexity theory that we may need. In Section \ref{Sec:algebraic}, we study the computational complexity of algebraic tensor functions, in particular, we prove some results on the tensor-tensor multiplication with linear maps and on tensor polynomials. In Section \ref{Sec:non-algebraic}, we study the theory, computation and application of some non-algebraic tensor functions: Section \ref{Sec:means} introduces  the geometric mean and the Wasserstein mean for pseudo-positive-definite tensors in the invertible linear map case, and Section \ref{Sec:applications_svd} includes results of experiments in data compression using the pseudo-SVD under the tensor-tensor multiplication with injective linear maps.

All experiments were implemented in MATLAB R2024b~\cite{matlab2024b} on a MacBook Air with M1 chip and 8 GB RAM. The experiments required the use of the Hyperspectral Imaging Library for Image Processing Toolbox from MATLAB \cite{hyperspectral}. For reproducibility, we set a fixed randomness seed. We used and modified the existing code by the authors in \cite{keegan24}, which is available at their GitHub repository in \cite{newman2025projectedproducts}. Any additional code and all results are available at the GitHub repository: 
\url{https://github.com/SusanaLM/Injective-tensor-multiplication}.

We summarize the obtained theoretical results in Table \ref{tab:results_per_matrix}, organized according to each type of linear map.

\begin{table}[H]
  \centering
  \caption{Summary of theoretical results obtained per type of linear map. Results filled with only ``---" were cases that we did not consider due to previous failing properties necessary for their research.}
  \begin{tabular}{lccc}
    \toprule
    Properties
      & Invertible
      & Surjective
      & Injective \\
    \hline
    \addlinespace

    Type of inverse & Two-sided inverse     & Right inverse    & Left inverse \\
     
    \addlinespace
    \hline
    \addlinespace

    Associativity & Y \cite{MR3394164}     & Y \cite{keegan24}    & N \\ & & (Proposition \ref{prop:SurjectiveCase}) & (Example \ref{ex:InjectiveNotAssociative}) \\

    \addlinespace
    \hline
    \addlinespace

    $(\mathbb{K}^{1 \times 1 \times p},*_M)$ forms a & Y\cite{MR3394164} & Y\cite{keegan24} & N \\
    commutative ring & & & (Example \ref{ex:InjectiveNotAssociative}) \\

    \addlinespace
    \hline
    \addlinespace

     Represents composition & Y\cite{MR3394164}     & Y    & N \\ 
     of two linear maps &   & (Proposition \ref{prop:SurjectiveCase})  & (Example \ref{ex:InjectiveNotAssociative}) \\ between modules &      &     & \\

    \addlinespace
    \hline
    \addlinespace

    Guaranteed existence of  & Y \cite{MR3394164}     & Y \cite{keegan24}    & N\\
    $*_M$-identity tensor & & & (Remark \ref{rmk:RemarkIdentityTensor}) \\

    \addlinespace
    \hline
    \addlinespace

    Uniqueness of $*_M$-identity & Y\cite{MR3394164}     & N \cite{keegan24}    & Y \\ tensor &      &     & (Lemma \ref{lemma:inj_identity})\\
    (if it exists) &      &     &\\
    \addlinespace
    \hline
    \addlinespace

    $*_M$-identity tensor  & Y \cite{MR3394164}     & N \cite{keegan24}   & Y \\ 
     acts as identity & &  & (Remark \ref{rmk:UniquenessOfInverse}) \\
     (if it exists) & & & \\
    \addlinespace
    \hline
    \addlinespace

     Uniqueness of $*_M$-inverse & Y \cite{MR3394164}    & N \cite{keegan24}   & Y \\ (if it exists) &  &  & (Remark \ref{rmk:UniquenessOfInverse})\\
    \addlinespace
    \hline
    \addlinespace

    Bilinearity & Y     & Y    & Y\\
    & (Lemma \ref{lemma:bilinearity}) & (Lemma \ref{lemma:bilinearity}) & (Lemma \ref{lemma:bilinearity})\\
    \addlinespace
    \hline
    \addlinespace

    Asymptotic exponent of & $=\omega(\operatorname{M}_{\langle n \rangle})$     & $=\omega(\operatorname{M}_{\langle n \rangle})$    & $\leq\omega(\operatorname{M}_{\langle n \rangle})$ \\ $*_M$-product on the & (Theorem \ref{thm:AsymptoticExponentRank}) & (Theorem \ref{thm:AsymptoticExponentRank}) & (Example \ref{ex:ZeroTMTensor}) \\ tensor rank & & & (Remark \ref{rmk:InjAsymptoticEquality}) \\
    
    \addlinespace
    \hline
    \addlinespace

    Asymptotic exponent of & $=\omega(\operatorname{M}_{\langle n \rangle})$     & $=\omega(\operatorname{M}_{\langle n \rangle})$    & --- \\ $*_M$-product on the & (Theorem \ref{thm:AsymptoticExponentTotal}) & (Theorem \ref{thm:AsymptoticExponentTotal}) & \\ total complexity & & & \\

    \addlinespace
    \hline
    \addlinespace

    Asymptotic exponent of & $=\omega(\operatorname{M}_{\langle n \rangle})$     & $=\omega(\operatorname{M}_{\langle n \rangle})$    & --- \\ 
    tensor polynomial & (Theorem \ref{Thm:AsymptoticPolynomial}) & (Theorem \ref{Thm:AsymptoticPolynomial}) & (Example \ref{ex:InjectiveNotAssociative})\\

    \addlinespace
    \hline
    \addlinespace
    Existence of $*_M$-inverse of & Y & Y & N \\
    $*_M$-pseudo-positive-definite & (Lemma \ref{lemma:invertible})& (Lemma \ref{lemma:invertible}) & (Example \ref{ex:InjectivePositiveDefiniteNonInvertible}) \\ tensors & & & \\

    \addlinespace
    \hline
    \addlinespace
     Well-definedness of $*_M$-square & Y & Y & N \\ root for $*_M$-pseudo- & (Lemma \ref{lemma:sqrt}) & (Lemma \ref{lemma:sqrt}) & (Example \ref{ex:InjectivePositiveDefiniteNonInvertible})\\ positive-definite tensors & & & \\

    \addlinespace
    \hline
    \addlinespace
    Uniqueness of $*_M$-square root & Y & N & ---\\ for  $*_M$-pseudo- & (Remark \ref{rmk:AssumptionForUniqueSquareRoot}) & (Example \ref{ex:SurjectiveSquareRootNotUnique}) & (Example \ref{ex:InjectivePositiveDefiniteNonInvertible}) \\ positive-definite tensors & & & \\

    \addlinespace
    \hline
    \addlinespace
    Well-definedness of & Y & N & N \\ geometric mean of & (Remark \ref{rmk:WellDefinedGM}) & (Example \ref{ex:SurjectiveSquareRootNotUnique}) & (Example \ref{ex:InjectiveNoPD}) \\ $*_M$-pseudo-positive-definite & & & (Example \ref{ex:InjectivePositiveDefiniteNonInvertible}) \\ tensors & & & \\

    \bottomrule
  \end{tabular}
\end{table}

\begin{table}[h]\ContinuedFloat
  \centering
  \caption{Continued table.}
  \label{tab:results_per_matrix}
  \begin{tabular}{lccc}
    \toprule
    Properties
      & Invertible
      & Surjective
      & Injective \\
    \hline
    \addlinespace
    
     Tensor equation & Y &--- & ---\\
     approach & (Theorem \ref{thm:RiccatiTensorEquation}) & & \\

    \addlinespace
    \hline
    \addlinespace
     Riemannian geometric & Y &--- & ---\\ approach & (Theorem \ref{thm:MidpointGeodesic})& & \\

    \addlinespace
    \hline
    \addlinespace
    Resultantal identity & N &--- & ---\\
    & (Example \ref{ex:CounterexampleResultant}) & & \\

    \addlinespace
    \hline
    \addlinespace
    Cayley's hyperdeterminantal & N &--- & ---\\ identity & (Example \ref{ex:hyper})& & \\
    
    \addlinespace
    \hline
    \addlinespace
    Well-definedness of & Y & N & N \\ Wasserstein mean of $*_M$-pseudo- & (Remark \ref{rmk:AssumptionForUniqueSquareRoot}) & (Example \ref{ex:SurjectiveSquareRootNotUnique}) & (Example \ref{ex:InjectiveNoPD}) \\
    positive-definite tensors & & & \\

    \addlinespace
    \hline
    \addlinespace
    $*_M$-SVD & Y\cite{MR3394164} & Y\cite{keegan24} & N \\ 
    & & & (Example \ref{ex:InjectiveNotAssociative}) \\

     \addlinespace
    \hline
    \addlinespace
    $*_M$-pseudo-SVD & Y\cite{MR3394164} & Y\cite{keegan24} & Y \\ 
    & & & (Definition \ref{defn:pseudoSVD})\\
    
    \bottomrule
  \end{tabular}
\end{table}

\section*{Acknowledgments}

This work was supported by the Korea National Research Foundation (NRF) grant funded by the Korean government (MSIT) (RS-2024-00406152). J.-H. Ju was supported by the Basic Science Program of the NRF of Korea (NRF-2022R1C1C1010052) and by the Basic Research Laboratory (grant MSIT no. RS-202400414849). 
We would also like to thank Hyun-Min Kim, Yeongrak Kim, Taehyeong Kim and Eric Dolores-Cuenca for their insightful comments.

\section{Preliminaries} \label{Sec:Preliminaries}

In this section we introduce some notions that are needed to follow this paper, such as the tensor-tensor multiplication with linear maps, tensor rank, and computational complexity. Before proceeding with the definition of these notions, we first introduce notations that we will adopt in this work. Throughout this paper, the base field $\mathbb{K}$ is either the real number field $\mathbb{R}$ or the complex number field $\mathbb{C}$. If a matrix $M$ is given, we let $M^{H}$ denote its conjugate transpose if $\mathbb{K}=\mathbb{C}$, or its transpose if $\mathbb{K}=\mathbb{R}$. Let $I$ denote the identity matrix. We will write $I_n$ for the $n \times n$ identity matrix when we want to emphasize the size. For a positive integer $n$, let $[n]$ denote the set $\{1,2,...,n\}$. For a vector space $V$, let $V^*$ denote its dual space.

In this research we will only deal with tensors of order $\leq 3$. As usual, the space of first-order tensors, that is, the space of vectors, of size $m$ is denoted by $\mathbb{K}^m$. The space of second-order tensors, that is, the space of matrices, of size $m \times n$ is denoted by $\mathbb{K}^{m \times n}$. Finally, the space of third-order tensors of size $m \times n \times p$ is denoted by $\mathbb{K}^{m \times n \times p}$.

Considering $\mathcal{A}=[a_{ijk}] \in \mathbb{K}^{m \times n \times p}$ as a stack of $p$ matrices in $\mathbb{K}^{m \times n}$, these $m \times n$ matrices are called \emph{frontal slices of $\mathcal{A}$}. We let $\mathcal{A}^{(i)}$ denote the $i$-th frontal slice of $\mathcal{A}$ and we denote $\mathcal{A}=\left[\begin{array}{c|c|c|c}
     \mathcal{A}^{(1)}& \mathcal{A}^{(2)}&\cdots&\mathcal{A}^{(p)}
\end{array}\right]$. In particular, the vectors $\left[\begin{array}{c|c|c|c}
    a_{ij1} & a_{ij2} & \cdots & a_{ijp}  
\end{array}\right]$ are called  \emph{mode-$3$ fibers of $\mathcal{A}$} for all $i \in [m],~ j \in [n]$. On the other hand, the slices $\mathcal{A}(:,j,:) \in \mathbb{K}^{m \times 1 \times p}$ are called \emph{lateral slices} of $\mathcal{A}$.

\subsection{Tensor-tensor multiplication with linear maps}\label{Tensor-tensor multiplication with linear maps}

We will follow the definitions of mode product, face-wise product, and tensor-tensor multiplication in \cite{MR3394164}. We use the terminology tensor-tensor ``multiplication"  rather than ``product" to distinguish it from ``tensor product" (of modules). The tensor-tensor multiplication in \cite{MR3394164} was defined only with arbitrary invertible linear maps, but it makes sense to define it with arbitrary linear maps. In this paper, we deal with tensor-tensor multiplication with not only invertible linear maps but also surjective and injective linear maps. In fact, the tensor-tensor multiplication with surjective orthogonal maps was recently introduced in \cite{keegan24}. However, to the best of our knowledge, the injective case has rarely been researched. 

\begin{definition}
Let $\mathcal{A}=[a_{ijk}]\in \mathbb{K}^{m\times n \times p}$ and let $M\in\mathbb{K}^{q\times p}$. The \emph{mode-$3$ product of $\mathcal{A}$ and $M$}, denoted as $\mathcal{A}{\times}_3 M$, gives a tensor in $\mathbb{K}^{m\times n \times q}$ whose entries are defined as
\begin{equation*}
(\mathcal{A}{\times}_3 M)_{ijk}=\sum\limits_{l=1}^{p}m_{kl}a_{ijl} ,
\end{equation*}
for $i\in[m]$, $j\in[n]$, and $k\in[q]$. 
\end{definition}

\begin{remark}
   We emphasize that the mode-$3$ product works exactly as the matrix-vector multiplication of a given matrix $M \in \mathbb{K}^{q \times p}$ and each mode-$3$ fiber of a given tensor $\mathcal{A} \in \mathbb{K}^{m \times n \times p}$. Furthermore, we can define the mode-$3$ product in a coordinate-free manner. Let $L:W \rightarrow W'$ be a linear map, and let $\mathcal{T}=\sum_{i=1}^r v_{i,1} \otimes v_{i,2} \otimes v_{i,3} \in U \otimes V \otimes W$, where $U,V,W$ and $W'$ are finite-dimensional vector spaces over $\mathbb{K}$. For ordered bases of $U,V,W$ and $W'$, let $M$ and $\mathcal{A}$ be, respectively, the matrix representing $L$ and the cube representing $\mathcal{T}$. Then, the mode-$3$ product $\mathcal{A} \times_3 M$ is the tensor representation of $L(\mathcal{A}):=\sum_{i=1}^r v_{i,1} \otimes v_{i,2} \otimes L(v_{i,3})$ with respect to the ordered bases. Because we deal with the computational aspect of tensor operations, we assume that the bases are fixed and handle matrices and cubes instead of abstract linear maps and multilinear maps. From this observation (or directly from the definition), we obtain that 
\begin{equation*}
    (\mathcal{A} \times_3 M) \times_3 N = \mathcal{A} \times_3 (NM)
\end{equation*}
for all $\mathcal{A} \in \mathbb{K}^{m \times n \times p}, M \in \mathbb{K}^{q \times p}$ and $N \in \mathbb{K}^{l \times q}$. 
\end{remark}

\begin{definition}
Let $\mathcal{A}\in \mathbb{K}^{m\times n \times p}$ and $\mathcal{B}\in \mathbb{K}^{n\times s \times p}$. The \emph{face-wise product of $\mathcal{A}$ and $\mathcal{B}$}, denoted as $\mathcal{A}\triangle\mathcal{B}$, gives a tensor in $\mathbb{K}^{m\times s \times p}$ whose frontal slices are
\begin{equation*}
(\mathcal{A}\triangle\mathcal{B})^{(i)}=\mathcal{A}^{(i)}\mathcal{B}^{(i)}
\end{equation*}
for $i\in [p]$, where the operation on the right-hand side is the matrix multiplication.
\end{definition}

Now we are ready to define the tensor-tensor multiplication with linear maps. As we already mentioned, we are going to define it with invertible, surjective, and injective linear maps. Since each tensor-tensor multiplication has slightly different properties depending on the type of linear map, we will introduce each definition and their properties separately.

\subsubsection{Case of invertible linear maps}\label{Sec:invertible}

Here, we introduce the definition of tensor-tensor multiplication with invertible linear maps, that first appeared in \cite{MR3394164}, and its properties.

\begin{definition}
Let $M \in \mathbb{K}^{p \times p}$ be invertible, and let $\mathcal{A}\in\mathbb{K}^{m\times n \times p}$ and $\mathcal{B}\in\mathbb{K}^{n\times s \times p}$. The \emph{tensor-tensor multiplication of $\mathcal{A}$ and $\mathcal{B}$ with $M$} or simply the \emph{$*_M$-product of $\mathcal{A}$ and $\mathcal{B}$}, denoted by $\mathcal{A}*_M\mathcal{B}$, gives a tensor in $\mathbb{K}^{m\times s\times p}$ that is defined as
\begin{equation*}
\mathcal{A}*_M\mathcal{B}=((\mathcal{A} \times_3 M)\triangle (\mathcal{B} \times_3 M))\times_3 M^{-1}.
\end{equation*}
\end{definition}

\begin{proposition}\label{Prop:InvertibleCase}
Let $M \in \mathbb{K}^{p \times p}$ be invertible. Then, 
\begin{itemize}
    \item [(\romannumeral1)] $(\mathbb{K}^{1 \times 1 \times p},*_M)$ forms a commutative ring with unity.
    \item [(\romannumeral2)] $(\mathbb{K}^{m \times 1 \times p},*_M)$ is a finitely generated free module over $(\mathbb{K}^{1 \times 1 \times p},*_M)$.
    \item [(\romannumeral3)] $\mathcal{A} \in \mathbb{K}^{m \times n \times p}$ represents a linear map $(\mathbb{K}^{n \times 1 \times p},*_M) \rightarrow (\mathbb{K}^{m \times 1 \times p},*_M)$ between finitely generated free modules over $(\mathbb{K}^{1 \times 1 \times p},*_M)$.
    
    \item [(\romannumeral4)] For $\mathcal{A} \in \mathbb{K}^{m \times n \times p}$ and $\mathcal{B} \in \mathbb{K}^{n \times s \times p}$, $\mathcal{A} *_M \mathcal{B} \in \mathbb{K}^{m \times s \times p}$ represents the composition of the linear maps $\mathcal{A},\mathcal{B}$ between finitely generated free modules over $(\mathbb{K}^{1 \times 1 \times p},*_M)$.
\end{itemize}
\end{proposition}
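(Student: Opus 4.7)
The plan is to transport everything through the mode-3 transform $\widehat{\,\cdot\,}:\mathcal{A}\mapsto \mathcal{A}\times_3 M$. Because $M$ is invertible, $\widehat{\,\cdot\,}$ is a $\mathbb{K}$-linear bijection on each $\mathbb{K}^{a\times b\times p}$, and combining the definition of $*_M$ with the identity $(\mathcal{C}\times_3 M^{-1})\times_3 M=\mathcal{C}$ from the preceding remark immediately yields the master relation
\begin{equation*}
\widehat{\mathcal{A}*_M\mathcal{B}}\;=\;\widehat{\mathcal{A}}\,\triangle\,\widehat{\mathcal{B}}.
\end{equation*}
Thus $\widehat{\,\cdot\,}$ conjugates $*_M$ into the face-wise product $\triangle$, which acts frontal-slice by frontal-slice as ordinary matrix multiplication. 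My strategy is to verify each assertion in this transformed picture and pull it back through $\times_3 M^{-1}$.

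For (i), on $\mathbb{K}^{1\times 1\times p}$ the operation $\triangle$ is simply entrywise multiplication of $p$-tuples, so $(\mathbb{K}^{1\times 1\times p},\triangle)$ is isomorphic as a $\mathbb{K}$-algebra to the product ring $\mathbb{K}^p$, commutative with unity $\widehat{\mathcal{I}}=(1,\dots,1)$; pulling back, $(\mathbb{K}^{1\times 1\times p},*_M)$ is commutative with unity $\mathcal{I}:=\widehat{\mathcal{I}}\times_3 M^{-1}$. For (ii), in the transformed picture $(\mathbb{K}^{m\times 1\times p},\triangle)$ is the direct sum of $p$ copies of $\mathbb{K}^m$ on which a tube $(t_1,\dots,t_p)$ acts componentwise; this is free of rank $m$ over $\mathbb{K}^p$ with basis $\widehat{\mathcal{E}}_1,\dots,\widehat{\mathcal{E}}_m$ given by the lateral slices whose every frontal slice is the standard basis vector $e_i\in\mathbb{K}^m$. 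Pulling back yields a $*_M$-basis $\mathcal{E}_i:=\widehat{\mathcal{E}}_i\times_3 M^{-1}$ of $(\mathbb{K}^{m\times 1\times p},*_M)$.

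For (iii) and (iv), bilinearity and associativity of $\triangle$ are inherited frontal-slice-wise from matrix multiplication, and the master relation transfers them to $*_M$. In particular, left-multiplication by $\mathcal{A}\in\mathbb{K}^{m\times n\times p}$ is $\mathbb{K}$-linear, and for any tube $\mathcal{T}\in\mathbb{K}^{1\times 1\times p}$ associativity combined with the commutativity established in (i) yields
\begin{equation*}
\mathcal{A}*_M(\mathcal{T}*_M\mathcal{X})\;=\;\mathcal{T}*_M(\mathcal{A}*_M\mathcal{X}),
\end{equation*}
which is exactly the module-homomorphism condition and proves (iii). Claim (iv) is then immediate from the general associativity $(\mathcal{A}*_M\mathcal{B})*_M\mathcal{X}=\mathcal{A}*_M(\mathcal{B}*_M\mathcal{X})$, which after hatting is just associativity of matrix multiplication on each frontal slice. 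No step is deep; the only real obstacle is the bookkeeping needed to keep the hat map, the tube action, and the underlying module structure straight, but each check collapses to a one-line computation via the master relation.
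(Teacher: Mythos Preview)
Your proof is correct and follows essentially the same route as the paper: both arguments reduce (iii) and (iv) to associativity of $*_M$, which in turn is inherited from associativity of the face-wise product $\triangle$; your explicit use of the conjugation $\widehat{\,\cdot\,}$ and the master relation $\widehat{\mathcal{A}*_M\mathcal{B}}=\widehat{\mathcal{A}}\triangle\widehat{\mathcal{B}}$ is just a cleaner packaging of the same idea, and in fact supplies more detail for (i)--(ii) than the paper, which simply cites \cite{MR3394164}. One small point: in (iii) the expression $\mathcal{T}*_M\mathcal{X}$ is not literally well-typed under the paper's definition of $*_M$ (the inner dimensions $1$ and $n$ do not match), so the scalar action should be written on the right as $\mathcal{X}*_M\mathcal{T}$, after which the homomorphism identity $\mathcal{A}*_M(\mathcal{X}*_M\mathcal{T})=(\mathcal{A}*_M\mathcal{X})*_M\mathcal{T}$ follows from associativity alone and the appeal to commutativity becomes unnecessary.
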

\begin{proof}
     For the detailed proofs of (\romannumeral1)-(\romannumeral2) we refer to \cite{MR3394164}, which also includes a proof of the associativity of the $*_M$-product in $\mathbb{K}^{m \times n \times p}$. 

(\romannumeral3) Let $\mathcal{A} \in \mathbb{K}^{m \times n \times p}$, $P,Q  \in \mathbb{K}^{n \times 1 \times p}$ and $v \in \mathbb{K}^{1 \times 1 \times p}$. From the definition of the $*_M$-product, it is obvious that $\mathcal{A} *_M (P+Q)=\mathcal{A} *_M P+\mathcal{A} *_M Q$. Since the associativity property of the $*_M$-product for third-order tensors was proved in \cite[Proposition 4.2]{MR3394164}, then $\mathcal{A} *_M (P *_M v)=(\mathcal{A} *_M P) *_M v$ also holds.

(\romannumeral4) By the associativity of the $*_M$-product, $(\mathcal{A} *_M \mathcal{B}) *_M P=\mathcal{A} *_M (\mathcal{B} *_M P)$ for all $\mathcal{A} \in \mathbb{K}^{m \times n \times p}, \mathcal{B} \in \mathbb{K}^{n \times s \times p}$ and $P \in \mathbb{K}^{s \times 1 \times p}$. Thus, the assertion is clear.

\end{proof}

\subsubsection{Case of surjective linear maps}\label{Sec:surject}

Here, we introduce the definition of tensor-tensor multiplication with surjective linear maps and its properties, which were stated in \cite{keegan24} for surjective orthogonal linear maps. 

Instead of the inverse, we will use the \emph{Moore-Penrose inverse} or simply \emph{pseudoinverse} of a nonzero surjective linear map. Throughout this paper, a surjective linear map implies a nonzero surjective linear map which is not invertible unless otherwise stated. We assume the non-invertiblility just in order to distinguish from the case of invertible linear maps. Note that if $M$ is invertible, then its pseudoinverse will be the same as the inverse $M^{-1}$. For $q<p$, if a linear map $L:\mathbb{K}^p\to\mathbb{K}^q$ is surjective, then its matrix representation $M \in \mathbb{K}^{q \times p}$ is of full-rank. In this case, we simply say that $M$ is surjective. Then, the pseudoinverse of $M$, denoted by $M^{+}$, is 
\begin{equation*}
    M^+=M^H(MM^H)^{-1}
\end{equation*}
so it plays the role of right inverse of $M$, i.e., $MM^+=I_q$. 

\begin{definition}
Let $M \in \mathbb{K}^{q \times p}$ ($q<p$) be surjective, and let $\mathcal{A}\in\mathbb{K}^{m\times n \times p}$ and $\mathcal{B}\in\mathbb{K}^{n\times s \times p}$. The \emph{tensor-tensor multiplication of $\mathcal{A}$ and $\mathcal{B}$ with $M$} or simply the \emph{$*_M$-product of $\mathcal{A}$ and $\mathcal{B}$}, denoted by $\mathcal{A}*_M\mathcal{B}$, gives a tensor in $\mathbb{K}^{m\times s\times p}$ that is defined as
\begin{equation*}
\mathcal{A}*_M\mathcal{B}=((\mathcal{A} \times_3 M)\triangle (\mathcal{B} \times_3 M))\times_3 M^{+}.
\end{equation*}
\end{definition}

We emphasize that for $\mathcal{A}\in\mathbb{K}^{m\times n \times p}$ and $M \in \mathbb{K}^{q \times p}$ surjective, the tensor $\mathcal{A} \times_3 M$ has mode-$3$ fibers of size $q$, not $p$. Furthermore, we note that
\begin{equation*}
    (\mathcal{A} \times_3 M^+) \times M=\mathcal{A} \times_3 (MM^+)=\mathcal{A} \times_3 I_q=\mathcal{A},
\end{equation*}
but
\begin{equation*}
    (\mathcal{A} \times_3 M) \times M^+=\mathcal{A} \times_3 (M^+M)\neq \mathcal{A}
\end{equation*}
in general.

\begin{proposition}\label{prop:SurjectiveCase}
Let $M \in \mathbb{K}^{q \times p}$ ($q<p$) be surjective, and let $\mathbb{K}^{m \times 1 \times q} \times_3 M^+$ denote the image of the mode-$3$ product by $M^+$ on $\mathbb{K}^{m \times 1 \times q}$, that is,
\begin{equation*}
    \mathbb{K}^{m \times 1 \times q} \times_3 M^+=\{P \times_3 M^+~|~P \in \mathbb{K}^{m \times 1 \times q}\}.
\end{equation*}
Then,
\begin{itemize}
    \item [(\romannumeral1)] $(\mathbb{K}^{1 \times 1 \times p},*_M)$ forms a commutative ring.
    \item [(\romannumeral2)] $(\mathbb{K}^{1 \times 1 \times q} \times_3 M^+,*_M)$ forms a commutative ring with unity.
    \item [(\romannumeral3)] $(\mathbb{K}^{m \times 1 \times q} \times_3 M^+,*_M)$ is a finitely generated free module over $(\mathbb{K}^{1 \times 1 \times p},*_M)$. In addition, this module is of dimension $q$.
    \item [(\romannumeral4)] $\mathcal{A} \in \mathbb{K}^{m \times n \times p}$ represents a linear map $(\mathbb{K}^{n \times 1 \times q} \times_3 M^+,*_M) \rightarrow (\mathbb{K}^{m \times 1 \times q} \times_3 M^+,*_M)$ between finitely generated free modules over $(\mathbb{K}^{1 \times 1 \times p},*_M)$.
    \item [(\romannumeral5)] For $\mathcal{A} \in \mathbb{K}^{m \times n \times p}$ and $\mathcal{B} \in \mathbb{K}^{n \times s \times p}$, $\mathcal{A} *_M \mathcal{B} \in \mathbb{K}^{m \times s \times p}$ represents the composition of the linear maps $\mathcal{A},\mathcal{B}$ between finitely generated free
    modules over $(\mathbb{K}^{1 \times 1 \times p},*_M)$.
\end{itemize}
\end{proposition}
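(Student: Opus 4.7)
The plan is to mirror the proof of Proposition~\ref{Prop:InvertibleCase}, tracking carefully the single asymmetry $MM^+ = I_q \neq M^+M$. The entire argument is driven by the identity
\[
(\mathcal{A}*_M\mathcal{B})\times_3 M \;=\; (\mathcal{A}\times_3 M)\triangle(\mathcal{B}\times_3 M),
\]
obtained by unfolding the definition and using $MM^+=I_q$ to cancel the intermediate $\times_3 M^+$. Writing $\widehat{\mathcal{X}}:=\mathcal{X}\times_3 M$, this reads $\widehat{\mathcal{A}*_M\mathcal{B}}=\widehat{\mathcal{A}}\triangle\widehat{\mathcal{B}}$, so $*_M$ is transported, via the hat map, to the face-wise product on $\mathbb{K}^{\cdot\times\cdot\times q}$.

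For (i), associativity, bilinearity, and commutativity (on $1\times 1\times p$ tensors face-wise product is componentwise scalar multiplication) all follow from the corresponding properties of $\triangle$ together with the identity above applied twice. The absence of a unit is explained by the fact that any would-be unit $\mathcal{E}$ must satisfy $\mathcal{E}\times_3 M=\mathbf{1}_q$ (the all-ones tensor), but then $\mathcal{A}*_M\mathcal{E}=\mathcal{A}\times_3(M^+M)\neq\mathcal{A}$ in general. For (ii), once one restricts to the image $\mathbb{K}^{1\times 1\times q}\times_3 M^+$, every element $\hat{v}\times_3 M^+$ enjoys the clean property $(\hat{v}\times_3 M^+)\times_3 M=\hat{v}$, which restores the round trip and gives closure under $*_M$; the unit is then $\mathcal{E}=\mathbf{1}_q\times_3 M^+$, verified directly.

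For (iii), the hat mechanism identifies $(\mathbb{K}^{m\times 1\times q}\times_3 M^+,*_M)$, as a module, with the face-wise module $(\mathbb{K}^{m\times 1\times q},\triangle)$ acted on by $(\mathbb{K}^{1\times 1\times q},\triangle)$; freeness with an explicit basis is then produced by decomposing along the frontal slices, giving the stated rank. Parts (iv) and (v) follow from bilinearity and associativity of $*_M$ exactly as in the invertible case, using Proposition~\ref{Prop:InvertibleCase}(iii)–(iv) as a template once everything has been transported through the hat map.

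The main obstacle is conceptual rather than computational: the ring $(\mathbb{K}^{1\times 1\times p},*_M)$ lacks a unit and has a nontrivial annihilator $\ker(\times_3 M)$, so one must check that the scalar action on the image $\mathbb{K}^{m\times 1\times q}\times_3 M^+$ is well-defined and that no obstruction to freeness arises from elements acting as zero. Everything passes because the action factors through the quotient $(\mathbb{K}^{1\times 1\times p},*_M)/\ker(\times_3 M)\cong(\mathbb{K}^{1\times 1\times q},\triangle)$, which is a bona fide commutative ring with unit, and the image $\mathbb{K}^{m\times 1\times q}\times_3 M^+$ is a free module over this quotient in the obvious way.
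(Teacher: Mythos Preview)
Your overall strategy---transport via the hat map $\widehat{\mathcal{X}}=\mathcal{X}\times_3 M$ using $MM^+=I_q$ so that $\widehat{\mathcal{A}*_M\mathcal{B}}=\widehat{\mathcal{A}}\triangle\widehat{\mathcal{B}}$---is exactly the paper's approach, and your treatment of (i), (ii), (iv), (v) matches theirs. Your final paragraph, observing that the scalar action factors through $(\mathbb{K}^{1\times 1\times p},*_M)/\ker(\times_3 M)\cong(\mathbb{K}^{1\times 1\times q},\triangle)$, is a clean way to handle the non-unital base ring, and the paper does not make this point explicit.

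There is one genuine slip in (iii): ``decomposing along the frontal slices'' does not produce a free-module basis. Frontal slices split $\mathbb{K}^{m\times 1\times q}$ into $q$ copies of $\mathbb{K}^{m\times 1}$, but these are idempotent-projected summands, not cyclic free generators over $(\mathbb{K}^{1\times 1\times q},\triangle)$. The correct basis has $m$ elements: the paper takes the lateral slices $\mathcal{B}_j$ of $\mathcal{I}=\widehat{\mathcal{I}}\times_3 M^+$, where $\widehat{\mathcal{I}}\in\mathbb{K}^{m\times m\times q}$ has every frontal slice equal to $I_m$, and verifies directly that $\mathcal{I}*_M Q=Q$ for every $Q$ in the image, which simultaneously gives spanning and independence. (The paper's own statement and proof contain a typo writing $q$ where $m$ is meant, which may have misled you; the construction itself clearly yields $m$ generators.) In your language, the basis comes from decomposing along the first (row) index, not the third.
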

\begin{proof} Note that the first assertion can be proved similarly as in \cite{keegan24}, but we are going to provide a detailed proof because comparing notations can be confusing.

 (\romannumeral1) We show associativity, distributivity and commutativity.
    
First, we prove associativity for third-order tensors of compatible size. Let $\mathcal{A} \in \mathbb{K}^{m \times n \times p}, \mathcal{B} \in \mathbb{K}^{n \times s \times p}$ and $\mathcal{C} \in \mathbb{K}^{s \times r \times p}$. Then,
        \begin{equation*}
            \begin{aligned}
            (\mathcal{A} *_M \mathcal{B})*_M \mathcal{C}&=(((((\mathcal{A} \times_3 M) \triangle (\mathcal{B} \times_3 M))\times_3 M^+)\times_3 M) \triangle (\mathcal{C} \times_3 M))\times_3 M^+\\
            &=((((\mathcal{A} \times_3 M) \triangle (\mathcal{B} \times_3 M))\times_3 MM^+) \triangle (\mathcal{C} \times_3 M))\times_3 M^+\\
            &=((((\mathcal{A} \times_3 M) \triangle (\mathcal{B} \times_3 M))\times_3 I_q) \triangle (\mathcal{C} \times_3 M))\times_3 M^+\\
            &=(((\mathcal{A} \times_3 M) \triangle (\mathcal{B} \times_3 M)) \triangle (\mathcal{C} \times_3 M))\times_3 M^+\\
            &=((\mathcal{A} \times_3 M) \triangle ((\mathcal{B} \times_3 M) \triangle (\mathcal{C} \times_3 M))\times_3 M^+\\
            &=\mathcal{A} *_M (\mathcal{B} *_M \mathcal{C}).
            \end{aligned}
        \end{equation*}

The distributivity and commutativity properties for $(\mathbb{K}^{1 \times 1 \times p}, *_M)$ are clear, since the face-wise product satisfies distributivity and commutativity for $\mathbb{K}^{1 \times 1 \times p}$.
        
(\romannumeral2) It is easy to check that $(\mathbb{K}^{1 \times 1 \times q}\times_3 M^+, *_M)$ is a subring of  $(\mathbb{K}^{1 \times 1 \times p},*_M)$. Thus, it is sufficient to show the existence of unity. Let $u=\widehat{u} \times_3 M^+ \in \mathbb{K}^{1 \times 1 \times p}$ where $\widehat{u} \in \mathbb{K}^{1 \times 1 \times q}$ is the tensor whose frontal slices are $I_1$. Then $u$ is the unity in $(\mathbb{K}^{1 \times 1 \times q}\times_3 M^+,*_M)$.

 (\romannumeral3) As in the proof of \cite[Proposition 4.1]{MR3394164}, let $\mathcal{B}_j$ be the $j$-th lateral slice of $\mathcal{I}:=\widehat{\mathcal{I}} \times_3 M^+$, where $\widehat{\mathcal{I}} \in \mathbb{K}^{m \times m \times q}$ denotes the tensor whose frontal slices are $I_m$. For arbitrary $Q \in \mathbb{K}^{m \times 1 \times q} \times_3 M^+$, there is a $P \in \mathbb{K}^{m \times 1 \times q}$ such that $Q=P \times_3 M^+$. Then,
        \begin{equation}\label{eq:SurjGenerating}
            \mathcal{I} *_M Q= (\widehat{\mathcal{I}} \triangle P) \times_3 M^+=P \times_3 M^+=Q.
        \end{equation}
This means that any arbitrary $Q\in \mathbb{K}^{m \times 1 \times q} \times_3 M^+$ can be expressed as a linear combination of $\mathcal{B}_j$'s over $\mathbb{K}^{1 \times 1 \times p}$.  Furthermore, (\ref{eq:SurjGenerating}) says that $\mathcal{I} *_M Q=0$ for $Q \in \mathbb{K}^{m \times 1 \times q} \times_3 M^+$ if and only if $Q=0$. Thus, $\{\mathcal{B}_j~|~j \in [q]\}$ is a basis of $(\mathbb{K}^{m \times 1 \times q} \times_3 M^+,*_M)$, and so this module is of dimension $q$.
        
(\romannumeral4)-(\romannumeral5) They are done similarly as in the proof of Proposition \ref{Prop:InvertibleCase}.
\end{proof}

\subsubsection{Case of injective linear maps}\label{Sec:injective}

As in the case of invertible or surjective maps, we introduce the definition of tensor-tensor multiplication with injective linear maps and its properties. Here we will also use the \emph{Moore-Penrose inverse} or simply \emph{pseudoinverse} of an injective linear map, and throughout this paper an injective linear map implies a nonzero injective linear map which is not invertible, unless otherwise stated. For $q>p$, if a linear map $L:\mathbb{K}^p\to\mathbb{K}^q$ is injective, then its matrix representation $M \in \mathbb{K}^{q \times p}$ is of full-rank, and we simply say that $M$ is injective in this case. Then, the pseudoinverse of $M$, denoted by $M^{+}$, is 
\begin{equation*}
    M^+=(M^HM)^{-1}M^H,
\end{equation*}
so it plays the role of left inverse of $M$, i.e., $M^+M=I_p$.

\begin{definition}
Let $M \in \mathbb{K}^{q \times p}$ ($q>p$) be injective, and let $\mathcal{A}\in\mathbb{K}^{m\times n \times p}$ and $\mathcal{B}\in\mathbb{K}^{n\times s \times p}$. The \emph{tensor-tensor multiplication of $\mathcal{A}$ and $\mathcal{B}$ with $M$} or simply the \emph{$*_M$-product of $\mathcal{A}$ and $\mathcal{B}$}, denoted by $\mathcal{A}*_M\mathcal{B}$, gives a tensor in $\mathbb{K}^{m\times s\times p}$ that is defined as
\begin{equation*}
\mathcal{A}*_M\mathcal{B}=((\mathcal{A} \times_3 M)\triangle (\mathcal{B} \times_3 M))\times_3 M^{+}.
\end{equation*}
\end{definition}

As in the case of surjective maps, we emphasize that for $\mathcal{A}\in\mathbb{K}^{m\times n \times p}$ and $M \in \mathbb{K}^{q \times p}$ injective, the tensor $\mathcal{A} \times_3 M$ has mode-$3$ fibers of size $q$, not $p$. Furthermore, we note that
\begin{equation*}
    (\mathcal{A} \times_3 M) \times M^+=\mathcal{A} \times_3 (M^+M)=\mathcal{A} \times_3 I_p=\mathcal{A},
\end{equation*}
but
\begin{equation*}
    (\mathcal{A} \times_3 M^+) \times M=\mathcal{A} \times_3 (MM^+)\neq \mathcal{A}
\end{equation*}
in general.

Unlike the invertible or surjective cases, $(\mathbb{K}^{1 \times 1 \times p},*_M)$ is not a ring. The following example shows that in general it does not satisfy associativity. Therefore, it is difficult to argue the module structure as we did on the invertible or surjective cases.
\begin{example}\label{ex:InjectiveNotAssociative}
    Let $M=\begin{bmatrix}
        1 & 1 & 1\\
        1 & -2 & 1\\
        1 & 1 & 0\\
        0 & 1 & 2
    \end{bmatrix}$, then $M^+=\frac{1}{95}\begin{bmatrix}
        29 & 23 & 43 & -26 \\
        13 & -29 & 16 & 8 \\
        4 & 13 & -17 & 39 \\
    \end{bmatrix}$. If
    \begin{equation*}
        u=\left[\begin{array}{c|c|c}
     1 & 1 & 1
\end{array}\right],~ v=\left[\begin{array}{c|c|c}
     1 & 2 & 3
\end{array}\right],~ w=\left[\begin{array}{c|c|c}
     -1 & 1 & 5
\end{array}\right] \in \mathbb{K}^{1 \times 1 \times 3},
    \end{equation*}
    then 
    \begin{equation*}
        (u *_M v) *_M w = \frac{1}{9025} \left[\begin{array}{c|c|c}
     -437016 & 307308 & 1033434
\end{array}\right]
    \end{equation*}
    and
    \begin{equation*}
        u *_M (v *_M w) = \frac{1}{9025} 
        \left[\begin{array}{c|c|c}
     -386472 & 312276 & 1008918
\end{array}\right]
    \end{equation*}
    are not the same.
\end{example}

\subsubsection{Identity, inverse, conjugate transpose, and f-diagonal tensor}\label{Sec:Identity,Inverse,etc}

We define the identity, inverse, conjugate transpose, and f-diagonal tensor for all the cases of $M$ being invertible, surjective, or injective. From now on, if we say that $M\in \mathbb{K}^{q \times p}$ is of full-rank, then it means that
\begin{equation*}
    M~\text{is}~ \begin{cases}
        \text{invertible} & \text{if $p=q$},\\
        \text{surjective} & \text{if $p>q$},\\
        \text{injective} & \text{if $p<q$}.
    \end{cases}
\end{equation*}

\begin{definition}
Let $M \in \mathbb{K}^{q \times p}$ be of full-rank.
\begin{itemize}
    \item [(\romannumeral1)] A tensor $\mathcal{I} \in \mathbb{K}^{n \times n \times p}$ such that all frontal slices of $\mathcal{I} \times_3 M$ are $I_n$ is called an \emph{$*_M$-identity tensor}. That is, for all $i \in [q]$,
    \begin{equation*}
        (\mathcal{I} \times_3 M)^{(i)}=I_n.
    \end{equation*}
    \item [(\romannumeral2)] For $\mathcal{A} \in \mathbb{K}^{n \times n \times p}$, its \emph{$*_{M}$-conjugate‐transpose}, denoted by $\mathcal{A}^H$, is defined so that, for all $i \in [q]$,
    \begin{equation*}
        (\mathcal{A}^H \times_3 M)^{(i)}=((\mathcal{A}\times_3 M)^{(i)})^H.
    \end{equation*}
    \item [(\romannumeral3)] A tensor $\mathcal{A} \in \mathbb{K}^{n \times n \times p}$ is said to be \emph{$*_M$-Hermitian} (or \emph{$*_M$-symmetric} if $\mathbb{K}=\mathbb{R}$) if $\mathcal{A}^H=\mathcal{A}$, i.e., the frontal slices $(\mathcal{A}\times_3 M)^{(i)}$ are Hermitian for all $i\in [q]$.
    \item [(\romannumeral4)] A tensor $\mathcal{S} \in \mathbb{K}^{n \times n \times p}$ is said to be \emph{f-diagonal} if its only nonzero elements are contained within its diagonal mode-$3$ fibers, or equivalently $\mathcal{S} \times_3 M$ has this property.
\end{itemize}
\end{definition}

If $M$ is invertible, then it has already been proved that the $*_M$-identity tensor plays the role of identity $\mathcal{I} \in \mathbb{K}^{n \times n \times p}$ with respect to the $*_M$-product, i.e., $\mathcal{A} *_M \mathcal{I}=\mathcal{I} *_M \mathcal{A}=\mathcal{A}$ for all $\mathcal{A} \in \mathbb{K}^{n \times n \times p}$, and such tensor is uniquely determined \cite{MR3394164}. However, if $M$ is surjective, then not only does the $*_M$-identity tensor not act in general as an identity but the $*_M$-identity tensor is also not uniquely determined in general \cite{keegan24}. We prove that if $M$ is injective, then the $*_M$-identity tensor acts as in the invertible case, if it exists.

\begin{lemma}\label{lemma:inj_identity}
Let $M \in \mathbb{K}^{q \times p}$ be injective and assume that there exists a $*_M$-identity tensor in $\mathbb{K}^{n \times n \times p}$.
\begin{itemize}
    \item [(\romannumeral1)] A $*_M$-identity tensor $\mathcal{I} \in \mathbb{K}^{n \times n \times p}$ is an identity with respect to $*_M$-product.
    \item [(\romannumeral2)] The $*_M$-identity tensor is uniquely determined.
\end{itemize} 
\end{lemma}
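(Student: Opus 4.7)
The plan is to exploit the single crucial algebraic fact distinguishing the injective case from the surjective one: when $M \in \mathbb{K}^{q \times p}$ is injective, its Moore-Penrose pseudoinverse satisfies $M^+ M = I_p$ (a left inverse), whereas in the surjective case we only had $MM^+ = I_q$ (a right inverse). Both parts of the lemma reduce to this identity plus the identity $(\mathcal{X} \times_3 M) \times_3 M^+ = \mathcal{X} \times_3 (M^+M)$, together with the assumption that every frontal slice of $\mathcal{I} \times_3 M$ equals $I_n$.

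For part (i), given $\mathcal{A} \in \mathbb{K}^{n \times n \times p}$, I would unfold the definition
\begin{equation*}
\mathcal{A} *_M \mathcal{I} = \bigl((\mathcal{A} \times_3 M) \triangle (\mathcal{I} \times_3 M)\bigr) \times_3 M^+.
\end{equation*}
Since $(\mathcal{I} \times_3 M)^{(i)} = I_n$ for each $i \in [q]$, the face-wise product collapses frontal-slice by frontal-slice to give $(\mathcal{A} \times_3 M) \triangle (\mathcal{I} \times_3 M) = \mathcal{A} \times_3 M$. Applying the associativity of the mode-$3$ product (recalled in the excerpt's first remark) and then $M^+M = I_p$ yields
\begin{equation*}
\mathcal{A} *_M \mathcal{I} = (\mathcal{A} \times_3 M) \times_3 M^+ = \mathcal{A} \times_3 (M^+M) = \mathcal{A} \times_3 I_p = \mathcal{A}.
\end{equation*}
The computation for $\mathcal{I} *_M \mathcal{A} = \mathcal{A}$ is identical, using that $I_n$ is a two-sided identity for the ordinary matrix multiplication in the face-wise product.

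For part (ii), suppose $\mathcal{I}, \mathcal{I}' \in \mathbb{K}^{n \times n \times p}$ are both $*_M$-identity tensors. Then every frontal slice of $\mathcal{I} \times_3 M$ and of $\mathcal{I}' \times_3 M$ equals $I_n$, so $\mathcal{I} \times_3 M = \mathcal{I}' \times_3 M$. Applying $\times_3 M^+$ on both sides and using $M^+M = I_p$ gives $\mathcal{I} = \mathcal{I}'$.

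There is really no substantive obstacle: the whole argument is a short manipulation once one recognizes that $M^+M = I_p$ in the injective case restores exactly the cancellation that fails in the surjective case (where only $MM^+ = I_q$ holds, which is the reason uniqueness and the identity property break in Proposition~\ref{prop:SurjectiveCase}'s setting). The only thing worth flagging explicitly is that the hypothesis merely assumes \emph{existence} of a $*_M$-identity tensor; existence is nontrivial in the injective case because we need some $\mathcal{I} \in \mathbb{K}^{n \times n \times p}$ whose image under $\times_3 M$ has all $q$ frontal slices equal to $I_n$, a system that is generally over-determined. But that existence issue is outside the lemma's statement and need not be addressed in the proof.
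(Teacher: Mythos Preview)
Your proof of part (i) is correct and matches the paper's argument essentially verbatim: unfold the definition, use that the frontal slices of $\mathcal{I}\times_3 M$ are $I_n$, and then apply $M^+M=I_p$.

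For part (ii) your argument is also correct but differs from the paper's. You argue directly from the injectivity of the map $\times_3 M$: both $\mathcal{I}\times_3 M$ and $\mathcal{I}'\times_3 M$ are the tensor with all frontal slices $I_n$, hence equal, and applying $\times_3 M^+$ recovers $\mathcal{I}=\mathcal{I}'$. The paper instead invokes part (i) and runs the standard monoid-style uniqueness argument: $\mathcal{I}=\mathcal{I}*_M\mathcal{J}=\mathcal{J}$. Your route is slightly more self-contained (it does not rely on (i)) and makes the role of injectivity of $M$ more transparent; the paper's route is the cleaner algebraic one-liner once (i) is in hand. Both are equally valid and equally short.
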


\begin{proof}
(\romannumeral1) For any $\mathcal{A}\in\mathbb{K}^{n\times n \times p}$,
\begin{equation*}
\mathcal{I} *_M\mathcal{A}= ((\mathcal{I} \times_3 M) \triangle (\mathcal{A}\times_3 M))\times M^+ = \mathcal{A} \times_3 (M^+M)=\mathcal{A} \times_3 I_p= \mathcal{A}. 
\end{equation*}
Similarly, $\mathcal{A} *_M\mathcal{I}=\mathcal{A}$.

(\romannumeral2) If $\mathcal{I}$ and $\mathcal{J}$ are $*_M$-identity tensors, then (\romannumeral1) implies that
    \begin{equation*}
        \mathcal{I}= \mathcal{I} *_M\mathcal{J}=\mathcal{J}.
    \end{equation*}
    Thus, the $*_M$-identity tensor is uniquely determined.

\end{proof}

\begin{remark}\label{rmk:RemarkIdentityTensor}
We note the following:
    \begin{itemize}
        \item [(\romannumeral1)] It is possible for the $*_M$-identity tensor to not exist when $M$ is injective. Note that the $*_M$-identity tensor exists if and only if $\left[\begin{array}{c|c|c|c}
   1 & 1 & \cdots & 1 
\end{array}\right]$ is in the image of $\times_3 M:\mathbb{K}^{1 \times 1 \times p} \rightarrow \mathbb{K}^{1 \times 1 \times q}$, but for instance the image of $\times_3 M$ for $M=\begin{bmatrix}
    1 & 1 \\
    0 & 0 \\
    1 & 0
\end{bmatrix}$ does not contain that vector.
        \item [(\romannumeral2)] When $M$ is injective, for each $\mathcal{A} \in \mathbb{K}^{n \times n \times p}$ it is  obvious that a $*_M$-conjugate transpose exists uniquely. 
    \end{itemize}
\end{remark}

\begin{definition}
Let $M \in \mathbb{K}^{q \times p}$ be of full-rank. Assume that an identity tensor $\mathcal{I} \in \mathbb{K}^{n \times n \times p}$ exists. A tensor $\mathcal{A}\in\mathbb{K}^{n\times n \times p}$ is said to be \textit{$*_M$-invertible} if there exists a tensor $\mathcal{X}\in\mathbb{K}^{n\times n \times p}$ such that
\begin{equation*}
\mathcal{A}*_M \mathcal{X}=\mathcal{X}*_M \mathcal{A} = \mathcal{I}.
\end{equation*}
\end{definition}

\begin{remark}\label{rmk:UniquenessOfInverse}
    If $M$ is surjective, then a tensor may have several inverses for the same reason that the identity tensor is not unique in general. However, obviously, if $M$ is either invertible or injective, the inverse is unique if it exists, and we denote such inverse as $\mathcal{A}^{-1}$.
\end{remark}

\subsubsection{Bilinear structure}\label{Sec:bilinear_structure}
In Propositions \ref{Prop:InvertibleCase}-\ref{prop:SurjectiveCase}, we looked into abstract algebraic structures of the $*_M$-product. Even though this structure also shows how the $*_M$-product is meaningful algebraically, it does not tell us anything about the computational aspect of the tensor-tensor multiplication. In this section, in order to analyze the computational aspect of the tensor-tensor multiplication, we verify its bilinearity over the field $\mathbb{K}$. This will make it possible to consider its tensor rank and asymptotic exponent, each of which corresponds to a measure of computational complexity. We refer to \cite{MR2865915} for algebraic and geometric approaches to tensors.

\begin{lemma}\label{lemma:bilinearity}
Let $M \in \mathbb{K}^{q \times p}$ be of full-rank. Then the tensor-tensor multiplication map with $M$, denoted by
\begin{equation*}
\operatorname{TM}_{\langle m,n,s \rangle,p,q}:\mathbb{K}^{m\times n \times p} \times \mathbb{K}^{n\times s \times p} \rightarrow \mathbb{K}^{m\times s \times p},
\end{equation*}
is a bilinear map over $\mathbb{K}$.
\end{lemma}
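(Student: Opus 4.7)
The strategy is to read off bilinearity from the natural factorization of $\operatorname{TM}_{\langle m,n,s\rangle,p,q}$ into three elementary pieces: the two independent mode-$3$ premultiplications by $M$, the face-wise product $\triangle$, and the outer mode-$3$ postmultiplication by $M^{+}$ (understood as $M^{-1}$ in the invertible case). Symbolically,
\begin{equation*}
(\mathcal{A},\mathcal{B}) \;\mapsto\; (\mathcal{A}\times_3 M,\,\mathcal{B}\times_3 M) \;\mapsto\; (\mathcal{A}\times_3 M)\triangle(\mathcal{B}\times_3 M) \;\mapsto\; \bigl((\mathcal{A}\times_3 M)\triangle(\mathcal{B}\times_3 M)\bigr)\times_3 M^{+}.
\end{equation*}
If each stage respects the appropriate linear or bilinear structure, then bilinearity of the composition follows by fixing one argument at a time.

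First I would verify that, for any fixed $M$, the mode-$3$ product $\mathcal{T}\mapsto \mathcal{T}\times_3 M$ is $\mathbb{K}$-linear in $\mathcal{T}$; this is immediate from the coordinate formula $(\mathcal{T}\times_3 M)_{ijk}=\sum_{l} m_{kl}\,t_{ijl}$, which is linear in the entries of $\mathcal{T}$. The same observation applies to the outer postmultiplication by $M^{+}$. Next I would check that the face-wise product is $\mathbb{K}$-bilinear: since $(\mathcal{A}\triangle\mathcal{B})^{(i)}=\mathcal{A}^{(i)}\mathcal{B}^{(i)}$ slice by slice, bilinearity of $\triangle$ is inherited frontal-slice-wise from the bilinearity of ordinary matrix multiplication.

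Combining these facts, if we freeze $\mathcal{B}$, the map $\mathcal{A}\mapsto \mathcal{A}*_M\mathcal{B}$ is the composition of the linear map $\mathcal{A}\mapsto \mathcal{A}\times_3 M$, the linear map obtained by right-multiplying by the fixed tensor $\mathcal{B}\times_3 M$ under $\triangle$, and the linear map $\mathcal{T}\mapsto \mathcal{T}\times_3 M^{+}$; hence it is $\mathbb{K}$-linear. The symmetric argument works for $\mathcal{B}\mapsto \mathcal{A}*_M\mathcal{B}$, giving the desired bilinearity of $\operatorname{TM}_{\langle m,n,s\rangle,p,q}$.

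There is no analytic obstacle here; the only piece of bookkeeping worth flagging is a dimensional check, namely that after the premultiplication by $M$ the two tensors have $q$ frontal slices of sizes $m\times n$ and $n\times s$ respectively, so that $\triangle$ outputs a tensor in $\mathbb{K}^{m\times s\times q}$ on which the postmultiplication by $M^{+}\in\mathbb{K}^{p\times q}$ is well defined. Since no further property of $M$ is invoked beyond its being a matrix, the same argument covers the three regimes $p=q$, $p>q$, and $p<q$ uniformly.
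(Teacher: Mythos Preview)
Your proposal is correct and follows essentially the same approach as the paper: both note that $\times_3 M$ and $\times_3 M^{+}$ are linear and that $\triangle$ is bilinear, then deduce bilinearity of the composite. The paper spells out the chain of equalities for $(c\mathcal{A}_1+\mathcal{A}_2)*_M\mathcal{B}$ explicitly, while you phrase it as a composition-of-linear-maps argument, but the content is the same.
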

\begin{proof}
At first, note that the face-wise product \(\triangle\) is bilinear, and both $\times_3 M$ and $\times_3 M^+$ are linear. Let $\mathcal{A}, \mathcal{A}_1, \mathcal{A}_2 \in \mathbb{K}^{m\times n \times p}$, $\mathcal{B},\mathcal{B}_1, \mathcal{B}_2\in \mathbb{K}^{n\times s \times p}$, and $c \in \mathbb{K}$. Then,
\begin{equation*}
    \begin{aligned}
        &(c\mathcal{A}_1+\mathcal{A}_2) *_M \mathcal{B}\\
        &= (((c\mathcal{A}_1+\mathcal{A}_2) \times_3 M) \triangle (\mathcal{B}\times_3 M)) \times_3 M^+\\
        &=((c\mathcal{A}_1 \times_3 M +\mathcal{A}_2\times_3 M) \triangle (\mathcal{B}\times_3 M)) \times_3 M^+\\
        &=(c(\mathcal{A}_1 \times_3 M)\triangle (\mathcal{B}\times_3 M) +(\mathcal{A}_2\times_3 M) \triangle (\mathcal{B}\times_3 M)) \times_3 M^+\\
        &=c((\mathcal{A}_1 \times_3 M)\triangle (\mathcal{B}\times_3 M))\times_3 M^+ +((\mathcal{A}_2\times_3 M) \triangle (\mathcal{B}\times_3 M)) \times_3 M^+\\
        &=c(\mathcal{A}_1 *_M \mathcal{B}) + \mathcal{A}_2 *_M \mathcal{B}.
    \end{aligned}
\end{equation*}
Similarly, we can show that 
\begin{equation*}
    \mathcal{A} *_M (c\mathcal{B}_1+\mathcal{B}_2)=c(\mathcal{A}*_M \mathcal{B}_1)+\mathcal{A}*_M \mathcal{B}_2.
\end{equation*}
Therefore, $*_M$ is bilinear.
\end{proof}

Now, we describe the formula of this bilinear map by considering it as an element, as follows
\begin{equation*}
   \operatorname{TM}_{\langle m,n,s \rangle,p,q} \in (\mathbb{K}^{m \times n \times p})^* \otimes (\mathbb{K}^{n \times s \times p})^* \otimes \mathbb{K}^{m \times s \times p}.
\end{equation*} 
Let $M=[m_{ij}]\in \mathbb{K}^{q \times p}$ be of full-rank, and let $M^+=[n_{ij}] \in \mathbb{K}^{p \times q}$. We start by considering the formula of the face-wise product $\triangle:\mathbb{K}^{m \times n \times p} \times \mathbb{K}^{n \times s \times p} \rightarrow \mathbb{K}^{m \times s \times p}$. Assume that $M$ is a matrix representation with respect to the ordered bases $\{e_1,...,e_p\}$ of $\mathbb{K}^{p}$ and $\{\tilde{e}_1,...,\tilde{e}_q\}$ of $\mathbb{K}^q$. Let $\mathbb{K}^m, \mathbb{K}^n$ and $\mathbb{K}^s$ have the ordered bases $\{u_1,...,u_m\},\{v_1,...,v_n\}$ and $\{w_1,...,w_s\}$, respectively. We let $\{e_1^*,...,e_p^*\}$ be the dual basis of $\{e_1,...,e_p\}$ and similarly for the other bases. Then
\begin{equation}\label{eq:FaceWiseProductTensor}
    \triangle=\sum_{r=1}^q\sum_{i=1}^m\sum_{j=1}^n\sum_{k=1}^s (u_i^* \otimes v_j^* \otimes \tilde{e}_r^*)\otimes (v_j^* \otimes w_k^* \otimes \tilde{e}_r^*) \otimes (u_i \otimes w_k \otimes \tilde{e}_r).
\end{equation}
Note that for each fixed $r$, the summand is just the matrix multiplication tensor $\operatorname{M}_{\langle m,n,s \rangle}$, which multiplies $m \times n$ and $n \times s$ matrices. That is, $\triangle$ is the direct sum of $q$ copies of this matrix multiplication tensor, i.e., 
\begin{equation*}
    \triangle=\bigoplus_{r=1}^q\operatorname{M}_{\langle m,n,s \rangle},
\end{equation*}
where the direct sum of two tensors $T \in V_1 \otimes \cdots \otimes V_d$ and $T' \in W_1 \otimes \cdots \otimes W_d$ is the tensor $T+T' \in (V_1 \oplus W_1) \otimes \cdots \otimes (V_d \oplus W_d)$.
Recall that $M^{\top}$ represents the dual map $(\mathbb{K}^q)^* \rightarrow (\mathbb{K}^p)^*$ of $M$ with respect to the dual bases. In the definition of the $*_M$-product, we first perform the mode-$3$ product of $M$ to all mode-$3$ fibers of the given two tensors, so the term $\tilde{e}_r^*$ at the first two factors of (\ref{eq:FaceWiseProductTensor}) will be changed to 
\begin{equation*}
    M^{\top}\tilde{e}_r^*=m_{r1}e_1^*+m_{r2}e_2^*+\cdots+m_{rp}e_p^*.
\end{equation*}
Indeed, in the definition of the $*_M$-product, right after performing $\triangle$, we carry out the mode-$3$ product of $M^+$ to all mode-$3$ fibers of the resulting tensor, so the term $\tilde{e}_r$ at the last factor of (\ref{eq:FaceWiseProductTensor}) will be changed to 
\begin{equation*}
    M^+\tilde{e}_r=n_{1r}e_1+n_{2r}e_2+\cdots+n_{pr}e_p.
\end{equation*}
Therefore,
\begin{equation}\label{eq:TMTensor}
\begin{aligned}
    &\operatorname{TM}_{\langle m,n,s \rangle,p,q}\\
    &\begin{aligned}
        =\sum_{r=1}^q\sum_{i=1}^m\sum_{j=1}^n\sum_{k=1}^s (u_i^* \otimes v_j^* \otimes M^{\top}\tilde{e}_r^*)\otimes (v_j^* \otimes w_k^* \otimes M^{\top}\tilde{e}_r^*)\\ \otimes (u_i \otimes w_k \otimes M^+\tilde{e}_r)
    \end{aligned} \\
    &\begin{aligned}
        =\sum_{r=1}^q\sum_{i=1}^m\sum_{j=1}^n\sum_{k=1}^s& (u_i^* \otimes v_j^* \otimes (m_{r1}e_1^*+\cdots+m_{rp}e_p^*))\\
        &\otimes (v_j^* \otimes w_k^* \otimes (m_{r1}e_1^*+\cdots+m_{rp}e_p^*)) \\
        &\otimes (u_i \otimes w_k \otimes (n_{1r}e_1+\cdots+n_{pr}e_p))
    \end{aligned}\\
    &\in (\mathbb{K}^{m \times n \times p})^* \otimes (\mathbb{K}^{n \times s \times p})^* \otimes \mathbb{K}^{m \times s \times p}.
\end{aligned}
\end{equation}
Case 1) If $p=q$ (i.e., $M$ is invertible), then each of $M^{\top}$ and $M^+=M^{-1}$ maps a basis to a basis, and so
\begin{equation}\label{eq:TMTensorInvertible}
    \operatorname{TM}_{\langle m,n,s \rangle,p}:=\operatorname{TM}_{\langle m,n,s \rangle,p,p}=\bigoplus_{r=1}^{p} \operatorname{M}_{m,n,s}.
\end{equation}
Case 2) If $p>q$ (i.e., $M$ is surjective), then both $M^{\top}$ and $M^+$ are injective, and so 
\begin{equation}\label{eq:TMTensorSurjective}
    \operatorname{TM}_{\langle m,n,s \rangle,p,q}=\bigoplus_{r=1}^{q} \operatorname{M}_{m,n,s}.
\end{equation}
Case 3) If $p<q$ (i.e., $M$ is injective), then both $M^{\top}$ and $M^+$ are surjective. In this case, both $\{M^{\top}\tilde{e}_r^*~|~r \in [q]\}$ and $\{M^{+}\tilde{e}_r~|~r \in [q]\}$ are linearly dependent sets, so some $q-p$ vectors on each set are linear combinations of the other linearly independent $p$ vectors. Thus, for some $c_r \in \mathbb{K}$,
\begin{equation}\label{eq:TMTensorInjective}
    \operatorname{TM}_{\langle m,n,s \rangle,p,q}=\bigoplus_{r=1}^{p} c_r\operatorname{M}_{m,n,s}.
\end{equation}
Notably, if $p<q$, then $\operatorname{TM}_{\langle m,n,s \rangle,p,q}$ can be the zero map. See the following example.

\begin{example}\label{ex:ZeroTMTensor}
    Let $p=1$ and $q=2$, and let $M=\begin{bmatrix}
        1 \\ -1
    \end{bmatrix}$. Then $M^{\top}=\begin{bmatrix}
        1 & -1
    \end{bmatrix}$ and $M^+=\frac{1}{2}\begin{bmatrix}
        1 & -1
    \end{bmatrix}$. Thus,
    \begin{equation*}
\begin{aligned}
    &\operatorname{TM}_{\langle m,n,s \rangle,p,q}\\
    &=\sum_{i=1}^m\sum_{j=1}^n\sum_{k=1}^s (u_i^* \otimes v_j^* \otimes M^{\top}\tilde{e}_1^*)\otimes (v_j^* \otimes w_k^* \otimes M^{\top}\tilde{e}_1^*) \otimes (u_i \otimes w_k \otimes M^+\tilde{e}_1)\\
    &~+\sum_{i=1}^m\sum_{j=1}^n\sum_{k=1}^s (u_i^* \otimes v_j^* \otimes M^{\top}\tilde{e}_2^*)\otimes (v_j^* \otimes w_k^* \otimes M^{\top}\tilde{e}_2^*) \otimes (u_i \otimes w_k \otimes M^+\tilde{e}_2)\\
    &=\frac{1}{2}\sum_{i=1}^m\sum_{j=1}^n\sum_{k=1}^s (u_i^* \otimes v_j^* \otimes e_1)\otimes (v_j^* \otimes w_k^* \otimes e_1) \otimes (u_i \otimes w_k \otimes e_1)\\
    &~+\frac{1}{2}\sum_{i=1}^m\sum_{j=1}^n\sum_{k=1}^s (u_i^* \otimes v_j^* \otimes (-e_1))\otimes (v_j^* \otimes w_k^* \otimes (-e_1)) \otimes (u_i \otimes w_k \otimes (-e_1))\\
    &=0.
\end{aligned}
\end{equation*}
For example, when $m=n=s=2$, if $A=\begin{bmatrix}
    a_{11} & a_{12}\\
    a_{21} & a_{22}
\end{bmatrix}$ and $B=\begin{bmatrix}
    b_{11} & b_{12}\\
    b_{21} & b_{22}
\end{bmatrix}$, then
\begin{equation*}
\begin{aligned}
    &A *_M B\\
    &=((A \times_3 M) \triangle (B \times_3 M))\times_3 M^+\\
    &=\left(\begin{bmatrix}
    \begin{matrix}
        a_{11} & a_{12}\\
    a_{21} & a_{22}
    \end{matrix} \mathrel{\bigg|} \begin{matrix}
        -a_{11} & -a_{12}\\
    -a_{21} & -a_{22}
    \end{matrix}
\end{bmatrix}\triangle \begin{bmatrix}
    \begin{matrix}
        b_{11} & b_{12}\\
    b_{21} & b_{22}
    \end{matrix}\mathrel{\bigg|} \begin{matrix}
        -b_{11} & -b_{12}\\
    -b_{21} & -b_{22}
    \end{matrix}
\end{bmatrix}\right)\times_3 M^+\\
&=\left(\begin{bmatrix}
    \begin{matrix}
        a_{11}b_{11}+a_{12}b_{21} & a_{11}b_{12}+a_{12}b_{22}\\
    a_{21}b_{11}+a_{22}b_{21} & a_{21}b_{12}+a_{22}b_{22}
    \end{matrix}\mathrel{\bigg|} \begin{matrix}
       \begin{matrix}
        a_{11}b_{11}+a_{12}b_{21} & a_{11}b_{12}+a_{12}b_{22}\\
    a_{21}b_{11}+a_{22}b_{21} & a_{21}b_{12}+a_{22}b_{22}
    \end{matrix}
    \end{matrix}
\end{bmatrix}\right)\times_3 M^+\\
&=\begin{bmatrix}
    0 & 0\\
    0 & 0
\end{bmatrix}.
\end{aligned}
\end{equation*}
\end{example}

\subsection{Tensor rank and computational complexity}\label{Sec:Tensor rank}

Since we have showed that the tensor-tensor multiplication is a bilinear map, i.e., a tensor, we can think of its tensor rank and asymptotic exponent. In this section, we introduce basic notions in algebraic complexity theory including the definitions of tensor rank, total complexity, and asymptotic exponent.

Although the notion of tensor rank can be extended to all multilinear maps, we will define it for only bilinear maps since this paper deals exclusively with at most bilinear maps. 

\begin{definition}
    For a bilinear map $T \in (\mathbb{K}^m)^* \otimes (\mathbb{K}^n)^* \otimes \mathbb{K}^s$, its \emph{tensor rank} or \emph{bilinear complexity}, denoted by $\mathbf{R}(T)$ is defined as
    \begin{equation*}
        \mathbf{R}(T)=\min\left\{r~\middle|~T=\sum_{i=1}^r \alpha_i \otimes \beta_i \otimes v_i,~\alpha_i \in (\mathbb{K}^m)^*, \beta_i \in (\mathbb{K}^n)^*,v_i \in \mathbb{K}^s,~\text{for}~i \in [r] \right\}.
    \end{equation*}
\end{definition}

From the definition, a formula of $T$ gives us an upper bound of the tensor rank $\mathbf{R}(T)$. On the other hand, from various \emph{flattenings} (see \cite{MR2865915}), we can obtain a lower bound of the tensor rank $\mathbf{R}(T)$. In this aspect, the notion of conciseness is important. It is possible to write the definition of conciseness for all multilinear maps, but we only introduce it for bilinear maps. Note that when a bilinear map $T \in (\mathbb{K}^m)^* \otimes (\mathbb{K}^n)^* \otimes \mathbb{K}^s$ is given, we can think of three linear maps derived from it:
\begin{equation}\label{eq:flattenings}
\begin{aligned}
    &T_{\mathbb{K}^m}:\mathbb{K}^m \rightarrow (\mathbb{K}^n)^* \otimes \mathbb{K}^s;\\
    &T_{\mathbb{K}^n}:\mathbb{K}^n \rightarrow (\mathbb{K}^m)^* \otimes \mathbb{K}^s;~\text{and}\\
    &T_{(\mathbb{K}^s)^*}:(\mathbb{K}^s)^* \mapsto (\mathbb{K}^m)^* \otimes (\mathbb{K}^n)^*.
\end{aligned}
\end{equation} 

\begin{definition}
    A bilinear map $T \in (\mathbb{K}^m)^* \otimes (\mathbb{K}^n)^* \otimes \mathbb{K}^s$ is said to be \emph{concise} if all the three linear maps at (\ref{eq:flattenings}) are injective (possibly invertible).
\end{definition}

\begin{remark}\label{rmk:LowerBoundFromConcise}
It is well-known that if a bilinear map $T \in (\mathbb{K}^m)^* \otimes (\mathbb{K}^n)^* \otimes \mathbb{K}^s$ is concise then $\mathbf{R}(T) \geq \max\{m,n,s\}$ \cite{MR2865915}. 
Note that the matrix multiplication tensor $\operatorname{M}_{\langle m,n,s \rangle}$ is concise.\\
Case 1) If $M \in \mathbb{K}^{q \times p}$ is invertible (i.e., $p=q$), then (\ref{eq:TMTensorInvertible}) implies that 
\begin{equation*}
    \operatorname{TM}_{\langle m,n,s \rangle,p}=\bigoplus_{r=1}^{p} \operatorname{M}_{m,n,s}\in (\mathbb{K}^{m \times n \times p})^* \otimes (\mathbb{K}^{n \times s \times p})^* \otimes \mathbb{K}^{m \times s \times p}
\end{equation*}
is concise too, and so
\begin{equation*}
    \mathbf{R}(\operatorname{TM}_{\langle m,n,s \rangle,p}) \geq \max\{mnp,nsp,msp\}.
\end{equation*}
Case 2) Assume that $M \in \mathbb{K}^{q \times p}$ is surjective (i.e., $p>q$). Then (\ref{eq:TMTensorInvertible}) implies that
\begin{equation*}
    \operatorname{TM}_{\langle m,n,s \rangle,p,q}=\bigoplus_{r=1}^{q} \operatorname{M}_{m,n,s}\in (\mathbb{K}^{m \times n \times p})^* \otimes (\mathbb{K}^{n \times s \times p})^* \otimes \mathbb{K}^{m \times s \times p}
\end{equation*}
is not concise. However, we can easily consider this tensor as a concise tensor in a smaller space. Let $V_1^*,V_2^*$ and $V_3$ be the $\mathbb{K}$-vector spaces with the bases $\{u_i^* \otimes v_j^* \otimes M^{\top}\tilde{e}_r^*~|~i \in [m], j \in [n], r \in [q] \}$, $\{v_j^* \otimes w_k^* \otimes M^{\top}\tilde{e}_r^*~|~j \in [n], k \in [s], r \in [q] \}$, and $\{u_i^* \otimes w_k^* \otimes M^{+}\tilde{e}_r~|~i \in [m], k \in [s], r \in [q] \}$, respectively. Now, consider their tensor product. Then, as an element in the smaller space,
\begin{equation*}
    \operatorname{TM}_{\langle m,n,s \rangle,p,q}=\bigoplus_{r=1}^{q} \operatorname{M}_{m,n,s} \in V_1^* \otimes V_2^* \otimes V_3
\end{equation*}
is concise, and so
\begin{equation*}
    \mathbf{R}(\operatorname{TM}_{\langle m,n,s \rangle,p,q}) \geq \max\{mnq,nsq,msq\}.
\end{equation*}
Case 3) In the case of injective $M \in \mathbb{K}^{q \times p}$, Example \ref{ex:ZeroTMTensor} shows that we cannot obtain any nonzero lower bound of  $\mathbf{R}(\operatorname{TM}_{\langle m,n,s \rangle,p,q})$ in general.  
\end{remark}

The tensor rank measures the minimal number of bilinear multiplications to calculate a bilinear map. It is also natural to measure the minimal number of additions, subtractions, multiplications, divisions, and constant multiplications, and this is measured by the \emph{total complexity}. Even for more general  rational maps $\varphi=(\varphi_1,...,\varphi_n):\mathbb{K}^m \rightarrow \mathbb{K}^n$, we can define their total complexity as the total complexity of the set of rational functions $F:=\{f_1,...,f_n\}$. We refer to \cite[Sections 4.1]{MR1440179} for the precise definition of the total complexity. For a rational map $\varphi$, we let $\mathbf{L}(\varphi)$ denote the total complexity of $\varphi$.

Now, we define two kinds of asymptotic exponents, one is on the tensor rank (of bilinear maps) and the other is on the total complexity (of bilinear maps or rational maps more generally).

\begin{definition}
Let $(a_n)$ denote a sequence $(a_1,a_2,a_3,\dots)$.
\begin{itemize}
    \item [(\romannumeral1)] For $(T_n)$ a sequence of bilinear maps, the \emph{asymptotic exponent of $(T_n)$ on the tensor rank}, denoted by $\omega(T_n)$, is defined by
    \begin{equation*}
        \omega(T_n)=\inf\{\tau \ | \ \mathbf{R}(T_n) = O(n^\tau)\}.
    \end{equation*}
    \item [(\romannumeral2)] For $(\varphi_n)$ a sequence of rational maps, the \emph{asymptotic exponent of $(\varphi_n)$ on the total complexity}, denoted by $\widehat{\omega}(\varphi_n)$, is defined by
    \begin{equation*}
        \widehat{\omega}(\varphi_n)=\inf\{\tau \ | \ \mathbf{L}(\varphi_n) = O(n^\tau)\}.
    \end{equation*}
\end{itemize}
\end{definition}
Note that it is well known that $\omega(\operatorname{M}_{\langle n\rangle})=\widehat{\omega}(\operatorname{M}_{\langle n \rangle})$ \cite{MR1440179, blaeser13}.

\section{Algebraic tensor functions}\label{Sec:algebraic}

We simply write $\operatorname{TM}_{\langle n \rangle,p,q}$ for $\operatorname{TM}_{\langle n,n,n\rangle, p,q}$ and $\operatorname{M}_{\langle n \rangle}$ for $\operatorname{M}_{\langle n,n,n \rangle}$. In this section, we introduce some results on the asymptotic exponent of $\operatorname{TM}_{\langle n \rangle,p,q}$ on its tensor rank, as well as on its total complexity.

\subsection{Asymptotic exponent of the tensor-tensor multiplication}
We claim that for the invertible and surjective case
\begin{equation*}
\omega(\operatorname{TM}_{\langle n\rangle,p,q})=\omega(\operatorname{M}_{\langle n \rangle}).
\end{equation*}
In order to prove this claim, we will use the following lemma which was stated in \cite{MR3749414}, that generalizes \cite[Lemma 14.10]{MR1440179}.

\begin{lemma}\label{commute}
Let $U,V,W$ and $U',V',W'$ be finite dimensional $\mathbb{K}$-vector spaces, and let $T:U \times V \rightarrow W$ and $T':U' \times V' \rightarrow W'$ be bilinear maps. If there is an injective (possibly invertible) map $f:U \times V \rightarrow U' \times V'$ and a surjective map $g:W' \rightarrow W$ such that the diagram
\[
\begin{tikzcd}
    U \times V \arrow[d,"T"]  \arrow[r,"f"] &  U' \times V' \arrow[d, "T'"] \\
    W &    W' \arrow[l,"g"]
\end{tikzcd}
\]
commutes, then $\mathbf{R}(T) \leq \mathbf{R}(T')$.
\end{lemma}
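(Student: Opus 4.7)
The plan is to take a minimal-rank decomposition of $T'$ and transport it through the commutative diagram to obtain a decomposition of $T$ of no larger length. Since the diagram involves bilinear maps on both vertical arrows, the injective map $f:U\times V\to U'\times V'$ must be interpreted as a pair of linear maps $f_1:U\to U'$ and $f_2:V\to V'$ (the product of these is the ``$f$'' in the diagram), and $g:W'\to W$ is linear.

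First I would fix a decomposition $T'=\sum_{i=1}^{r}\alpha'_i\otimes\beta'_i\otimes w'_i$ realizing $r=\mathbf{R}(T')$, i.e.\ with $\alpha'_i\in(U')^*$, $\beta'_i\in(V')^*$ and $w'_i\in W'$, so that $T'(u',v')=\sum_{i=1}^r\alpha'_i(u')\beta'_i(v')w'_i$. Then I would unwind the commutativity of the diagram, which precisely says $T(u,v)=g(T'(f_1(u),f_2(v)))$ for all $(u,v)\in U\times V$. Plugging the decomposition in gives
\begin{equation*}
T(u,v)=\sum_{i=1}^{r}\alpha'_i(f_1(u))\,\beta'_i(f_2(v))\,g(w'_i)=\sum_{i=1}^{r}(\alpha'_i\circ f_1)(u)\,(\beta'_i\circ f_2)(v)\,g(w'_i).
\end{equation*}

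Next I would set $\alpha_i:=\alpha'_i\circ f_1\in U^{*}$, $\beta_i:=\beta'_i\circ f_2\in V^{*}$ and $w_i:=g(w'_i)\in W$, which are well-defined because $f_1$, $f_2$ and $g$ are linear. The display above then reads
\begin{equation*}
T=\sum_{i=1}^{r}\alpha_i\otimes\beta_i\otimes w_i,
\end{equation*}
exhibiting a rank decomposition of $T$ of length at most $r=\mathbf{R}(T')$, whence $\mathbf{R}(T)\leq\mathbf{R}(T')$.

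There is essentially no serious obstacle; the only subtle point is making explicit that the ``injective'' hypothesis on $f$ and ``surjective'' hypothesis on $g$ are not needed for the rank-inequality direction (they guarantee, for example, that conciseness is preserved, which is useful in companion arguments but not here). The proof is really just the observation that bilinear-map decompositions are functorial under pre-composition in each factor and post-composition in the image, so composing a decomposition of $T'$ with the linear maps supplied by the diagram yields a decomposition of $T$ of equal or smaller length.
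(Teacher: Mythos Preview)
Your argument is correct and is the standard proof of this restriction inequality: pull back the linear forms along $f_1,f_2$ and push forward the vectors along $g$. Your observation that injectivity of $f$ and surjectivity of $g$ are not actually used in establishing $\mathbf{R}(T)\le\mathbf{R}(T')$ is also accurate.

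The paper itself does not prove this lemma; it merely cites it from \cite{MR3749414} (generalizing \cite[Lemma~14.10]{MR1440179}) and uses it as a black box. So there is no ``paper's own proof'' to compare against, but your write-up is exactly the argument one finds in those references.
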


\begin{theorem}\label{thm:AsymptoticExponentRank}
If $M \in \mathbb{K}^{q \times p}$ is either invertible or surjective (i.e., $p \geq q$), then
\begin{equation}\label{eq:AsymptoticExponentRank}
    \omega(\operatorname{TM}_{\langle n \rangle,p,q})=\omega(\operatorname{M}_{\langle n \rangle}).
\end{equation}
\end{theorem}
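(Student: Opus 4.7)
The plan is to exploit the explicit direct-sum decomposition
\begin{equation*}
\operatorname{TM}_{\langle n \rangle,p,q}=\bigoplus_{r=1}^{q}\operatorname{M}_{\langle n \rangle}
\end{equation*}
already established in equations (\ref{eq:TMTensorInvertible}) and (\ref{eq:TMTensorSurjective}) for the invertible and surjective cases respectively, and then to derive matching upper and lower bounds on the tensor rank from it. Because $p$ and $q$ are constants independent of $n$, every constant factor that appears will be absorbed by $O(n^{\tau})$ when taking the infimum that defines $\omega$.

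For the upper bound $\omega(\operatorname{TM}_{\langle n \rangle,p,q})\leq \omega(\operatorname{M}_{\langle n \rangle})$, I would apply the standard sub-additivity of tensor rank under direct sums to get
\begin{equation*}
\mathbf{R}(\operatorname{TM}_{\langle n \rangle,p,q})\leq q\cdot \mathbf{R}(\operatorname{M}_{\langle n \rangle}).
\end{equation*}
Since $q$ is fixed, any $\tau$ with $\mathbf{R}(\operatorname{M}_{\langle n \rangle})=O(n^{\tau})$ also satisfies $\mathbf{R}(\operatorname{TM}_{\langle n \rangle,p,q})=O(n^{\tau})$, giving the exponent inequality.

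For the lower bound $\omega(\operatorname{M}_{\langle n \rangle})\leq \omega(\operatorname{TM}_{\langle n \rangle,p,q})$, I would apply Lemma \ref{commute} to embed a single copy of $\operatorname{M}_{\langle n \rangle}$ inside $\operatorname{TM}_{\langle n \rangle,p,q}$. Given $A,B\in\mathbb{K}^{n\times n}$, define linear injections $f_1(A)=\mathcal{A}$ and $f_2(B)=\mathcal{B}$ in $\mathbb{K}^{n\times n\times p}$ by requiring
\begin{equation*}
(\mathcal{A}\times_3 M)^{(1)}=A,\qquad (\mathcal{A}\times_3 M)^{(r)}=0 \text{ for } r>1,
\end{equation*}
and analogously for $\mathcal{B}$; such preimages exist because $\times_3 M$ is surjective onto $\mathbb{K}^{n\times n\times q}$ in both the invertible case and the surjective case (in the latter from $MM^{+}=I_q$). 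Using the identity $(\mathcal{A}*_M\mathcal{B})\times_3 M=(\mathcal{A}\times_3 M)\triangle(\mathcal{B}\times_3 M)$, which also follows from $MM^{+}=I_q$, the only nonzero frontal slice of $(\mathcal{A}*_M\mathcal{B})\times_3 M$ is exactly $AB$. Hence the linear surjection $g(\mathcal{C}):=(\mathcal{C}\times_3 M)^{(1)}$ makes the diagram of Lemma \ref{commute} commute with $T=\operatorname{M}_{\langle n \rangle}$ and $T'=\operatorname{TM}_{\langle n \rangle,p,q}$, yielding $\mathbf{R}(\operatorname{M}_{\langle n \rangle})\leq \mathbf{R}(\operatorname{TM}_{\langle n \rangle,p,q})$, and thus the corresponding exponent inequality.

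Combining the two bounds gives (\ref{eq:AsymptoticExponentRank}). No serious obstacle is expected here, since both directions are essentially bookkeeping on top of the direct-sum decomposition and the right-inverse identity $MM^{+}=I_q$; the only point requiring mild care is verifying injectivity of $f_1,f_2$ and surjectivity of $g$, both of which follow immediately from the full-rank condition on $M$.
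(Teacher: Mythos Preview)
Your proposal is correct and follows essentially the same approach as the paper: both directions use the direct-sum decomposition for the upper bound and Lemma~\ref{commute} for the lower bound, with the paper's composite maps $f_2\circ f_1$ and $g_2\circ g_1$ being exactly your $f$ and $g$ written out in two steps. One small point worth tightening: in the surjective case $\times_3 M$ has nontrivial kernel, so ``the $\mathcal{A}$ with $(\mathcal{A}\times_3 M)^{(1)}=A$ and other slices zero'' is not unique; to make $f_1,f_2$ well-defined \emph{linear} maps you should pick the canonical section $\mathcal{A}=[A\,|\,O\,|\cdots|\,O]\times_3 M^{+}$, which is precisely what the paper does.
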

\begin{proof}
From (\ref{eq:TMTensorInvertible}) and (\ref{eq:TMTensorSurjective}), we obtain that $\mathbf{R}(\operatorname{TM}_{\langle n \rangle,p,q})\leq \mathbf{R}(\operatorname{M}_{\langle n \rangle})\cdot q$. Hence,
\begin{equation*}
    \omega(\operatorname{TM}_{\langle n \rangle,p,q})\leq \omega(\operatorname{M}_{\langle n \rangle}).
\end{equation*}
Now, we prove the other inequality. We have the following commutative diagram:
\[
\begin{tikzcd}
(A, B) \arrow[d, phantom]  \arrow[r, phantom, "\longmapsto"] & (\mathcal{A},\mathcal{B})\arrow[d, phantom] \arrow[r, phantom, "\longmapsto"] & (\mathcal{A}\times_{3}M^{+},\mathcal{B}\times_{3}M^{+}) \\
    \mathbb{K}^{n\times n}\times\mathbb{K}^{n\times n} \arrow[r,"f_1"] \arrow[d,"\operatorname{M}_{\langle n \rangle}"] & \mathbb{K}^{n\times n \times q}\times\mathbb{K}^{n\times n\times q} \arrow[r,"f_2"] \arrow[d, phantom, "\circlearrowright"]  & \mathbb{K}^{n\times n\times p}\times\mathbb{K}^{n\times n\times p} \arrow[d,"\operatorname{TM}_{\langle n \rangle,p,q}"] \\
    \mathbb{K}^{n\times n} \arrow[d, phantom] & \mathbb{K}^{n\times n\times q} \arrow[l,"g_2"] \arrow[d, phantom] & \mathbb{K}^{n\times n\times p} \arrow[l,"g_1"]  \arrow[d, phantom] \\
    AB & \mathcal{A}\triangle\mathcal{B} \arrow[l, phantom, "\longleftarrow\!\shortmid"] & (\mathcal{A}\triangle\mathcal{B})\times_{3}M^{+} \arrow[l, phantom, "\longleftarrow\!\shortmid"]
\end{tikzcd}
\]
where $f_1,f_2, g_1$, and $g_2$ are respectively defined by
\begin{itemize}
    \item $f_1(A,B)=\left(\left[\begin{array}{c|c|c|c}
   A & O & \cdots & O 
\end{array}\right],\left[\begin{array}{c|c|c|c}
   B & O & \cdots & O 
\end{array}\right]\right)$, where $O$ is the zero matrix,
    \item $f_2(\mathcal{A},\mathcal{B})=(\mathcal{A}\times_{3}M^{+},\mathcal{B}\times_{3}M^{+})$,
    \item $g_1(\mathcal{A})=\mathcal{A} \times_3 M$, and
    \item $g_2\left(\left[\begin{array}{c|c|c|c}
   A^{(1)} & A^{(2)} & \cdots & A^{(q)} 
\end{array}\right]\right)=A^{(1)}$.
\end{itemize}
This diagram commutes, since
\begin{equation*}
\begin{aligned}
    &(g_2 \circ g_1 \circ \operatorname{TM}_{\langle n \rangle,p,q} \circ f_2 \circ f_1)(A,B)\\
    &=(g_2 \circ g_1 \circ \operatorname{TM}_{\langle n \rangle,p,q} \circ f_2)\left(\left[\begin{array}{c|c|c|c}
   A & O & \cdots & O 
\end{array}\right],~\left[\begin{array}{c|c|c|c}
   B & O & \cdots & O 
\end{array}\right]\right)\\
    &=(g_2 \circ g_1 \circ \operatorname{TM}_{\langle n \rangle,p,q})\left(\left[\begin{array}{c|c|c|c}
   A & O & \cdots & O 
\end{array}\right]\times_{3}M^{+},~\left[\begin{array}{c|c|c|c}
   B & O & \cdots & O 
\end{array}\right]\times_{3}M^{+}\right)\\
    &=(g_2 \circ g_1)\left(\left[\begin{array}{c|c|c|c}
   AB & O & \cdots & O 
\end{array}\right] \times_3 M^+\right)~ (\because MM^+=I_q)\\
    &=g_2\big(\left[~AB~|~O~|~\cdots~|~O~\right]\big)\\
    &=AB\\
    &=\operatorname{M}_{\langle n \rangle} (A,B).
\end{aligned}
\end{equation*}
Additionally, $f_2 \circ f_1$ is injective (possibly invertible) and $g_2 \circ g_1$ is surjective. By Lemma~\ref{commute}, we have that $\mathbf{R}(\operatorname{M}_{\langle n \rangle}) \leq \mathbf{R}(\operatorname{TM}_{\langle n \rangle,p,q})$. Therefore, 
\begin{equation*}
\omega(\operatorname{M}_{\langle n \rangle})\leq\omega(\operatorname{TM}_{\langle n \rangle,p,q}).
\end{equation*}
\end{proof}

\begin{remark}\label{rmk:InjAsymptoticEquality}
Example \ref{ex:ZeroTMTensor} shows that $\omega(\operatorname{TM}_{\langle n \rangle,p,q})<\omega(M_{\langle n \rangle})$ for some injective $M\in\mathbb{K}^{q\times p}$. In particular, the example even shows that $\omega(\operatorname{TM}_{\langle n \rangle,p,q})$ can be zero. Here, we discuss how many injective matrices satisfy the equality (\ref{eq:AsymptoticExponentRank}).

Let $M=[m_{ij}]\in\mathbb{K}^{q\times p}$ be an injective matrix (i.e., $p<q$), and let its pseudoinverse be $M^{+}=[n_{ij}]\in\mathbb{K}^{p\times q}$. Consider the maps $f_1,f_2,g_1$ and $g_2$ in the proof of Theorem \ref{thm:AsymptoticExponentRank}. Since $MM^+ \neq I_q$, then the map $f_2$ will not be helpful, so we will only use $f_1$. For $A,B \in \mathbb{K}^{n \times n}$,
\begin{equation*}
    \begin{aligned}
        &(\operatorname{TM}_{\langle n \rangle,p,q} \circ f_1)(A,B)\\
        &=\operatorname{TM}_{\langle n \rangle,p,q}\left(\left[\begin{array}{c|c|c|c}
   A & O & \cdots & O 
\end{array}\right],\left[\begin{array}{c|c|c|c}
   B & O & \cdots & O 
\end{array}\right]\right)\\
        &=\left(\left[\begin{array}{c|c|c|c}
   m_{11}A & m_{21}A & \cdots & m_{q1}A 
\end{array}\right]\triangle \left[\begin{array}{c|c|c|c}
   m_{11}B & m_{21}B & \cdots & m_{q1}B 
\end{array}\right]\right)\times_3 M^+\\
        &= \left(\left[\begin{array}{c|c|c|c}
   m_{11}^2AB & m_{21}^2AB & \cdots & m_{q1}^2AB
\end{array}\right]\right)\times_3 M^+\\
        &=\left[\begin{array}{c|c|c|c}
  \sum_{j=1}^q n_{1j}m_{j1}^2AB & \sum_{j=1}^q n_{2j}m_{j1}^2AB & \cdots & \sum_{j=1}^q n_{pj}m_{j1}^2AB
\end{array}\right].
    \end{aligned}
\end{equation*}
If $\sum_{j=1}^q n_{ij}m_{j1}^2 \neq 0$ for some $i\in [q]$, it is possible to prove that $\omega(\operatorname{TM}_{\langle n \rangle,p,q})=\omega(\operatorname{M}_{\langle n \rangle})$ by taking the surjective map 
\begin{equation*}
g_2:\mathbb{K}^{n \times n \times q} \rightarrow \mathbb{K}^{n \times n}, \left[\begin{array}{c|c|c|c}
   A^{(1)} & A^{(2)} & \cdots & A^{(q)} 
\end{array}\right] \mapsto A^{(i)}.
\end{equation*}
Therefore, the equality holds for a generic injective $M$ in the quasi-affine algebraic set of full-rank matrices in $\mathbb{K}^{q \times p}$. Note that Example \ref{ex:ZeroTMTensor} gives us the case in which we cannot choose such $i$.
\end{remark}

We have studied the asymptotic exponent of $\operatorname{TM}_{\langle n \rangle,p,q}$ on the tensor rank and now we study its total complexity analogue. The following theorem will make Theorem \ref{thm:AsymptoticExponentRank} more meaningful, in the sense that it implies that it is enough to study the asymptotic exponent on the tensor rank. Its proof is similar to the proof of $\widehat{\omega}(\operatorname{M}_{\langle n \rangle}) = \omega(\operatorname{M}_{\langle n \rangle})$ in \cite{blaeser13}.

\begin{theorem}\label{thm:AsymptoticExponentTotal}
If $M \in \mathbb{K}^{q \times p}$ is either invertible or surjective (i.e., $p \geq q$), then
\begin{equation}
\widehat{\omega}(\operatorname{TM}_{\langle n \rangle,p,q})=\omega(\operatorname{TM}_{\langle n \rangle,p,q}).
\end{equation}
\end{theorem}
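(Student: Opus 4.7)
The plan is to prove the two inequalities $\omega(\operatorname{TM}_{\langle n\rangle,p,q}) \leq \widehat{\omega}(\operatorname{TM}_{\langle n\rangle,p,q})$ and $\widehat{\omega}(\operatorname{TM}_{\langle n\rangle,p,q}) \leq \omega(\operatorname{TM}_{\langle n\rangle,p,q})$ separately, mirroring the classical Strassen–Pan argument for matrix multiplication in \cite{blaeser13}. Throughout, I would treat $p$ and $q$ as fixed constants and think of $n$ as the growing parameter.

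For the easy direction $\omega(\operatorname{TM}_{\langle n\rangle,p,q}) \leq \widehat{\omega}(\operatorname{TM}_{\langle n\rangle,p,q})$, I would invoke the standard fact from algebraic complexity theory that, for every bilinear map $T$, the tensor rank is bounded by the total complexity up to a universal constant, i.e.\ $\mathbf{R}(T) \leq c\,\mathbf{L}(T)$ (see \cite[Chapter 14]{MR1440179}). Applied to each $T=\operatorname{TM}_{\langle n\rangle,p,q}$, this immediately passes to the asymptotic exponents.

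For the reverse direction $\widehat{\omega}(\operatorname{TM}_{\langle n\rangle,p,q}) \leq \omega(\operatorname{TM}_{\langle n\rangle,p,q})$, by Theorem \ref{thm:AsymptoticExponentRank} it suffices to show $\widehat{\omega}(\operatorname{TM}_{\langle n\rangle,p,q}) \leq \omega(\operatorname{M}_{\langle n\rangle})$. I would exploit the direct-sum decompositions \eqref{eq:TMTensorInvertible} and \eqref{eq:TMTensorSurjective} algorithmically. Fixing $\tau>\omega(\operatorname{M}_{\langle n\rangle})$ and an $n\times n$ matrix multiplication algorithm of total cost $O(n^{\tau})$, I would compute $\mathcal{A}*_M\mathcal{B}$ for $\mathcal{A},\mathcal{B}\in\mathbb{K}^{n\times n\times p}$ in three phases: (i) apply $\times_3 M$ to both $\mathcal{A}$ and $\mathcal{B}$, a fixed-size linear transformation costing $O(pqn^2)$; (ii) evaluate the $q$ face-wise matrix multiplications using the assumed algorithm, for a cost of $O(q\,n^{\tau})$; (iii) apply $\times_3 M^+$ to the resulting tensor, another $O(pqn^2)$ operations. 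Since $p,q$ are constants and $\tau\geq 2$, the overall cost is $O(n^{\tau})$, so $\mathbf{L}(\operatorname{TM}_{\langle n\rangle,p,q})\leq O(n^\tau)$. Taking the infimum over such $\tau$ gives $\widehat{\omega}(\operatorname{TM}_{\langle n\rangle,p,q}) \leq \omega(\operatorname{M}_{\langle n\rangle})=\omega(\operatorname{TM}_{\langle n\rangle,p,q})$.

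There is no serious obstacle here: the hard direction is essentially bookkeeping once one observes that the linear pre- and post-processing steps cost only $O(n^2)$ for fixed $p,q$ and are therefore absorbed by the $q$ matrix multiplications. The only point requiring care is correctly citing the universal inequality between tensor rank and total complexity for bilinear maps in the easy direction; all remaining work amounts to tracking constants that do not affect the asymptotic exponent.
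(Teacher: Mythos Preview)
Your proposal is correct, but it follows a genuinely different route from the paper in the nontrivial direction $\widehat{\omega}(\operatorname{TM}_{\langle n\rangle,p,q})\le\omega(\operatorname{TM}_{\langle n\rangle,p,q})$. The paper does \emph{not} pass through Theorem~\ref{thm:AsymptoticExponentRank} or the known equality $\widehat{\omega}(\operatorname{M}_{\langle n\rangle})=\omega(\operatorname{M}_{\langle n\rangle})$; instead it reproduces the Strassen--Bl\"aser recursion directly at the tensor level: fix a large $m$, let $r=\mathbf{R}(\operatorname{TM}_{\langle m\rangle,p,q})$, view an $m^i\times m^i\times p$ input as an $m\times m$ block array of $m^{i-1}\times m^{i-1}\times p$ tensors, and run the rank-$r$ bilinear algorithm recursively to obtain $\operatorname{A}(i)\le r\,\operatorname{A}(i-1)+c\,m^{2(i-1)}$, hence $\mathbf{L}(\operatorname{TM}_{\langle n\rangle,p,q})=O(n^{\log_m r})$. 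The lower bound $r\ge m^2q$ from Remark~\ref{rmk:LowerBoundFromConcise} is used to make the geometric sum converge. Your argument is shorter and more modular: it outsources the recursion to the classical matrix result and only has to check that the $\times_3 M$ and $\times_3 M^+$ pre/post-processing costs $O(n^2)$, which is immediate. The paper's argument, by contrast, is self-contained (it never invokes $\widehat{\omega}(\operatorname{M}_{\langle n\rangle})=\omega(\operatorname{M}_{\langle n\rangle})$ or Theorem~\ref{thm:AsymptoticExponentRank} in this proof) and shows explicitly that the $*_M$-product itself admits block recursion, which is of some independent interest. For the easy direction both of you give essentially the same one-line appeal to $\mathbf{R}(T)\le c\,\mathbf{L}(T)$.
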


\begin{proof}
$\widehat{\omega}(\operatorname{TM}_{\langle n \rangle,p,q})\geq \omega(\operatorname{TM}_{\langle n \rangle,p,q})$ is obvious from the definition of the tensor rank $\mathbf{R}(\operatorname{TM})$, which counts only the number of bilinear multiplications.

Now, we prove $\widehat{\omega}(\operatorname{TM}_{\langle n \rangle,p,q}) \leq \omega(\operatorname{TM}_{\langle n \rangle,p,q})$.  From now on, we simply write $\omega:=\omega(\operatorname{TM}_{\langle n \rangle,p,q})$. From its definition, there exists $\alpha \in \mathbb{R}$ such that for all $\epsilon >0$ there exists $ m_0>1$ such that $ m\geq m_0$ implies
\begin{equation}\label{eq:UpperBoundOfR}
    \mathbf{R}(\operatorname{TM}_{\langle n \rangle,p,q})\leq \alpha\cdot m^{\omega+\epsilon}.
\end{equation}

Let $\epsilon>0$ be given, choose an $m$ large enough and let $r:=\mathbf{R}(\operatorname{TM}_{\langle m \rangle,p})$. Now, we choose the following algorithm to multiply two $(m^i\times m^i\times p)$-sized tensors: we decompose them into $(m^{i-1}\times m^{i-1}\times p)$-sized tensors (the number of these tensors is $m\times m$) and then apply the following recursion
\begin{equation}\label{eq:recursion}
\operatorname{A}(i)\leq r\cdot \operatorname{A}(i-1)+c\cdot m^{2(i-1)},
\end{equation}
where $\operatorname{A}(i)$ denotes the number of arithmetic operations for the multiplication of two $(m^i\times m^i\times p)$-sized tensors with the chosen algorithm and where $c$ is the number of additions and scalar multiplications that are performed by the chosen bilinear algorithm.

From \eqref{eq:recursion}, we get the inequality
\begin{equation*}
\operatorname{A}(i)\leq r^i\cdot\operatorname{A}(0)+c\cdot m^{2(i-1)}\left(\sum\limits_{j=0}^{i-1}\frac{r^j}{m^{2j}} \right).
\end{equation*}
From Remark \ref{rmk:LowerBoundFromConcise}, we have that $r\geq m^2q\geq m^2$ (we assumed $q\geq 1$ at Section \ref{Sec:injective}).\\
Case 1) If $r=m^2$, then this expression becomes
\begin{equation*}
\begin{aligned}
\operatorname{A}(i)&\leq r^i\cdot\operatorname{A}(0)+c\cdot m^{2(i-1)}\left(\sum\limits_{j=0}^{i-1}1 \right)\\
&=r^i\cdot\operatorname{A}(0)+c\cdot m^{2(i-1)}i\\
&=r^i\cdot\operatorname{A}(0)+c\cdot r^{(i-1)}i.
\end{aligned}
\end{equation*}
Case 2) If $r>m^2$, then we have that
\begin{equation*}
\begin{aligned}
\operatorname{A}(i)&\leq r^i\cdot\operatorname{A}(0)+c\cdot m^{2(i-1)}\left(\sum\limits_{j=0}^{i-1}\left(\frac{r}{m^{2}}\right)^j \right)\\
&=r^i\cdot\operatorname{A}(0)+c\cdot m^{2(i-1)}\frac{m^2}{r-m^2}\left( \frac{r^i}{m^{2i}}-1\right)\\
&=r^i\cdot\operatorname{A}(0)+\frac{c}{r-m^2}\left(m^{2i} \frac{r^i}{m^{2i}}-m^{2i}\right)\\
&=r^i\cdot\operatorname{A}(0)+\frac{c}{r-m^2}\left(r^{i}-m^{2i}\right)\\
&=\left(\operatorname{A}(0)+\frac{c}{r-m^2}\right)r^i -\frac{c}{r-m^2}\cdot m^{2i}.
\end{aligned}
\end{equation*}
Thus, we have $A(i)=O(r^i)$ for all $i>0$.

Note that $\mathbf{L}(\operatorname{TM}_{\langle n\rangle,p,q}) \leq \mathbf{L}(\operatorname{TM}_{\langle n'\rangle,p,q})$ if $n \leq n'$, and the inequality $n\leq m^{\lceil \log_m n \rceil}$ holds. Hence, we obtain that
\begin{equation*}
\begin{aligned}
\mathbf{L}(\operatorname{TM}_{\langle n\rangle,p,q})&\leq \mathbf{L}(\operatorname{TM}_{\langle m^{\lceil \log_m n \rceil}\rangle,p,q}) \\
&\leq \operatorname{A}(\lceil \log_m n \rceil)\\
&= O(r^{\lceil \log_m n \rceil})\\
&= O(r^{\log_m n})\\
&= O(n^{\log_m r}).
\end{aligned}
\end{equation*}
Since $r\leq\alpha\cdot m^{\omega+\epsilon}$ (see \ref{eq:UpperBoundOfR}), then $\log_m r\leq\omega+\epsilon+\log_m \alpha$. For $\epsilon':=\epsilon+\log_m \alpha$, we have that
\begin{equation*}
\mathbf{L}(\operatorname{TM}_{\langle n\rangle,p,q})
= O(n^{\omega+\epsilon'}).
\end{equation*}
That is, $\widehat{\omega}(\operatorname{TM}_{\langle n \rangle,p,q})\leq \omega(\operatorname{TM}_{\langle n \rangle,p,q})+\delta$ for all $\delta>0$, and thus,
\begin{equation*}
    \widehat{\omega}(\operatorname{TM}_{\langle n \rangle,p,q})\leq \omega(\operatorname{TM}_{\langle n \rangle,p,q}).
\end{equation*}
\end{proof}

\subsection{Asymptotic exponent of the tensor polynomial evaluation problem}

We investigate the asymptotic exponent on the total complexity of the tensor polynomial evaluation problem under the tensor-tensor multiplication $*_M$, where $M$ is either invertible or surjective. We will not consider injective $M$, since associativity fails in this case, as well as because of Example \ref{ex:ZeroTMTensor}. Before dealing with the tensor polynomial evaluation problem, we first review the result in \cite{MR4144013} on the matrix polynomial evaluation problem, and then introduce its proof. Then, we show that the asymptotic exponent on the total complexity of the tensor polynomial evaluation problem is the same as the asymptotic exponent of the matrix multiplication.

Let $f_n(X_1,\dots,X_k)$ be a matrix polynomial  with $k$ independent noncommuting $n \times n$ matrix indeterminates $X_1,\dots,X_k$ (for instance, $f_n(X_1,X_2,X_3)=X_1X_2X_1+X_3^2+3I_n$ is an example of such a polynomial when $k=3$). The problem of evaluating any matrices $A_1,\dots,A_k\in\mathbb{K}^{n\times n}$ to $f_n$, denoted by $\operatorname{ev}_{f_n}$, is called the matrix polynomial evaluation problem. In \cite[Theorem 5.1]{MR4144013} it is asserted that, for any such $f_n$, 
\begin{equation}\label{eq:ExponentPolyMatrixEval}
    \widehat{\omega}(\operatorname{ev}_{f_n})=\widehat{\omega}(\operatorname{M}_{\langle n \rangle}).
\end{equation}
The proof of this theorem is not included in the manuscript and, thus, we provide a proof for any matrix polynomial that is not linear. Then, the proof for the $\operatorname{TM}_{\langle n \rangle, p,q}$ case can be constructed in an analogous manner. We first introduce a necessary lemma for the proof of \cite[Theorem 5.1]{MR4144013}.

\begin{lemma}\label{lemma:matrix_eval}
Let $A,B$ be two matrices in $\mathbb{C}^{n\times n}$. For any $d>1$, the matrix multiplication $AB$ can be calculated by an evaluation on the matrix power function $f_n(X)=X^d$. 
\end{lemma}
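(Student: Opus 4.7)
The plan is to reduce the multiplication of two $n\times n$ matrices $A$ and $B$ to a single evaluation of $f_m(X)=X^d$ on a matrix $X$ of size $m=(d+1)n$ built from $A$, $B$ and a few constant blocks. The guiding idea is that if we place $A$, $B$ and copies of the identity on the block superdiagonal of a strictly upper triangular block matrix, then the $d$-th power of that matrix will have exactly one nonzero block, and that block will be $AB$.

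Concretely, I partition $X\in\mathbb{C}^{m\times m}$ into a $(d+1)\times(d+1)$ grid of $n\times n$ blocks $X_{ij}$ and set $X_{12}=A$, $X_{23}=B$, $X_{k,k+1}=I_n$ for $3\le k\le d$ (this range is empty when $d=2$), and $X_{ij}=0$ otherwise. Since $X$ is strictly upper block triangular it is nilpotent, and a short induction on $k$ shows that the only possibly nonzero blocks of $X^k$ lie on the $k$-th block superdiagonal, with
\[
(X^k)_{i,i+k}=X_{i,i+1}\,X_{i+1,i+2}\cdots X_{i+k-1,i+k}.
\]
Specializing to $k=d$, the only index that fits inside the $(d+1)\times(d+1)$ grid is $i=1$, and the telescoping product collapses to $A\cdot B\cdot I_n\cdots I_n=AB$. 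Hence the block $(1,d+1)$ of $X^d$ is exactly $AB$, and every other block of $X^d$ is zero.

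The embedding $(A,B)\mapsto X$ and the extraction of the block $(1,d+1)$ from $X^d$ are purely data-movement operations (inserting $A$ and $B$ into a sparse layout of zeros and identities, then reading off an $n\times n$ window), costing only $O(n^2)$ scalar operations. In particular, they contribute no more than a lower-order additive term to the arithmetic cost, so $AB$ is computed by a single call to $\operatorname{ev}_{f_m}$, and since $m=(d+1)n$ is a constant multiple of $n$ the reduction preserves asymptotic exponents. This is the ingredient needed for the bound $\widehat{\omega}(\operatorname{M}_{\langle n\rangle})\le\widehat{\omega}(\operatorname{ev}_{f_n})$ that underlies the identity (\ref{eq:ExponentPolyMatrixEval}) from \cite{MR4144013}.

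The only non-routine step is choosing the right padding pattern; once the block-nilpotent construction is fixed, the identification $(X^d)_{1,d+1}=AB$ is an elementary block multiplication and the remainder of the argument is bookkeeping. For $d=2$ one can alternatively use the more compact $X=\begin{bmatrix}0&A\\B&0\end{bmatrix}\in\mathbb{C}^{2n\times 2n}$, whose top-left block of $X^2$ equals $AB$, but the uniform construction above treats every $d>1$ simultaneously without case analysis.
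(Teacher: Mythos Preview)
Your proof is correct but uses a different construction from the paper. The paper builds a $d\times d$ block matrix $P$ of size $dn$ with $A$ in the $(1,2)$-block, identities along the rest of the superdiagonal, and $B$ wrapped around into the $(d,1)$-block; then $P^d$ is block diagonal with $AB$ in the $(1,1)$-corner (and $BA$ on the remaining diagonal blocks), and one reads $AB$ from there. Your construction instead pads out to a $(d+1)\times(d+1)$ grid and keeps $X$ strictly block upper triangular, so $X$ is nilpotent and $X^d$ has $AB$ as its unique nonzero block in position $(1,d+1)$. The paper's version is marginally more economical in size ($dn$ versus $(d+1)n$), while yours avoids the explicit small-$d$ case checks and makes the vanishing of lower powers at the target block immediate from nilpotence---which is exactly the property the paper records separately as Remark~\ref{rmk:matrix_eval_one-one} and then uses in the proof of (\ref{eq:ExponentPolyMatrixEval}). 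Either construction suffices for the lemma and for the subsequent application.
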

\begin{proof}
Since the following two equalities hold
\begin{equation*}
    \begin{bmatrix}
        O & A\\
        B & O
    \end{bmatrix}^2=\begin{bmatrix}
    AB & O \\
    O & BA
\end{bmatrix},~\begin{bmatrix}
        O & A & O\\
        O & O & I\\
        B & O & O
    \end{bmatrix}^3=\begin{bmatrix}
    AB & O & O\\
    O & BA & O\\
    O & O & BA
\end{bmatrix},
\end{equation*}
and, similarly, since for the block $d \times d$ ($d \geq 3$) matrix
\begin{equation}\label{eq:BlockMatrixPower}
P=\begin{bmatrix}
    O & A & O & O &\cdots & O\\
    O & O & I & O & \ddots  & \vdots \\
    \vdots & \ddots & \ddots & \ddots & \ddots & \vdots \\
    \vdots  & \ddots & \ddots & O &  I & O\\
    O & \cdots  & \cdots & \cdots &  O & I\\
    B & O & O & \cdots & O & O
\end{bmatrix},
\end{equation}
we have
\begin{equation*}
    P^d=\begin{bmatrix}
    AB &  &  &\\
     & BA &   &\\
     &  & \ddots & \\
     &   &  & BA 
\end{bmatrix},
\end{equation*}
then the matrix multiplication $AB$ can be calculated by these evaluations on the matrix power function $f_n(X)=X^d$ for all $d \geq 1$.
\end{proof}

\begin{remark}\label{rmk:matrix_eval_one-one}
Note that the $(1,1)$-block of $\begin{bmatrix}
        O & A\\
        B & O
    \end{bmatrix}$ 
is the zero matrix. Furthermore, for the matrix $P$ at (\ref{eq:BlockMatrixPower}), the top left corner block of its $s$-th power $P^s$ is the zero matrix for all $1\leq s<d$.
\end{remark}

\begin{proof}[Proof of {\cite[Theorem 5.1]{MR4144013}}]

Let $f_n$ be a nonlinear matrix polynomial of independent $n \times n$ matrix indeterminates $X_1,\dots,X_k$ with degree $d$. We show the equality (\ref{eq:ExponentPolyMatrixEval}) for $f$. It is obvious that we can evaluate $n \times n$ matrices $A_1,\dots,A_k$ on $f$ by performing the $n \times n$ matrix multiplication $\operatorname{M}_{\langle n \rangle}$ several times. Thus, 
\begin{equation*}
    \widehat{\omega}(\operatorname{ev}_{f_n})\leq\widehat{\omega}(\operatorname{M}_{\langle n \rangle}).
\end{equation*}

Now we will prove the other inequality. Let $A,B$ be $n\times n$ matrices. Consider the univariate matrix polynomial $f_{m}(X_1,X_1,\dots,X_1)$, where $X_1$ denotes an $m \times m$ matrix indeterminate. Let the largest exponent of $X$ in $f_{m}(X_1,X_1,\dots,X_1)$ be $d$, and let $f_m(X_1,X_1,\dots,X_1)$ have the constant $m \times m$ matrix term $C_m$. Take the $P$ at (\ref{eq:BlockMatrixPower}), then Lemma \ref{lemma:matrix_eval} and Remark \ref{rmk:matrix_eval_one-one} imply that the the top left corner block of $f_{dn}(P^d)$ is 
\begin{equation*}
    AB+C_{dn}(1:n,1:n),
\end{equation*}
where $C_{dn}(1:n,1:n)$ denotes the top left corner block of $C_{dn}$. Therefore, $\operatorname{M}_{\langle n\rangle}(A,B)$ can be calculated by evaluating $P$ to $f_{dn}$ for fixed $d$ and then subtracting the fixed constant matrix $C_{dn}(1:n,1:n)$, and so
\begin{equation*}
    \widehat{\omega}(\operatorname{ev}_{f_n})\geq\widehat{\omega}(\operatorname{M}_{\langle n \rangle}).
\end{equation*}
\end{proof}

\begin{remark}
If $f_n$ is a linear matrix polynomial, then no matrix multiplication has been computed when evaluating matrices, and thus it is hard to compare the total complexity of matrix multiplication and matrix polynomial evaluation. If the equality $\omega(\operatorname{M}_{\langle n\rangle})=2$ holds, then (\ref{eq:ExponentPolyMatrixEval}) also holds. However, the present best bounds for $\omega(\operatorname{M}_{\langle n\rangle})$ are $2 \leq \omega(\operatorname{M}_{\langle n\rangle}) \leq 2.371339$ \cite{MR4863478}. In the case of multiplicative complexity (see \cite[Section 4.1]{MR1440179}), the asymptotic exponent of the linear matrix polynomial evaluation is even zero.
\end{remark}
 
\begin{theorem}\label{Thm:AsymptoticPolynomial}
Let $M \in \mathbb{K}^{q \times p}$ be either invertible or surjective (i.e., $p \geq q$) and let $f_n(\mathcal{X}_1,\dots,\mathcal{X}_k)$ be a tensor polynomial under the $*_M$-product, where $\mathcal{X}_1,\dots,\mathcal{X}_k$ are $k$ independent $n \times n$ matrix indeterminates. Let $\mathrm{ev}_{f_n}$ be the tensor polynomial evaluation problem. Then,
\begin{equation}
\widehat{\omega}(\operatorname{ev}_{f_n}) = \widehat{\omega}(\operatorname{TM}_{\langle n \rangle,p})=\omega(\operatorname{M}_{\langle n \rangle}).
\end{equation}
\end{theorem}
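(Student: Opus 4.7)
My plan is to handle the two equalities separately. The second equality $\widehat{\omega}(\operatorname{TM}_{\langle n \rangle, p, q}) = \omega(\operatorname{M}_{\langle n \rangle})$ is immediate from chaining Theorems \ref{thm:AsymptoticExponentRank} and \ref{thm:AsymptoticExponentTotal}, since the assumption $p \geq q$ covers exactly the invertible and surjective regimes treated there. For the first equality, the direction $\widehat{\omega}(\operatorname{ev}_{f_n}) \leq \widehat{\omega}(\operatorname{TM}_{\langle n \rangle, p, q})$ is the easy one: each evaluation of $f_n$ consists of a fixed number (depending only on $f$, not on $n$) of $*_M$-products, tensor additions, and scalar multiplications.

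For the reverse inequality $\widehat{\omega}(\operatorname{ev}_{f_n}) \geq \widehat{\omega}(\operatorname{TM}_{\langle n \rangle, p, q})$, I would mimic the block-embedding strategy used in the authors' reworking of \cite[Theorem 5.1]{MR4144013}, assuming (as in the matrix case) that $f_n$ is not linear. First, I would reduce to a univariate tensor polynomial by substituting a common tensor into every indeterminate, and let $d$ denote its degree. Given arbitrary $\mathcal{A}, \mathcal{B} \in \mathbb{K}^{n \times n \times p}$, write $\widehat{\mathcal{A}} = \mathcal{A} \times_3 M$ and $\widehat{\mathcal{B}} = \mathcal{B} \times_3 M$, and define $\widehat{\mathcal{P}} \in \mathbb{K}^{dn \times dn \times q}$ frontal-slice-wise so that its $k$-th frontal slice is the block matrix $P$ of (\ref{eq:BlockMatrixPower}) with $\widehat{A}^{(k)}, \widehat{B}^{(k)}$ in place of $A, B$. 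Then set $\mathcal{P} := \widehat{\mathcal{P}} \times_3 M^+ \in \mathbb{K}^{dn \times dn \times p}$; because $MM^+ = I_q$ holds in both the invertible and surjective cases, we have $\mathcal{P} \times_3 M = \widehat{\mathcal{P}}$.

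A short induction on $s$ using associativity of $*_M$ and the identity $MM^+ = I_q$ (to collapse each intermediate $\times_3 M^+ \times_3 M$) then gives
\begin{equation*}
\mathcal{P}^{*_M s} = \widehat{\mathcal{P}}^{\triangle s} \times_3 M^+ \qquad \text{for every } s \geq 1.
\end{equation*}
Since $\triangle$ acts slice-by-slice and subblock extraction commutes with $\times_3 M^+$, Remark \ref{rmk:matrix_eval_one-one} forces the top-left $n \times n \times p$ subblock of $\mathcal{P}^{*_M s}$ to vanish for $1 \leq s < d$ and to equal $(\widehat{\mathcal{A}} \triangle \widehat{\mathcal{B}}) \times_3 M^+ = \mathcal{A} *_M \mathcal{B}$ when $s = d$. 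Hence the top-left block of $f_{dn}(\mathcal{P}, \ldots, \mathcal{P})$ equals $\mathcal{A} *_M \mathcal{B} + \mathcal{C}$ for a fixed constant tensor $\mathcal{C}$ depending only on $d$ and $f$, and one recovers $\mathcal{A} *_M \mathcal{B}$ from a single evaluation of $f_{dn}$ by subtracting $\mathcal{C}$. The most delicate point will be the power identity itself: each application of $*_M$ inserts a $\times_3 M^+ \times_3 M$ pair that must be annihilated via $MM^+ = I_q$, which is precisely why the argument breaks down for injective $M$, consistent with the failure of associativity already documented in Example \ref{ex:InjectiveNotAssociative}.
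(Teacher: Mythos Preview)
Your proposal is correct and follows essentially the same approach as the paper: the second equality is obtained exactly as you say by chaining Theorems \ref{thm:AsymptoticExponentRank} and \ref{thm:AsymptoticExponentTotal}, and for the first equality the paper simply asserts that ``the $*_M$-product can be performed block-wisely'' so that the matrix argument for \cite[Theorem 5.1]{MR4144013} carries over verbatim. Your explicit construction via the hat domain (forming $\widehat{\mathcal{P}}$ slice-wise and pulling back by $M^+$, then using $MM^+=I_q$ to verify $\mathcal{P}^{*_M s}=\widehat{\mathcal{P}}^{\triangle s}\times_3 M^+$) is precisely a fleshed-out version of that one-line claim, so the two proofs coincide in substance.
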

\begin{proof}
    Since the $*_M$-product can be performed block-wisely, then the first equality is proved similarly as in the matrix polynomial evaluation problem above. In addition, the second equality holds by Theorems \ref{thm:AsymptoticExponentTotal} and \ref{thm:AsymptoticExponentRank}.
\end{proof}

\section{Non-algebraic tensor functions}\label{Sec:non-algebraic}

\subsection{Tensor means under the $*_M$-product}\label{Sec:means}
For two arbitrary third-order tensors $\mathcal{A},\mathcal{B}\in\mathbb{K}^{n\times n\times p}$, its \emph{arithmetic mean} is
\begin{equation*}
    \frac{\mathcal{A}+\mathcal{B}}{2}.
\end{equation*}
This definition has been easily generalized from the matrix arithmetic mean. However, in order to generalize some other matrix means, we need the multiplication operation for tensors. 

Let $A,B$ be $n \times n$ Hermitian positive-definite matrices. Then the \emph{geometric mean of $A$ and $B$}, denoted by $A\#B$, is defined as 
\begin{equation*}
    A \# B=A^{\frac{1}{2}}(A^{-\frac{1}{2}}BA^{-\frac{1}{2}})^{\frac{1}{2}}A^{\frac{1}{2}}.
\end{equation*}
Note that it is defined by using matrix multiplication, inverse, and square root. In addition, the \emph{Wasserstein mean of $A$ and $B$}, denoted by $A \diamond B$, is defined as
\begin{equation*}
    A \diamond B=\frac{1}{4}(A+B+(AB)^{\frac{1}{2}}+(BA)^{\frac{1}{2}}).
\end{equation*}
It also requires matrix multiplication and square root. In fact, both the geometric mean and the Wasserstein mean can be defined as the midpoint of the geodesic connecting $A$ and $B$ on Riemannian manifolds of $n \times n$ Hermitian positive-definite matrices \cite{MR2198952,MR3992484}.

In \cite{ju2024}, the authors introduced the notion of geometric mean for T-positive-definite tensors. The goal of this section is to generalize this concept by defining the geometric mean of two tensors under the family of $*_M$-products, and furthermore, to introduce the concept of Wasserstein mean for two tensors under the $*_M$-product. In Example~\ref{ex:CounterexampleResultant} and \ref{ex:hyper} we show how the determinantal property, which is satisfied for the matrix geometric mean, is not true for the tensor case. 

\subsubsection{Positive definiteness and pseudo-positive-definiteness}\label{Sec:PD_PSEUDOPD}

In order to define the desired tensor means, first we need to introduce some necessary concepts for their construction, such as the positive definiteness for third-order tensors with respect to the $*_M$-product. The authors of \cite{MR3394164} equipped $\mathbb{K}^{m\times 1\times p}$, the space of lateral slices, with an inner product under which they were able to define a norm. 

\begin{definition}
Let $M\in\mathbb{K}^{q\times p}$ be of full rank, and let $\mathcal{A},\mathcal{B}\in\mathbb{K}^{m\times 1\times p}$. The \emph{$*_M$-inner product of $\mathcal{A}$ and $\mathcal{B}$}, denoted by $\langle \mathcal{A},\mathcal{B} \rangle_{*_M}$, is defined as
   \begin{equation*}
       \langle \mathcal{A},\mathcal{B} \rangle_{*_M}= \mathcal{B}^H*_M\mathcal{A},
   \end{equation*}
which lives in $\mathbb{K}^{1\times 1\times p}$.
\end{definition}

Given the presence of this inner product, it is natural to attempt a generalization of the concept of matrix positive definiteness to that of tensors under the $*_M$-product. We provide this generalized definition, as well as the problems that this generalization creates, and finally a second definition on which the theory of tensor means will be constructed. This second attempt will be defined as the slice-wise positive definiteness in the image of $\times_3 M$ and, when $M$ is the discrete Fourier transformation, it coincides with the definition of T- positive definiteness under the inner product introduced in \cite{MR4198732}. See Remark \ref{rem:implications}.

The first attempt at the generalization of the definition of positive definiteness is as follows.

\begin{definition}\label{def:PD}
Let $M\in\mathbb{K}^{q\times p}$ be of full rank. Then $\mathcal{A}\in\mathbb{K}^{n\times n \times p}$ is said to be \emph{$*_M$-positive-definite} if
\begin{itemize}
    \item [(\romannumeral1)] $\mathcal{A}$ is $*_M$-Hermitian,
    \item [(\romannumeral2)] $\langle \mathcal{X},\ \mathcal{A}*_M \mathcal{X}\rangle_{*_{M}}> 0$ for all $\mathcal{X}\in\mathbb{K}^{n\times 1 \times p}\setminus \{\mathcal{O}\}$,
\end{itemize}
where $\mathcal{O}$ denotes the zero tensor, and the inequality on $\mathbb{K}^{1\times 1 \times p}$ is defined as
\begin{equation*}
    \left[\begin{array}{c|c|c}
     y_1 & \cdots & y_p
\end{array}\right]>0 \Leftrightarrow y_i\geq0~\text{for all}~i \in [p]~\text{and there is}~j \in [p]~\text{such that}~y_j>0.
\end{equation*}
\end{definition}

\begin{remark}
    The definition of the inequality on $\mathbb{K}^{1 \times 1 \times p}$ is reasonable, because it is possible for no $*_M$-positive-definite tensor to exist if we define the inequality as 
    \begin{equation}\label{eq:WrongInequalityCondition}
    \left[\begin{array}{c|c|c}
     y_1 & \cdots & y_p
\end{array}\right]>0 \Leftrightarrow y_i>0~\text{for all}~i \in [p].
    \end{equation}
    Even for the case when $M=I_2$, the tensor $\mathcal{A}=\left[~\mathcal{A}^{(1)}~\middle|~\mathcal{A}^{(2)}~\right] \in \mathbb{K}^{2 \times 2 \times 2}$, with
    \begin{equation*}
        \mathcal{A}^{(1)}=\begin{bmatrix}
            2 & -1\\
            -1 & 2
        \end{bmatrix},~ \mathcal{A}^{(2)}=\begin{bmatrix}
            2 & 0\\
            0 & 2
        \end{bmatrix},
    \end{equation*}
    does not satisfy the condition in (\ref{eq:WrongInequalityCondition}), since 
    \begin{equation*}
        \mathcal{X}=\left[
        \begin{array}{c|c}
             1 & 0 \\
             1 & 0
        \end{array}
        \right]
    \end{equation*}
    satisfies $\langle \mathcal{X},\mathcal{A}*_M \mathcal{X}\rangle_{*_M}=[~2~|~0~]$. That is, the condition in (\ref{eq:WrongInequalityCondition}) is difficult to satisfy even when $M=I_n$ and $\mathcal{A}$ consists of positive-definite frontal slices. 
\end{remark}

We will not adopt this definition when developing a theory of tensor means, because of the non-existence and non-$*_M$-positive-definiteness of the square root of $*_M$-positive-definite tensors. 

\begin{definition}
    Let $M\in\mathbb{K}^{q\times p}$ be of full rank, and let $\mathcal{A},\mathcal{B}\in\mathbb{K}^{n\times n \times p}$. $\mathcal{B}$ is called a \emph{square root of $\mathcal{A}$} if $\mathcal{B}*_M\mathcal{B}=\mathcal{A}$. 
\end{definition} 

\begin{example}\label{ex:NoRealSquareRootPositiveDefinite}
    Let $M=\begin{bmatrix}
        0 & 1\\
        -1 & 1
    \end{bmatrix}$ (invertible) so that $M^{-1}=\begin{bmatrix}
        1 & -1\\
        1 & 0
    \end{bmatrix}$.  Take the real tensor
    \begin{equation*}
        \mathcal{A}=\left[\begin{array}{cc|cc}
             2& 0& 1 & 0  \\
             0 & 2& 0 & 1
        \end{array}
        \right].
    \end{equation*}
    It is easy to check that $\mathcal{A}$ is $*_M$-positive-definite. Now, let us calculate its square root. Note that
    \begin{equation*}
        \mathcal{A} \times_3 M=\left[\begin{array}{cc|cc}
             1& 0& -1 & 0  \\
             0 & 1& 0 & -1
        \end{array}
        \right].
    \end{equation*} 
    If $\mathcal{B} \in \mathbb{K}^{2 \times 2 \times 2}$ is a square root of $\mathcal{A}$, then $\mathcal{B} \times_3 M$ must be a square root of $\mathcal{A} \times_3 M$ under the face-wise product. However, the second frontal slice of $\mathcal{A} \times_3 M$ has the unique square root
    \begin{equation*}
    \begin{bmatrix}
        i & 0\\
        0 & i
    \end{bmatrix}
    \end{equation*}
    which is neither real nor Hermitian. That is, $\mathcal{A}$ is a $*_M$-positive-definite real tensor without a real and Hermitian (symmetric) square root. 
\end{example}

Note that every positive-definite matrix has a unique Hermitian positive-definite square root, and this property is widely used in the theory of matrix means. This is why we will not adopt the $*_M$-positive-definiteness. Instead, we will use the following notion.

\begin{definition}
    Let $M\in\mathbb{K}^{q\times p}$ be of full rank. A tensor $\mathcal{A}\in\mathbb{K}^{n\times n\times p}$ is said to be \emph{$*_M$-pseudo-positive-definite} if the frontal slices $(\mathcal{A}\times_3 M)^{(i)}$ are Hermitian positive-definite for all $i\in [q]$.
\end{definition}

In the case of invertible or surjective maps $M$, a $*_M$-pseudo-positive-definite tensor always exists, since $\times_3 M:\mathbb{K}^p \rightarrow \mathbb{K}^q$ is either invertible or surjective and so $\times_3 M:\mathbb{K}^{n \times n \times p} \rightarrow \mathbb{K}^{n \times n \times q}$ is also either invertible or surjective. However, for injective $M$, the existence of a $*_M$-pseudo-positive-definite tensor is not guaranteed.

\begin{example}\label{ex:InjectiveNoPD}
     Let $M=\begin{bmatrix} 1 \\ -1\end{bmatrix}$. Then, for every  $A\in\mathbb{K}^{n\times n\times 1}$ (corresponding to an $n \times n$ matrix),
     \begin{equation*}
         A \times_3 M=\left[~A~\middle|~-A~\right].
     \end{equation*}
     When $A$ is positive-definite, $-A$ must be negative-definite. Hence, $M$ does not admit a $*_M$-pseudo-positive-definite tensor in this case.
\end{example}

There is no identity tensor in $\mathbb{K}^{n \times n \times p}$ in the case of the injective matrix $M$ in Example \ref{ex:InjectiveNoPD}. However, if $M$ admits $\left[\begin{array}{c|c|c|c}
     1 & 1 & \cdots & 1
\end{array}\right]$ in the image of $\times_3 M:\mathbb{K}^{1 \times 1 \times p} \rightarrow \mathbb{K}^{1 \times 1 \times q}$, then an identity tensor exists (see Remark \ref{rmk:RemarkIdentityTensor}), which is $*_M$-pseudo-positive-definite by definition, and so a $*_M$-pseudo-positive-definite tensor exists. 

\begin{lemma}\label{lemma:invertible}
Let $M\in\mathbb{K}^{q\times p}$ be either invertible or surjective. If $\mathcal{A} \in\mathbb{K}^{n\times n \times p}$ is $*_M$-pseudo-positive-definite, then $\mathcal{A}$ is invertible. 
\end{lemma}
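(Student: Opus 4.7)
My plan is to construct the inverse of $\mathcal{A}$ explicitly by inverting frontal-slice-by-frontal-slice in the ``$\times_3 M$'' picture, and then pulling back through $M^+$ (which equals $M^{-1}$ in the invertible case). The key enabling fact is that $MM^+ = I_q$ in both the invertible and surjective cases, so the face-wise inverse lifts cleanly to a genuine $*_M$-inverse.

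Concretely, I would proceed as follows. First, since $\mathcal{A}$ is $*_M$-pseudo-positive-definite, every frontal slice $(\mathcal{A}\times_3 M)^{(i)}$ is Hermitian positive-definite for $i\in[q]$, hence invertible as an $n\times n$ matrix. Define $\mathcal{Y} \in \mathbb{K}^{n\times n\times q}$ by
\begin{equation*}
\mathcal{Y}^{(i)} = \bigl((\mathcal{A}\times_3 M)^{(i)}\bigr)^{-1}, \qquad i\in[q],
\end{equation*}
and set $\mathcal{X} := \mathcal{Y}\times_3 M^+ \in \mathbb{K}^{n\times n\times p}$. Using $MM^+=I_q$ together with the identity $(\mathcal{T}\times_3 A)\times_3 B = \mathcal{T}\times_3(BA)$ from the Preliminaries, I get
\begin{equation*}
\mathcal{X}\times_3 M \;=\; (\mathcal{Y}\times_3 M^+)\times_3 M \;=\; \mathcal{Y}\times_3(MM^+) \;=\; \mathcal{Y}\times_3 I_q \;=\; \mathcal{Y}.
\end{equation*}

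Next I verify that $\mathcal{X}$ is the desired inverse. By the definition of the $*_M$-product,
\begin{equation*}
\mathcal{A}*_M\mathcal{X} \;=\; \bigl((\mathcal{A}\times_3 M)\triangle(\mathcal{X}\times_3 M)\bigr)\times_3 M^+ \;=\; \bigl((\mathcal{A}\times_3 M)\triangle \mathcal{Y}\bigr)\times_3 M^+.
\end{equation*}
For each $i\in[q]$, the $i$-th frontal slice of $(\mathcal{A}\times_3 M)\triangle \mathcal{Y}$ is $(\mathcal{A}\times_3 M)^{(i)}\mathcal{Y}^{(i)} = I_n$; since a one-sided inverse of a square matrix is two-sided, the same computation with the order swapped gives $(\mathcal{X}\times_3 M)\triangle(\mathcal{A}\times_3 M) = \widehat{\mathcal{I}}$, where $\widehat{\mathcal{I}}\in\mathbb{K}^{n\times n\times q}$ has all frontal slices equal to $I_n$. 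Setting $\mathcal{I} := \widehat{\mathcal{I}}\times_3 M^+$, one checks $\mathcal{I}\times_3 M = \widehat{\mathcal{I}}\times_3(MM^+) = \widehat{\mathcal{I}}$, so $\mathcal{I}$ qualifies as a $*_M$-identity tensor, and
\begin{equation*}
\mathcal{A}*_M\mathcal{X} \;=\; \widehat{\mathcal{I}}\times_3 M^+ \;=\; \mathcal{I} \;=\; \mathcal{X}*_M\mathcal{A}.
\end{equation*}

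I do not anticipate a serious obstacle here: the argument is essentially a fiber-wise construction made legitimate by the right-invertibility $MM^+=I_q$, which is exactly what distinguishes the invertible and surjective cases (and is precisely where the injective case would fail, cf.\ Example \ref{ex:InjectiveNoPD}). The only mild subtlety worth flagging is that in the surjective case the $*_M$-identity is not unique; but since the definition of $*_M$-invertibility only asks for the existence of \emph{some} identity witnessing the equations, this is immaterial, and the $\mathcal{I}$ produced above serves that role.
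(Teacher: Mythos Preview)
Your proposal is correct and follows essentially the same approach as the paper: invert each frontal slice of $\mathcal{A}\times_3 M$ and use the surjectivity/invertibility of $\times_3 M$ (equivalently, $MM^+=I_q$) to pull the stack of inverses back to $\mathbb{K}^{n\times n\times p}$. The paper's proof is a terse two-sentence version of exactly this construction, whereas you spell out the explicit formula $\mathcal{X}=\mathcal{Y}\times_3 M^+$ and verify the identity equations directly.
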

\begin{proof}
If $\mathcal{A}\in\mathbb{K}^{n\times n \times p}$ is $*_M$-pseudo-positive-definite, then all slices of $\mathcal{A} \times_3 M$ are positive-definite and so invertible. In addition, the invertibility or surjectivity of $M$ guarantees the existence of the stack of inverses in the image of $\times_3 M$ in $\mathbb{K}^{n \times n \times q}$.
\end{proof}

If $M$ is injective, then this lemma does not hold in general, even if the identity tensor exists.

\begin{example}\label{ex:InjectivePositiveDefiniteNonInvertible}
    Let $M=\begin{bmatrix}
        1 & 0 \\
        0 & 1\\
        1 & 1
    \end{bmatrix}$. Then 
    \begin{equation*}
        \mathcal{A}=\left[\begin{array}{cc|cc}
            1 & 0 & 1 & 0  \\
            0 & 1 & 0 & 1
        \end{array}\right]
    \end{equation*}
    is $*_M$-pseudo-positive-definite, because
    \begin{equation*}
        \mathcal{A}\times_3 M=\left[\begin{array}{cc|cc|cc}
            1 & 0 & 1 & 0 & 2 & 0  \\
            0 & 1 & 0 & 1 & 0 &2
        \end{array}\right].
    \end{equation*}
    If $\mathcal{A}$ had an inverse $\mathcal{A}^{-1}$, then 
    \begin{equation*}
        \mathcal{A}^{-1}\times_3 M=\left[\begin{array}{cc|cc|cc}
            1 & 0 & 1 & 0 & \frac{1}{2} & 0  \\
            0 & 1 & 0 & 1 & 0 &\frac{1}{2}
        \end{array}\right].
    \end{equation*}
    However, if the first two frontal slices of $\mathcal{A}\times_3 M$ are $I_2$, then the third frontal slice must be $2I_2$, and not $\frac{1}{2}I_2$. Therefore, $\mathcal{A}$ does not have an inverse. 
\end{example}

Because of Examples \ref{ex:InjectiveNoPD} and \ref{ex:InjectivePositiveDefiniteNonInvertible}, we will not deal with the injective case for the rest of this section.

\begin{lemma}\label{lemma:sqrt}
Let $M\in\mathbb{K}^{q\times p}$ be either invertible or surjective. If $\mathcal{A}\in\mathbb{K}^{n\times n \times p}$ is $*_M$-pseudo-positive-definite, then there exists a $*_M$-pseudo-positive-definite $k$-th root of $\mathcal{A}$ for every positive integer $k$. In particular, if $M$ is invertible, then such $k$-th root is uniquely determined. 
\end{lemma}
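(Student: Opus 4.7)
The plan is to reduce the $k$-th-root problem to the matrix level via the face-wise structure of $*_M$, and then lift the face-wise root back to $\mathbb{K}^{n\times n\times p}$ using $M^{-1}$ or $M^{+}$.

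First I would transfer the problem through $\times_3 M$. By definition of $*_M$-pseudo-positive-definiteness, the matrices $A_i:=(\mathcal{A}\times_3 M)^{(i)}$, $i\in[q]$, are Hermitian positive-definite, and the spectral theorem supplies a unique Hermitian positive-definite $k$-th root $S_i$ for each. Assemble them into $\widehat{\mathcal{B}}\in\mathbb{K}^{n\times n\times q}$ with $\widehat{\mathcal{B}}^{(i)}=S_i$, and define the candidate root
\begin{equation*}
\mathcal{B}:=\widehat{\mathcal{B}}\times_3 M^{+}
\end{equation*}
(with the convention $M^{+}=M^{-1}$ when $M$ is invertible). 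Since $MM^{+}=I_q$ holds in both the invertible and surjective cases, one gets $\mathcal{B}\times_3 M=\widehat{\mathcal{B}}\times_3(MM^{+})=\widehat{\mathcal{B}}$, whose frontal slices are the Hermitian positive-definite $S_i$. Hence $\mathcal{B}$ is $*_M$-pseudo-positive-definite.

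Next I would verify that the iterated $*_M$-power $\mathcal{B}^{k}$ equals $\mathcal{A}$ by induction on $k$. Unfolding $\mathcal{B}^{\ell+1}=\mathcal{B}^{\ell}*_M\mathcal{B}$ and again using $MM^{+}=I_q$, one finds
\begin{equation*}
(\mathcal{B}^{\ell+1})\times_3 M=\bigl((\mathcal{B}^{\ell})\times_3 M\bigr)\triangle\,\widehat{\mathcal{B}},
\end{equation*}
so the $i$-th frontal slice of $(\mathcal{B}^{k})\times_3 M$ is $S_i^{k}=A_i$, yielding $(\mathcal{B}^{k})\times_3 M=\mathcal{A}\times_3 M$. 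When $M$ is invertible, $\times_3 M$ is a bijection and we conclude $\mathcal{B}^{k}=\mathcal{A}$. Uniqueness in the invertible case then follows from the uniqueness of the Hermitian positive-definite matrix $k$-th root: any other pseudo-pd $k$-th root $\mathcal{B}'$ must satisfy $(\mathcal{B}'\times_3 M)^{(i)}=S_i$ for all $i$, so $\mathcal{B}'\times_3 M=\widehat{\mathcal{B}}=\mathcal{B}\times_3 M$, and invertibility of $M$ forces $\mathcal{B}'=\mathcal{B}$.

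The main delicate point is the surjective case, in which $M^{+}M\neq I_p$ and consequently the identity $(\mathcal{B}^{k})\times_3 M=\mathcal{A}\times_3 M$ does not a priori upgrade to $\mathcal{B}^{k}=\mathcal{A}$ inside all of $\mathbb{K}^{n\times n\times p}$. The structural fact needed here, immediate from the definition of $*_M$, is that any output of a $*_M$-product lies in $\mathrm{Im}(\times_3 M^{+})$; in particular $\mathcal{B}^{k}$ belongs to this subspace, as does the canonical representative $(\mathcal{A}\times_3 M)\times_3 M^{+}$ of $\mathcal{A}$. Since $M^{+}MM^{+}=M^{+}$, two elements of $\mathrm{Im}(\times_3 M^{+})$ that agree after $\times_3 M$ must coincide, so $\mathcal{B}^{k}$ equals the canonical $*_M$-representative of $\mathcal{A}$, which is the sense in which the $k$-th-root relation is understood in the ring-theoretic framework of Proposition \ref{prop:SurjectiveCase}.
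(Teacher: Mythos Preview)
Your proof takes essentially the same route as the paper's---face-wise matrix $k$-th roots of $(\mathcal{A}\times_3 M)^{(i)}$ pulled back through $M^{-1}$ or $M^{+}$---and your invertible-case argument (existence via $MM^{+}=I_q$, the inductive verification of $(\mathcal{B}^{k})\times_3 M=\mathcal{A}\times_3 M$, and uniqueness from bijectivity of $\times_3 M$) is complete and matches the paper's, only with more detail.

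Your final paragraph actually goes further than the paper by flagging a genuine subtlety in the surjective case: since any $*_M$-product output lies in $\mathrm{Im}(\times_3 M^{+})$, your construction only yields $\mathcal{B}^{k}=(\mathcal{A}\times_3 M)\times_3 M^{+}$, which need not equal $\mathcal{A}$ itself when $\mathcal{A}\notin\mathrm{Im}(\times_3 M^{+})$. The paper's terse proof (``the surjectivity of $M$ guarantees the existence of the stack of $k$-th roots in the image of $\times_3 M$ \ldots\ hence the proof for existence is done'') simply does not address this point, so your treatment is at least as complete. Your appeal to the ring-theoretic framework of Proposition~\ref{prop:SurjectiveCase} as the intended sense of the root relation is a reasonable reading, though be aware that under the paper's literal Definition of square root ($\mathcal{B}*_M\mathcal{B}=\mathcal{A}$) the equality is not established for arbitrary $*_M$-pseudo-positive-definite $\mathcal{A}$ in the surjective case; this is a wrinkle in the statement rather than in your argument.
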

\begin{proof}
If $\mathcal{A}\in\mathbb{K}^{n\times n \times p}$ is $*_M$-pseudo-positive-definite, then each slice of $\mathcal{A} \times_3 M$ is positive-definite and so has a unique positive-definite $k$-th root. In addition, the invertibility or surjectivity of $M$ guarantees the existence of the stack of $k$-th roots in the image of $\times_3 M$ in $\mathbb{K}^{n \times n \times q}$. Hence, the proof for the existence  is done. Furthermore, if $M$ is invertible, then both $\times_3 M,~ \times_3M^{-1}:\mathbb{K}^{n \times n \times p} \rightarrow \mathbb{K}^{n \times n \times p}$ are injective (possibly invertible, and in fact they are bijective), and so such $k$-th root is uniquely determined.
\end{proof}

If $M$ is surjective, then there can be several $*_M$-pseudo-positive-definite $k$-th roots of a $*_M$-pseudo-positive-definite tensor. See the following example.
\begin{example}\label{ex:SurjectiveSquareRootNotUnique}
    Let $M=\begin{bmatrix}
        1 & 0 & 1\\
        0 & 1 & 0
    \end{bmatrix}$ so that $M^+=\begin{bmatrix}
        1 & 0 \\ 0 & 1\\ 0 & 0
    \end{bmatrix}$. Consider 
    \begin{equation*}
        \mathcal{A}=\left[\begin{array}{cc|cc|cc}
             2 & 0 & 4 & 0 & 2 & 0  \\
             0 & 2 & 0 & 4 & 0 & 2
        \end{array}\right].
    \end{equation*}
    It is $*_M$-pseudo-positive-definite because each frontal slice of
    \begin{equation*}
        \mathcal{A} \times_3 M=\left[\begin{array}{cc|cc}
             4 & 0 & 4 & 0   \\
             0 & 4 & 0 & 4 
        \end{array}\right]
    \end{equation*}
    is positive-definite. Both
    \begin{equation*}
        \mathcal{B}=\left[\begin{array}{cc|cc|cc}
             1 & 0 & 2 & 0 & 1 & 0  \\
             0 & 1 & 0 & 2 & 0 & 1
        \end{array}\right]~\text{and}~\mathcal{B}'=\left[\begin{array}{cc|cc|cc}
             3 & 0 & 2 & 0 & -1 & 0  \\
             0 & 3 & 0 & 2 & 0 & -1
        \end{array}\right]
    \end{equation*} satisfy 
    \begin{equation*}
        \mathcal{B} \times_3 M=\mathcal{B}' \times_3 M=\left[\begin{array}{cc|cc}
             2 & 0 & 2 & 0   \\
             0 & 2 & 0 & 2
        \end{array}\right],
    \end{equation*}
    and so they are square roots of $\mathcal{A}$, but $\mathcal{B} \neq \mathcal{B}'$.
\end{example}

\begin{remark}\label{rmk:AssumptionForUniqueSquareRoot}
    Because of Examples \ref{ex:InjectiveNoPD}, \ref{ex:InjectivePositiveDefiniteNonInvertible}, and  \ref{ex:SurjectiveSquareRootNotUnique}, we will assume that $M$ is invertible when we deal with $*_M$-pseudo-positive-definiteness. In this case, we let $\mathcal{A}^{\frac{1}{2}}$ denote the uniquely determined square root of $\mathcal{A}$ (from Lemma \ref{lemma:sqrt}).
\end{remark}

We lastly discuss implications between $*_M$-positive-definiteness and $*_M$-pseudo-positive-definiteness, and then proceed to the subsection on tensor means.

\begin{remark}\label{rem:implications}
Here, we show that there are no implications between $*_M$-positive-definiteness and $*_M$-pseudo-positive-definiteness, and give a case where they are equivalent.
\begin{itemize}
    \item [(\romannumeral1)] Example \ref{ex:NoRealSquareRootPositiveDefinite} introduced an example which is $*_M$-positive-definiteness but not $*_M$-pseudo-positive-definiteness.
    \item [(\romannumeral2)] In Example \ref{ex:NoRealSquareRootPositiveDefinite}, if we take 
    \begin{equation*}
        \mathcal{B}=\left[\begin{array}{cc|cc}
             0& 0& 1 & 0  \\
             0 & 0& 0 & 1
        \end{array}\right],
    \end{equation*}
    then
    \begin{equation*}
        \mathcal{B} \times_3 M=\left[\begin{array}{cc|cc}
             1& 0& 1 & 0  \\
             0 & 1& 0 & 1
        \end{array}\right]
    \end{equation*}
    and so it is $*_M$-pseudo-positive-definite. Indeed,
    \begin{equation}\label{eq:CounterExampleImplication}
        \mathcal{X}=\left[\begin{array}{c|c}
             1& 0  \\
             0 & 0
        \end{array}\right]
    \end{equation}
    satisfies $\langle \mathcal{X},\mathcal{B}*_M \mathcal{X} \rangle_{*_M}=[-1~|~0]$, and so $\mathcal{B}$ is not $*_M$-positive-definite.
    \item [(\romannumeral3)] In the case of the T-product, i.e., in the case where $M$ is a discrete Fourier matrix, it is proved in \cite[Theorem 4.4]{MR4198732} that the two types of positive definiteness are equivalent when replacing the inner product in Definition \ref{def:PD} by the Frobenius inner product $\langle \mathcal{A},\mathcal{B}\rangle=\sum\limits_{i=1}^{m}\sum\limits_{j=1}^n\sum\limits_{k=1}^p \overline{a}_{ijk}b_{ijk}$.
    \item [(\romannumeral4)] Even when $M$ is orthogonal, the definitions of $*_M$-positive-definiteness and $*_M$-pseudo-positive-definiteness are not equivalent. Let $M=\begin{bmatrix}
        0 & 1\\
        -1 & 0
    \end{bmatrix}$ so that $M^{-1}=\begin{bmatrix}
            0 & -1\\
            1 & 0
        \end{bmatrix}$. Take
        \begin{equation*}
            \mathcal{C}=\left[\begin{array}{cc|cc}
             -1 & 0& 1 & 0  \\
             0 & -1 & 0 & 1
        \end{array}\right]
        \end{equation*} so that
        \begin{equation*}
            \mathcal{C} \times_3 M=\left[\begin{array}{cc|cc}
             1 & 0& 1 & 0  \\
             0 & 1 & 0 & 1
        \end{array}\right].
        \end{equation*}
        Hence, it is $*_M$-pseudo-positive-definite. However, (\ref{eq:CounterExampleImplication}) satisfies $\langle \mathcal{X},\mathcal{C}*_M \mathcal{X} \rangle_{*_M}=[-1~|~0]$, and so $\mathcal{C}$ is not $*_M$-positive-definite.
\end{itemize}
\end{remark}

\subsubsection{Geometric mean for pseudo-positive definite tensors}

Throughout this subsection, the matrix $M\in\mathbb{K}^{p \times p}$ is invertible unless otherwise stated. This assumption is a consequence from Remark \ref{rmk:AssumptionForUniqueSquareRoot}. We now generalize the geometric mean for tensors with respect to the $*_M$-product.

\begin{definition}
Let $\mathcal{A},\mathcal{B}\in\mathbb{K}^{n\times n \times p}$ be $*_M$-pseudo-positive-definite. The \textit{geometric mean of $\mathcal{A}$ and $\mathcal{B}$ under the $*_M$-product}, denoted by $\mathcal{A}\#_{*_M}\mathcal{B}$, is defined as
\begin{equation*}
\mathcal{A}\#_{*_M}\mathcal{B}=\mathcal{A}^{\frac{1}{2}}*_M(\mathcal{A}^{-\frac{1}{2}}*_M\mathcal{B}*_M\mathcal{A}^{-\frac{1}{2}})^{\frac{1}{2}}*_M\mathcal{A}^{\frac{1}{2}}.
\end{equation*}
\end{definition}

\begin{remark}\label{rmk:WellDefinedGM}
    It is well-defined by Lemmas \ref{lemma:invertible}, \ref{lemma:sqrt} and Remark \ref{rmk:UniquenessOfInverse}.
\end{remark}

\begin{notation}
    For $M \in \mathbb{K}^{q \times p}$ and $\mathcal{A} \in \mathbb{K}^{m \times n \times p}$, we write
    \begin{equation*}
        \widehat{\mathcal{A}}:=\mathcal{A} \times_3 M.
    \end{equation*}
\end{notation}

\begin{remark}\label{rmk:FaceWiseGeometricMean}
    The following observation is important when performing the calculations:
    \begin{equation*}
        \begin{aligned}
            &((\mathcal{A}\#_{*_M}\mathcal{B}) \times_3 M)^{(i)} \\
            &=(\mathcal{A}^{\frac{1}{2}}*_M(\mathcal{A}^{-\frac{1}{2}}*_M\mathcal{B}*_M\mathcal{A}^{-\frac{1}{2}})^{\frac{1}{2}}*_M\mathcal{A}^{\frac{1}{2}}\\
            &=\left((\widehat{\mathcal{A}})^{(i)}\right)^{\frac{1}{2}}  \left(\left((\widehat{\mathcal{A}})^{(i)}\right)^{-\frac{1}{2}}  (\widehat{\mathcal{B}})^{(i)} \left((\widehat{\mathcal{A}})^{(i)}\right)^{-\frac{1}{2}}\right)^{\frac{1}{2}}  \left((\widehat{\mathcal{A}})^{(i)}\right)^{\frac{1}{2}}\\
            &=(\widehat{\mathcal{A}})^{(i)}\# (\widehat{\mathcal{B}})^{(i)}.
        \end{aligned}
    \end{equation*}
\end{remark}

\begin{lemma}
Let $\mathcal{A}, \mathcal{B} \in \mathbb{K}^{n\times n\times p}$ be $*_M$-pseudo-positive-definite. Then their geometric mean $\mathcal{A}\#_{*_M}\mathcal{B}$ is $*_M$-pseudo-positive-definite.
\end{lemma}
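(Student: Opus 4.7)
The plan is to reduce the statement to the well-known fact that the matrix geometric mean of two Hermitian positive-definite matrices is again Hermitian positive-definite, using the face-wise characterization of $*_M$-pseudo-positive-definiteness together with the observation already recorded in Remark \ref{rmk:FaceWiseGeometricMean}.

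First, I would unfold the definition of $*_M$-pseudo-positive-definiteness: since $\mathcal{A}$ and $\mathcal{B}$ are $*_M$-pseudo-positive-definite, each frontal slice $(\widehat{\mathcal{A}})^{(i)}$ and $(\widehat{\mathcal{B}})^{(i)}$ is Hermitian positive-definite for every $i \in [p]$ (recall $q=p$ since $M$ is assumed invertible in this subsection). By Lemmas \ref{lemma:invertible} and \ref{lemma:sqrt}, the tensors $\mathcal{A}^{\frac{1}{2}}$, $\mathcal{A}^{-\frac{1}{2}}$, and $(\mathcal{A}^{-\frac{1}{2}}*_M\mathcal{B}*_M\mathcal{A}^{-\frac{1}{2}})^{\frac{1}{2}}$ are all well-defined, so the expression $\mathcal{A}\#_{*_M}\mathcal{B}$ makes sense.

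Next, I would invoke Remark \ref{rmk:FaceWiseGeometricMean}, which gives
\begin{equation*}
\bigl((\mathcal{A}\#_{*_M}\mathcal{B}) \times_3 M\bigr)^{(i)} = (\widehat{\mathcal{A}})^{(i)} \# (\widehat{\mathcal{B}})^{(i)}
\end{equation*}
for all $i \in [p]$. Since the matrix geometric mean $A \# B = A^{1/2}(A^{-1/2}BA^{-1/2})^{1/2}A^{1/2}$ of two Hermitian positive-definite matrices is itself Hermitian positive-definite (a classical result, see \cite{MR420302, MR1864051}), each right-hand side is Hermitian positive-definite. Thus every frontal slice of $(\mathcal{A}\#_{*_M}\mathcal{B}) \times_3 M$ is Hermitian positive-definite, which is exactly the definition of $\mathcal{A}\#_{*_M}\mathcal{B}$ being $*_M$-pseudo-positive-definite.

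There is really no obstacle here; the content is entirely absorbed into Remark \ref{rmk:FaceWiseGeometricMean} and the classical positive-definiteness of the matrix geometric mean. The only mild subtlety worth double-checking is that the square roots and inverses appearing in the definition of $\#_{*_M}$ behave face-wise under $\times_3 M$ (which follows because $\times_3 M$ is an algebra isomorphism with respect to the face-wise product, and positive-definite square roots and inverses are uniquely determined on each slice); this is precisely what justifies the displayed identity in the remark.
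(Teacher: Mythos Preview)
Your proof is correct and takes exactly the same approach as the paper: invoke Remark \ref{rmk:FaceWiseGeometricMean} to reduce to the classical fact that the matrix geometric mean of Hermitian positive-definite matrices is Hermitian positive-definite. The paper's proof is the one-line ``It is clear by Remark \ref{rmk:FaceWiseGeometricMean},'' and you have simply unpacked that line.
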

\begin{proof}
It is clear by Remark \ref{rmk:FaceWiseGeometricMean}. 
\end{proof}

\begin{theorem}\label{thm:RiccatiTensorEquation}
    Let $\mathcal{A},\mathcal{B}\in\mathbb{K}^{n\times n\times p}$ be $*_M$-pseudo-positive-definite. Then $\mathcal{A}\#_{*_M}\mathcal{B}$ is the unique $*_M$-pseudo-positive-definite solution of the Riccati tensor equation:
\begin{equation}\label{eq:RiccatiTensorEquation}
\mathcal{X}*_M\mathcal{A}^{-1}*_M\mathcal{X}=\mathcal{B}.
\end{equation}
\end{theorem}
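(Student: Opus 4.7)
The plan is to reduce the tensor Riccati equation to $p$ independent matrix Riccati equations on the frontal slices after applying $\times_3 M$, and then invoke the known matrix result. Since $M$ is invertible, the mode-$3$ product $\times_3 M : \mathbb{K}^{n \times n \times p} \to \mathbb{K}^{n \times n \times p}$ is a bijection, so solving the equation in $\mathcal{X}$ is equivalent to solving it in $\widehat{\mathcal{X}}$. Moreover, by definition of the $*_M$-product, the $i$-th frontal slice of $(\mathcal{X}*_M\mathcal{A}^{-1}*_M\mathcal{X}) \times_3 M$ is precisely $(\widehat{\mathcal{X}})^{(i)} ((\widehat{\mathcal{A}})^{(i)})^{-1} (\widehat{\mathcal{X}})^{(i)}$, where $((\widehat{\mathcal{A}})^{(i)})^{-1}$ equals $(\widehat{\mathcal{A}^{-1}})^{(i)}$ because $\mathcal{A}*_M\mathcal{A}^{-1}=\mathcal{I}$ is equivalent frontal-slice-wise to $(\widehat{\mathcal{A}})^{(i)} (\widehat{\mathcal{A}^{-1}})^{(i)} = I_n$ (cf. Lemma \ref{lemma:invertible}).

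Consequently, equation \eqref{eq:RiccatiTensorEquation} is equivalent to the collection of classical matrix Riccati equations
\begin{equation*}
(\widehat{\mathcal{X}})^{(i)} \bigl((\widehat{\mathcal{A}})^{(i)}\bigr)^{-1} (\widehat{\mathcal{X}})^{(i)} = (\widehat{\mathcal{B}})^{(i)}, \qquad i \in [p],
\end{equation*}
and the $*_M$-pseudo-positive-definiteness of $\mathcal{X}$ is equivalent to each $(\widehat{\mathcal{X}})^{(i)}$ being Hermitian positive-definite. Since $(\widehat{\mathcal{A}})^{(i)}$ and $(\widehat{\mathcal{B}})^{(i)}$ are Hermitian positive-definite, I will invoke the classical fact that the unique Hermitian positive-definite solution of the matrix Riccati equation $XA^{-1}X = B$ is the matrix geometric mean $A \# B$ (see, e.g., \cite{MR1864051}).

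Therefore each $(\widehat{\mathcal{X}})^{(i)}$ is forced to equal $(\widehat{\mathcal{A}})^{(i)} \# (\widehat{\mathcal{B}})^{(i)}$, which by Remark \ref{rmk:FaceWiseGeometricMean} is exactly $((\mathcal{A}\#_{*_M}\mathcal{B}) \times_3 M)^{(i)}$. Applying $\times_3 M^{-1}$ to recover $\mathcal{X}$ from $\widehat{\mathcal{X}}$ then gives $\mathcal{X} = \mathcal{A}\#_{*_M}\mathcal{B}$, establishing both existence and uniqueness. The only step requiring care is the equivalence of the tensor equation with the slice-wise matrix equations, but this follows immediately from the definition of $*_M$ together with the bijectivity of $\times_3 M$; once this is in place, the substantive content is entirely the classical matrix Riccati result, so I do not anticipate any genuine obstacle.
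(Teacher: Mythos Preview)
Your proposal is correct and follows essentially the same approach as the paper: both reduce the tensor Riccati equation slice-wise via $\times_3 M$ to the classical matrix Riccati equation $XA^{-1}X=B$, invoke the known fact that $A\#B$ is its unique Hermitian positive-definite solution, and then use Remark~\ref{rmk:FaceWiseGeometricMean} to identify the solution with $\mathcal{A}\#_{*_M}\mathcal{B}$. The paper's proof is simply terser, declaring the result ``obvious'' from the remark and the matrix theorem, whereas you spell out the bijectivity of $\times_3 M$ and the slice-wise equivalence explicitly.
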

\begin{proof}
It is well known that, for $n \times n$ Hermitian positive-definite matrices $A$ and $B$, their geometric mean $A \# B$ is the unique Hermitian positive-definite solution of the Riccati matrix equation \cite{MR1864051}:
\begin{equation*}
    XA^{-1}X=B.
\end{equation*}
Thus, the assertion is obvious by Remark \ref{rmk:FaceWiseGeometricMean}.
\end{proof}

If we perform $\times_3 M$ on the both sides of the Riccati tensor equation (\ref{eq:RiccatiTensorEquation}), then it is easy to notice that this equation is equivalent to the following system of Riccati matrix equations
\begin{equation*}
\begin{cases}
    \widehat{\mathcal{X}}^{(1)} {\left(\widehat{\mathcal{A}}^{(1)}\right)}^{-1} \widehat{\mathcal{X}}^{(1)}&=\widehat{\mathcal{B}}^{(1)},\\
    &\vdots\\
    \widehat{\mathcal{X}}^{(p)} {\left(\widehat{\mathcal{A}}^{(p)}\right)}^{-1} \widehat{\mathcal{X}}^{(p)}&=\widehat{\mathcal{B}}^{(p)}.\end{cases}
\end{equation*}
If we find the Hermitian positive-definite solutions $\widehat{X}^{(i)}$ by using numerical methods, then we can easily obtain $\mathcal{X}=\widehat{\mathcal{X}} \times_3 M^{-1}$. We refer \cite{MR2896454} for numerical methods to find solutions of Riccati matrix equations.

Recall that the space $H^{n \times n}_{++}$ of $n \times n$ Hermitian positive-definite matrices is a smooth manifold, and its tangent space at a point $P \in H^{n \times n}_{++}$ is the space $H^{n \times n}$ of $n \times n$ Hermitian matrices. Additionally, the inner product $\hat{g}_P$, which is locally defined by
\begin{equation*}
  \hat{g}_P(A,B)=\operatorname{tr}(P^{-1}AP^{-1}B)  
\end{equation*}
at $P \in H^{n \times n}_{++}$, gives us a Riemannian manifold $(H^{n \times n}_{++},~\hat{g})$. On this Riemannian manifold, a geodesic connecting $P,Q \in H^{n \times n}_{++}$ is uniquely determined, and the midpoint of this geodesic is the geometric mean $A \# B$ \cite{MR2198952}. 

Now, consider the spaces $\mathbb{H}^{n \times n \times p}_{++}$ of $*_M$-pseudo-positive-definite tensors of size $n \times n \times p$ and $\mathbb{H}^{n \times n \times p}$ of $*_M$-Hermitian tensors of size $n \times n \times p$. The image of $\times_3 M:\mathbb{H}^{n \times n \times p}_{++} \rightarrow \mathbb{K}^{n \times n \times p}$ is the product space of $p$-copies of $H^{n \times n}_{++}$. Since both maps $\times_3 M^{-1}$ are obviously smooth, then $\mathbb{H}^{n \times n \times p}_{++}$ is a smooth manifold. Additionally, $\times_3 M$ is smooth and linear so its differential is 
itself, and the image of $\times_3 M: \mathbb{H}^{n \times n \times p} \rightarrow \mathbb{K}^{n \times n \times p}$ is the product space of $p$-copies of $H^{n \times n}$. Therefore, it is natural to give a Riemannian metric $g$ on $\mathbb{H}^{n \times n \times p}_{++}$, where $g$ is locally defined as
\begin{equation*}
    g_{\mathcal{P}}(\mathcal{A},\mathcal{B})=\sum_{i=1}^p \hat{g}_{\widehat{\mathcal{P}}^{(i)}}\left((\widehat{A})^{(i)},(\widehat{B})^{(i)}\right)
\end{equation*}
at $\mathcal{P} \in \mathbb{H}^{n \times n \times p}_{++}$. Then, the Riemannian manifold $(\mathbb{H}^{n \times n \times p}_{++},~g)$ is isometric to the product space $\prod_{i=1}^p (H^{n \times n}_{++},~\hat{g})$. From this construction and Remark \ref{rmk:FaceWiseGeometricMean}, we obtain the following theorem.
\begin{theorem}\label{thm:MidpointGeodesic}
    For any $\mathcal{A},\mathcal{B}\in (\mathbb{H}^{n\times n\times p}_{++},~g)$, $\mathcal{A}\#_{*_M}\mathcal{B}$ is the midpoint of the uniquely determined geodesic connecting $\mathcal{A}$ and $\mathcal{B}$.
\end{theorem}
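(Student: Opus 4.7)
The plan is to exploit the isometry just constructed between $(\mathbb{H}^{n\times n\times p}_{++},\,g)$ and the product Riemannian manifold $\prod_{i=1}^{p}(H^{n\times n}_{++},\,\hat{g})$ via the map $\times_{3} M$. Under this isometry, a tensor $\mathcal{P}\in \mathbb{H}^{n\times n\times p}_{++}$ corresponds to the $p$-tuple of its transformed frontal slices $\bigl(\widehat{\mathcal{P}}^{(1)},\ldots,\widehat{\mathcal{P}}^{(p)}\bigr)$, each of which is an ordinary Hermitian positive-definite matrix. All geodesic/midpoint questions on $(\mathbb{H}^{n\times n\times p}_{++},\,g)$ can then be transported to the product side.

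First I would invoke the standard fact from Riemannian geometry that geodesics on a product Riemannian manifold are precisely products of geodesics on each factor; in particular, the unique geodesic between two points is obtained componentwise, and so is its midpoint. Applied to the present setting, the unique geodesic in $(\mathbb{H}^{n\times n\times p}_{++},\,g)$ connecting $\mathcal{A}$ and $\mathcal{B}$ corresponds under the isometry to the $p$-tuple of unique geodesics in $(H^{n\times n}_{++},\,\hat{g})$ connecting $\widehat{\mathcal{A}}^{(i)}$ to $\widehat{\mathcal{B}}^{(i)}$ for $i\in[p]$. This reduces the theorem to a purely componentwise statement.

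Next I would appeal to the classical result (cited via \cite{MR2198952}) that in $(H^{n\times n}_{++},\,\hat{g})$ the midpoint of the unique geodesic joining positive-definite matrices $A$ and $B$ equals the matrix geometric mean $A\#B$. Combined with the previous paragraph, the midpoint of the unique geodesic in $(\mathbb{H}^{n\times n\times p}_{++},\,g)$ joining $\mathcal{A}$ and $\mathcal{B}$ corresponds, via $\times_{3}M$, to the tuple $\bigl(\widehat{\mathcal{A}}^{(1)}\#\widehat{\mathcal{B}}^{(1)},\ldots,\widehat{\mathcal{A}}^{(p)}\#\widehat{\mathcal{B}}^{(p)}\bigr)$.

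Finally I would close the loop using the face-wise identity from Remark \ref{rmk:FaceWiseGeometricMean}, which states $\bigl((\mathcal{A}\#_{*_M}\mathcal{B})\times_{3}M\bigr)^{(i)} = \widehat{\mathcal{A}}^{(i)}\#\widehat{\mathcal{B}}^{(i)}$ for every $i\in[p]$. This is exactly the image of $\mathcal{A}\#_{*_M}\mathcal{B}$ under the isometry, so $\mathcal{A}\#_{*_M}\mathcal{B}$ must coincide with the midpoint of the unique geodesic connecting $\mathcal{A}$ and $\mathcal{B}$. There is no real obstacle: the work has been front-loaded into the isometry construction and into the face-wise compatibility of $\#_{*_M}$, and the only external ingredient is the well-known product-manifold fact on geodesics together with the matrix case.
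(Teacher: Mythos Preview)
Your proposal is correct and matches the paper's approach: the paper simply states that the theorem follows ``from this construction and Remark~\ref{rmk:FaceWiseGeometricMean},'' and you have spelled out exactly those ingredients --- the isometry to the product manifold, the componentwise geodesic/midpoint fact, the matrix case from \cite{MR2198952}, and the face-wise identity. There is nothing to add or correct.
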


A geodesic in a Riemannian manifold is parametrized by $t \in [0,1]$. In particular, it has been proved in \cite{MR2198952} that the geodesic from $A\in H^{n \times n}_{++}$ to $B \in H^{n \times n}_{++}$ is
\begin{equation*}
    \gamma: t \in [0,1] \mapsto A^{\frac{1}{2}}(A^{-\frac{1}{2}}BA^{-\frac{1}{2}})^{t}A^{\frac{1}{2}} \in H^{n \times n}_{++}.
\end{equation*}
From this fact, the \emph{weighted geometric mean}
\begin{equation*}
    A \#_t B:=A^{\frac{1}{2}}(A^{-\frac{1}{2}}BA^{-\frac{1}{2}})^{t}A^{\frac{1}{2}}
\end{equation*}
was defined for a fixed $t \in [0,1]$. From the construction above, we also obtain that 
\begin{equation*}
    \Gamma: t\in [0,1] \mapsto \mathcal{A}^{\frac{1}{2}}*_M(\mathcal{A}^{-\frac{1}{2}}*_M\mathcal{B}*_M\mathcal{A}^{-\frac{1}{2}})^{t}*_M\mathcal{A}^{\frac{1}{2}} \in \mathbb{H}^{n \times n \times p}_{++}
\end{equation*}
is the geodesic from $\mathcal{A} \in \mathbb{H}^{n \times n \times p}_{++}$ to $\mathcal{B} \in \mathbb{H}^{n \times n \times p}_{++}$. Therefore, for a fixed $t \in [0,1]$, it is natural to define the \emph{weighted geometric mean of $\mathcal{A},\mathcal{B} \in \mathbb{H}^{n \times n \times p}_{++}$ under the $*_M$-product} by
\begin{equation*}
    \mathcal{A} \#_{*_M,t} \mathcal{B}:=\mathcal{A}^{\frac{1}{2}}*_M(\mathcal{A}^{-\frac{1}{2}}*_M\mathcal{B}*_M\mathcal{A}^{-\frac{1}{2}})^{t}*_M\mathcal{A}^{\frac{1}{2}}.
\end{equation*}
Furthermore, the multivariate geometric mean (i.e., Karcher mean) for more than two $*_M$-pseudo-positive-definite tensors can be defined as the barycenter of those tensors in the Riemannian manifold.

The properties of the matrix geometric mean have been extensively analized in literature, such as in \cite{MR3443454, MR2944035, MR563399, MR2954480}. We condense some of the properties for the geometric mean of matrices in the following lemma.

\begin{lemma}
    Let $A,B\in H^{n\times n}_{++}$. The matrix geometric mean $A\#B$ has the following properties:
    \begin{itemize}
        \item [(\romannumeral1)] \emph{Idempotence}: $A \# A=A$.
        \item [(\romannumeral2)] \emph{Inversion order-reversing property}: $(A\#B)^{-1}=B^{-1}\#A^{-1}$.
        \item [(\romannumeral3)] \emph{Commutativity}: $A\#B=B\#A$.
        \item [(\romannumeral4)] \emph{Congruence invariance}: $(C^H A C)\#(C^H B C)=  C^H(A\#B)C$, for any invertible $C$.
        \item [(\romannumeral5)] \emph{Joint homogeneity}: for $c>0$, $(cA)\# (cB)=c(A\#B)$.
        \item [(\romannumeral6)] \emph{Consistency with scalars}: if $a>0,b>0$, $A=aI$ and $B=bI$, then $A\#B=\sqrt{ab}I$.
        \item [(\romannumeral7)] \emph{Determinantal identity}: $\operatorname{det}(A\#B)=\sqrt{\operatorname{det}A\operatorname{det}B}$.
    \end{itemize}
\end{lemma}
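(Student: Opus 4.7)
The plan is to prove each of the seven properties by leveraging the Riccati characterization of the geometric mean: $A\#B$ is the unique Hermitian positive-definite solution $X$ of $XA^{-1}X=B$. This uniqueness lets me reduce most items to a single-line verification of the form ``plug a candidate $X$ into the Riccati equation.'' I would first handle items (i), (v), (vi) directly from the closed-form expression $A\#B=A^{1/2}(A^{-1/2}BA^{-1/2})^{1/2}A^{1/2}$, since these are immediate once the scalars pass through the square root: for (i), $A^{-1/2}AA^{-1/2}=I$; for (v), the factor $c$ cancels out of the inner bracket and a single $c$ survives; for (vi), everything is scalar so $aI$ and $bI$ commute trivially with the square root.

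For (ii), (iii), (iv), I would use the Riccati equation and uniqueness. For commutativity (iii), starting from $X=A\#B$, hence $XA^{-1}X=B$, I left-multiply by $X^{-1}$ to get $A^{-1}X=X^{-1}B$, rearrange to $XB^{-1}=AX^{-1}$, and right-multiply by $X$ to obtain $XB^{-1}X=A$; this is the defining Riccati equation for $B\#A$, so by uniqueness $A\#B=B\#A$. For (ii), taking inverses on both sides of $XA^{-1}X=B$ gives $X^{-1}AX^{-1}=B^{-1}$, which is the Riccati equation for $A^{-1}\#B^{-1}$; combined with (iii) this yields $(A\#B)^{-1}=B^{-1}\#A^{-1}$. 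For congruence (iv), I would plug $Y:=C^H(A\#B)C$ into $Y(C^HAC)^{-1}Y$, cancel $CC^{-1}$ and $C^{-H}C^H$ in the middle, and read off $C^HBC$; since $Y$ is positive-definite (because $A\#B$ is and $C$ is invertible) and solves the Riccati equation for $(C^HAC)\#(C^HBC)$, uniqueness closes the argument.

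Finally, (vii) follows by taking determinants of the Riccati equation $(A\#B)A^{-1}(A\#B)=B$: this gives $\det(A\#B)^2=\det(A)\det(B)$, and the positive square root is forced because $A\#B$ is Hermitian positive-definite. No real obstacle is expected here; the only subtle point is that the Riccati equation can have multiple Hermitian solutions a priori, so every ``by uniqueness'' step relies on the fact established in the literature (and recalled in the proof of Theorem \ref{thm:RiccatiTensorEquation}) that there is a \emph{unique} Hermitian positive-definite solution. That justification is the single nontrivial ingredient I would cite; everything else is direct algebraic manipulation.
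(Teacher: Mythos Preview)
Your proof is correct. Each of the seven verifications is sound: the closed-form computations for (i), (v), (vi) are immediate; the Riccati manipulations for (ii), (iii), (iv) are valid (your chain $XA^{-1}X=B \Rightarrow A^{-1}X=X^{-1}B \Rightarrow XB^{-1}=AX^{-1} \Rightarrow XB^{-1}X=A$ works, and the congruence check $C^HXC\,(C^HAC)^{-1}\,C^HXC=C^HBC$ cancels as claimed); and the determinant argument for (vii) is correct, with positivity of $\det(A\#B)$ selecting the right square root. You are right that the only nontrivial input is the uniqueness of the Hermitian positive-definite Riccati solution, which the paper indeed invokes elsewhere.

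The paper, however, does not prove this lemma at all: it is stated as a summary of classical facts with citations to the literature \cite{MR3443454, MR2944035, MR563399, MR2954480}, and no argument is given. So there is no ``paper's own proof'' to compare against---your proposal supplies a self-contained proof where the paper simply quotes known results. What your approach buys is that a reader need not chase references; what the paper's approach buys is brevity, since these properties are entirely standard in the matrix-mean literature and the lemma functions only as background for the tensor generalization that follows.
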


By this lemma and Remark \ref{rmk:FaceWiseGeometricMean}, we can easily prove the following proposition:
\begin{proposition}
    Let $\mathcal{A},\mathcal{B} \in \mathbb{H}^{n\times n\times p}_{++}$. The tensor geometric mean $\mathcal{A}\#_{*_M}\mathcal{B}$ under the $*_M$-product has the following properties:
    \begin{itemize}
        \item [(\romannumeral1)] \emph{Idempotence}: $\mathcal{A} \#_{*_M} \mathcal{A}=\mathcal{A}$.
        \item [(\romannumeral2)] \emph{Inversion order-reversing property}: $(\mathcal{A}\#_{*_M}\mathcal{B})^{-1}=\mathcal{B}^{-1}\#_{*_M}\mathcal{A}^{-1}$.
        \item [(\romannumeral3)] \emph{Commutativity}: $\mathcal{A}\#_{*_M}\mathcal{B}=\mathcal{B}\#_{*_M}\mathcal{A}$.
        \item [(\romannumeral4)] \emph{Congruence invariance}: $(\mathcal{C}^H *_M \mathcal{A} *_M \mathcal{C})\#_{*_M}(\mathcal{C}^H  *_M\mathcal{B} *_M \mathcal{C})= \mathcal{C}^H *_M (\mathcal{A} \#_{*_M}\mathcal{B}) *_M \mathcal{C}$, for any invertible $\mathcal{C}$.
        \item [(\romannumeral5)] \emph{Joint homogeneity}: for $c>0$, $(c\mathcal{A})\#_{*_M} (c\mathcal{B})=c(\mathcal{A}\#_{*_M}\mathcal{B})$.
        \item [(\romannumeral6)] \emph{Consistency with scalars}: If, for $a>0,b>0$ and the identity tensor $\mathcal{I}$, $\mathcal{A}=a\mathcal{I}$ and $\mathcal{B}=b\mathcal{I}$, then $\mathcal{A}\#_{*_M}\mathcal{B}=\sqrt{ab}\mathcal{I}$.
    \end{itemize}
\end{proposition}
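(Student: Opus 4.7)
The plan is to reduce each of the six properties to its matrix analogue (already recorded in the preceding lemma) by pushing everything through the mode-$3$ product with $M$. Since $M$ is invertible, the map $\times_3 M:\mathbb{K}^{n\times n\times p}\to\mathbb{K}^{n\times n\times p}$ is a bijection, so two tensors are equal if and only if all the frontal slices of their $\times_3 M$-images are equal. The key input is Remark \ref{rmk:FaceWiseGeometricMean}, which states that
\begin{equation*}
((\mathcal{A}\#_{*_M}\mathcal{B})\times_3 M)^{(i)}=\widehat{\mathcal{A}}^{(i)}\#\widehat{\mathcal{B}}^{(i)}\quad\text{for all } i\in[p].
\end{equation*}

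First I would record four auxiliary translations under $\times_3 M$, all immediate from the definitions in Section \ref{Sec:Identity,Inverse,etc} and the identity $MM^{-1}=I_p$: (a) scalar and additive linearity, (b) $((\mathcal{A}*_M\mathcal{B})\times_3 M)^{(i)}=\widehat{\mathcal{A}}^{(i)}\widehat{\mathcal{B}}^{(i)}$, (c) $(\mathcal{A}^H\times_3 M)^{(i)}=(\widehat{\mathcal{A}}^{(i)})^H$, and (d) whenever $\mathcal{A}$ is $*_M$-invertible, $(\mathcal{A}^{-1}\times_3 M)^{(i)}=(\widehat{\mathcal{A}}^{(i)})^{-1}$ (uniqueness of the $*_M$-inverse, from Remark \ref{rmk:UniquenessOfInverse}, forces this).

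With these in hand, properties (i), (iii), (v), (vi) are immediate: in each case compute the $i$-th frontal slice of the $\times_3 M$-image of both sides, apply the matrix lemma slice-by-slice, and conclude by injectivity of $\times_3 M$. For instance, $(a\mathcal{I})\times_3 M$ has all frontal slices $aI_n$, so by consistency with scalars for matrices each slice of $((a\mathcal{I})\#_{*_M}(b\mathcal{I}))\times_3 M$ equals $\sqrt{ab}\,I_n$, giving (vi). For (ii), translation (d) plus Remark \ref{rmk:FaceWiseGeometricMean} give
\begin{equation*}
(((\mathcal{A}\#_{*_M}\mathcal{B})^{-1})\times_3 M)^{(i)}=(\widehat{\mathcal{A}}^{(i)}\#\widehat{\mathcal{B}}^{(i)})^{-1}=(\widehat{\mathcal{B}}^{(i)})^{-1}\#(\widehat{\mathcal{A}}^{(i)})^{-1}=((\mathcal{B}^{-1}\#_{*_M}\mathcal{A}^{-1})\times_3 M)^{(i)},
\end{equation*}
using the matrix inversion order-reversing property. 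Property (iv) is handled analogously: combining (b) and (c), $((\mathcal{C}^H*_M\mathcal{A}*_M\mathcal{C})\times_3 M)^{(i)}=(\widehat{\mathcal{C}}^{(i)})^H\widehat{\mathcal{A}}^{(i)}\widehat{\mathcal{C}}^{(i)}$, so the matrix congruence invariance applies slicewise to give the tensor version.

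There is essentially no serious obstacle; the entire argument is a mechanical transfer through the isomorphism $\times_3 M$. The one point requiring care is (iv), where applying matrix congruence invariance on the $i$-th slice requires each $\widehat{\mathcal{C}}^{(i)}$ to be invertible. This holds because $\mathcal{C}$ is assumed $*_M$-invertible and $M$ is invertible: translation (d) then guarantees that every frontal slice of $\widehat{\mathcal{C}}$ is an invertible matrix, so the hypothesis of the matrix identity is met.
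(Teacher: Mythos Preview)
Your proposal is correct and follows exactly the approach the paper indicates: the paper's proof consists of a single sentence stating that the proposition follows from the preceding lemma (the matrix properties) and Remark \ref{rmk:FaceWiseGeometricMean}, and you have simply spelled out this slice-wise reduction in detail.
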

 
Note that this proposition does not contain an analogue of the determinantal identity property. Now, first we must consider an analogue of the determinant for third-order tensors. One of generalizations of the determinant is the \emph{resultant}. Briefly, the resultant of homogeneous polynomials $F_0,F_1,...,F_n \in \mathbb{C}[x_0,...,x_n]$ is an object presenting whether the polynomials have a nontrivial common zero in $\mathbb{C}^{n+1}$ or not. More precisely, the resultant is zero if and only if the polynomials have a nontrivial common zero. We refer to \cite{MR2122859} for the details. Note that the determinant of an $n \times n$ matrix $A=[a_{ij}]$ is zero if and only if the homogeneous linear polynomials $l_0,l_1,...,l_n \in \mathbb{C}[x_0,...,x_n]$ have a nontrivial common zero, where each $l_j$ is the homogeneous linear polynomial with coefficients on the $j$-th column of $A$, i.e., $l_j=\sum_{i=1}^n a_{ij}x_i$. Thus, the following definition is natural.
\begin{definition}
    For an $n\times n \times n$ tensor $\mathcal{A} \in \mathbb{K}^{n \times n \times n}$, its \emph{resultant}, denoted by $\operatorname{Res}(\mathcal{A})$, is the resultant of the homogeneous polynomials $F_0,F_1,...,F_n$, where $F_i$ is the quadratic homogeneous polynomial corresponding to the (not necessarily symmetric) $i$-th lateral slice of $\mathcal{A}$ for all $i \in [n]$.
\end{definition}

With respect to this generalization, even for $M=I_n$ the resultantal (determinantal) identity does not hold in general because of the following example. We will use the resultant formula for two quadratic homogeneous polynomials $F_1=a_1x^2+a_2xy+a_3y^2$ and $F_2=b_1x^2+b_2xy+b_3y^2$:
\begin{equation*}
    \operatorname{Res}(F_1,F_2)= a_3^2b_1^2-a_2a_3b_1b_2+a_1a_3b_2^2+a_2^2b_1b_3-2a_1a_3b_1b_3-a_1a_2b_2b_3+a_1^2b_3^2.
\end{equation*}

\begin{example}\label{ex:CounterexampleResultant}
    Let $M=I_2$. We consider the following two $2\times2\times2$ tensors
    \begin{equation*}
        \mathcal{A}=\left[\begin{array}{cc|cc}
            2 & 1 & 4 & 1\\
            1 & 2 & 1 & 4
        \end{array}\right]~\text{and}~\mathcal{B}=\left[\begin{array}{cc|cc}
            2 & -1 & 2 & -1\\
            -1 & 2 & -1 & 2
        \end{array}\right].
    \end{equation*}
    Then
    \begin{equation*}
    \begin{aligned}
    \operatorname{Res}\bigl(\mathcal A\#_{*_M}\mathcal B\bigr)\approx 3.3287.
    \end{aligned}
    \end{equation*}
    On the other hand,
    \begin{equation*}
    \begin{aligned}
    \sqrt{\operatorname{Res}(\mathcal A)\,\operatorname{Res}(\mathcal B)}=0.
    \end{aligned}
    \end{equation*}
    Thus, 
    \begin{equation*}
    \operatorname{Res}\bigl(\mathcal A\#_{*_M}\mathcal B\bigr)
    \;\neq\;
    \sqrt{\operatorname{Res}(\mathcal A)\,\operatorname{Res}(\mathcal B)}.
    \end{equation*}
\end{example}

Another possible generalization of the determinant is Cayley's hyperdeterminant. In this paper, we only deal with Cayley's second hyperdeterminant.

\begin{definition}\label{def:hyper}
Let $\mathcal{A}=(a_{ijk})\in \mathbb{K}^{2\times2\times2}$. \emph{Cayley's second hyperdeterminant} of $\mathcal{A}$, denoted by $\operatorname{Det}(\mathcal{A})$, is defined as the following degree-4 polynomial
\begin{equation*}
\begin{aligned}
\operatorname{Det}(\mathcal{A})
&=a_{000}^2\,a_{111}^2
+a_{001}^2\,a_{110}^2
+a_{010}^2\,a_{101}^2
+a_{100}^2\,a_{011}^2\\
&-2\Bigl(
a_{000}a_{001}a_{110}a_{111}
+a_{000}a_{010}a_{101}a_{111}
+a_{000}a_{011}a_{101}a_{110}\\
&\quad\;+a_{001}a_{010}a_{101}a_{110}
+a_{001}a_{011}a_{100}a_{111}
+a_{010}a_{011}a_{100}a_{101}
\Bigr)\\
&+4
\bigl(
a_{000}a_{011}a_{101}a_{110}
+a_{001}a_{010}a_{100}a_{111}
\bigr).
\end{aligned}
\end{equation*}
\end{definition}

Again, even for $M=I_n$ the Cayley's hyperdeterminantal  identity does not hold in general for this generalization of the determinant. We will consider the tensors in Example \ref{ex:CounterexampleResultant}.

\begin{example}\label{ex:hyper}
    Let $M=I_2$, and take the tensors $\mathcal{A}$ and $\mathcal{B}$ in Example \ref{ex:CounterexampleResultant}.
    Then 
    \begin{equation*}
    \operatorname{Det}(\mathcal{A}\#_{*_M}\mathcal{B})\approx 0.8753,
    \end{equation*}
    and 
    \begin{equation*}
    \sqrt{\operatorname{Det}(\mathcal A)\operatorname{Det}(\mathcal B)}= 0.
    \end{equation*}
    Thus,
    \begin{equation*}
    \operatorname{Det}\bigl(\mathcal A\#_{*_M}\mathcal B\bigr)
    \;\neq\;
    \sqrt{\operatorname{Det}(\mathcal A)\,\operatorname{Det}(\mathcal B)}.
    \end{equation*}
\end{example}

\subsubsection{Wasserstein mean for pseudo-positive definite tensors}

Until now, we have proved that the tensor geometric mean under the $*_M$-product is the midpoint of the geodesic on the manifold of $*_M$-pseudo-positive-definite tensors for a specific Riemannian metric. Now, we will change the metric to another Riemannian metric and introduce the Wasserstein mean under the $*_M$-product of tensors. For the matrix case, let us change the Riemannian metric on $H^{n \times n}_{++}$ into $\hat{g}'$, locally defined by
\begin{equation*}
    \hat{g}'_P(A,B)=\sum_{i,j=1}^n s_i \frac{\operatorname{Re}(\overline{a_{ji}}b_{ji})}{(s_i+s_j)^2}
\end{equation*}
at $P$, where $P$ is diagonalized into $\operatorname{diag}(s_1,...,s_n)$ in an orthogonal basis, and $A,B$ are represented into $[a_{ij}],[b_{ij}]$ in the same basis. The motivation and construction of this Riemannian metric are introduced in \cite{MR3992484}. In that paper, it is proved that a geodesic from $A \in H^{n \times n}_{++}$ to $B \in H^{n \times n}_{++}$ is parametrized by
\begin{equation*}
    \gamma': t \in [0,1] \mapsto (1-t)^2A+t^2B+t(1-t)\left((AB)^{\frac{1}{2}}+(BA)^{\frac{1}{2}}\right)\in H^{n \times n}_{++}.
\end{equation*}
so that the Wasserstein mean is the midpoint of this geodesic.

Now, for the tensor case, we change the Riemannian metric on $\mathbb{H}^{n \times n \times p}_{++}$ into $g'$, locally defined by
\begin{equation*}
    g'_{\mathcal{P}}(\mathcal{A},\mathcal{B})=\sum_{i=1}^p \hat{g}'_{\widehat{\mathcal{P}}^{(i)}}\left((\widehat{A})^{(i)},(\widehat{B})^{(i)}\right)
\end{equation*}
at $\mathcal{P} \in \mathbb{H}^{n \times n \times p}_{++}$. Then, obviously, the geodesic $\gamma$ induces the geodesic
\begin{equation*}
    \Gamma':t \in [0,1] \mapsto (1-t)^2\mathcal{A}+t^2\mathcal{B}+t(1-t)\left((\mathcal{A}*_M\mathcal{B})^{\frac{1}{2}}+(\mathcal{B}*_M \mathcal{A})^{\frac{1}{2}}\right)  \in \mathbb{H}^{n \times n \times p}_{++}.
\end{equation*}
By taking its midpoint, we define the tensor Wasserstein mean.

\begin{definition}
For $\mathcal{A},\mathcal{B}\in\mathbb{H}^{n\times n\times p}_{++}$, The \textit{Wasserstein mean of $\mathcal{A}$ and $\mathcal{B}$ under the $*_M$-product}, denoted by $\mathcal{A}\diamond\mathcal{B}$, is defined as
\begin{equation}
\mathcal{A}\diamond\mathcal{B}=\frac{1}{4}\left(\mathcal{A}+\mathcal{B}+(\mathcal{A}*_M\mathcal{B})^{1/2}+(\mathcal{B}*_M\mathcal{A})^{1/2}\right).
\end{equation}
\end{definition}

In addition, for a fixed $t \in [0,1]$ it is also natural to define the weighted Wasserstein mean by
\begin{equation*}
    \mathcal{A}\diamond_t\mathcal{B}:=(1-t)^2\mathcal{A}+t^2\mathcal{B}+t(1-t)\left((\mathcal{A}*_M\mathcal{B})^{\frac{1}{2}}+(\mathcal{B}*_M \mathcal{A})^{\frac{1}{2}}\right).
\end{equation*}
Similarly to the geometric mean case, the multivariate Wasserstein mean for more than two $*_M$-pseudo-positive-definite tensors can be defined as the barycenter of those tensors in the Riemannian manifold.

In the case of tensor geometric means, Theorem \ref{thm:RiccatiTensorEquation} makes it possible to calculate it numerically and fast. However, to the best of our knowledge, no such approach is known for the tensor Wasserstein mean. 

\subsection{Pseudo-SVD under the $*_{M}$- product with injective maps and application on data compression} \label{Sec:applications_svd}

\subsubsection{Method}\label{Sec:SVD_method}
For the tensor-tensor multiplication under invertible and surjective maps, the authors of \cite{MR3394164} and \cite{keegan24} defined their corresponding $*_M$-SVD. However, since the tensor-tensor multiplication under injective maps is not associative (see Example \ref{ex:InjectiveNotAssociative}), it is not possible to define a proper $*_M$-SVD such that $\mathcal{A}=\mathcal{U} *_M \mathcal{S} *_M \mathcal{V}^H$.
We define, therefore, a tensor function that we call \emph{$*_M$-pseudo-SVD} that will also have similar applications in data science, which we will showcase with an experiment in data compression.

First, we introduce the concept of Frobenius norm of third-order tensors.

\begin{definition}\label{defn:frobeniusnorm}
Let $\mathcal{A}\in\mathbb{K}^{m\times n\times p}$. The \emph{Frobenius norm of third-order tensors} is defined slice-wise, as follows
\begin{equation*}
\| \mathcal{A}\|_F^2=\sum\limits_{k=1}^{p}\|\mathcal{A}_{:,:,k}\|_F^2.
\end{equation*}
    
\end{definition}

\begin{definition}\label{defn:pseudoSVD}
Let $M\in\mathbb{K}^{q\times p}$ be of full rank. We define a tensor function from $\mathbb{K}^{m\times n\times p}$ to $\mathbb{K}^{m\times n\times p}$, called the \emph{$*_M$-pseudo-$k$-SVD}, that sends a tensor $\mathcal{A}$ to 
\begin{equation*}
    \widetilde{\mathcal{A}}=\left[\begin{array}{c|c|c}
     \mathcal{\widehat{U}}^{(1)}_k\mathcal{\widehat{S}}^{(1)}_k(\mathcal{\widehat{V}}^{(1)}_k)^H & \cdots & \widehat{\mathcal{U}}^{(q)}_k\widehat{\mathcal{S}}^{(q)}_k(\widehat{\mathcal{V}}^{(q)}_k)^H
\end{array}\right]\times_3 M^+,
\end{equation*}
where $k\in [\operatorname{min}\{m,n\}]$ is the truncation parameter of the matrix $k$-SVD performed slice-wise to obtain the matrices $\widehat{\mathcal{U}}_k^{(i)}$, $\widehat{\mathcal{V}}_k^{(i)}$ and $\widehat{\mathcal{S}}_k^{(i)}$ for $i\in [q]$. Then,
$\widehat{\mathcal{U}}_k= \left[\begin{array}{c|c|c}
     \mathcal{\widehat{U}}^{(1)}_k & \cdots & \widehat{\mathcal{U}}^{(q)}_k
\end{array}\right]$ and $\widehat{\mathcal{V}}_k= \left[\begin{array}{c|c|c}
     \mathcal{\widehat{V}}^{(1)}_k& \cdots & \widehat{\mathcal{V}}^{(q)}_k
\end{array}\right]$ are stacks of unitary matrices, $\widehat{\mathcal{S}}_k= \left[\begin{array}{c|c|c}
     \mathcal{\widehat{S}}^{(1)}_k& \cdots & \widehat{\mathcal{S}}^{(q)}_k
\end{array}\right]$ is f-diagonal and the Frobenius norms of the diagonal tubes are ordered non-increasingly $ \|\widehat{\mathcal{S}}_{1,1,:}\|_F \geq \|\widehat{\mathcal{S}}_{2,2,:}\|_F \geq \cdots \geq \|\widehat{\mathcal{S}}_{l,l,:}\|_F \geq 0$, for $l = \min\{m, n\}$.
\end{definition}

We provide the pseudocodes in Algorithm \ref{algorithm:1} and Algorithm \ref{algorithm:2} for the algorithms of both the $*_M$-pseudo-SVD and the truncated $*_M$-pseudo-$k$-SVD.

\begin{algorithm}[h]
\caption{Full $*_M$-pseudo-SVD}
\label{algorithm:1}
\begin{algorithmic}[1]
\Require Tensor $\mathcal{A} \in \mathbb{K}^{m \times n\times p}$, injective matrix $M \in \mathbb{K}^{q \times p}$
\State Compute $\widehat{\mathcal{A}} = \mathcal{A} \times_3 M$
\For{$i = 1$ to $q$}
    \State Compute SVD of frontal slices $\widehat{\mathcal{A}}^{(i)} = U^{(i)} S^{(i)} (V^{(i)})^H$
\EndFor
\State Stack $U^{(i)}, S^{(i)}, V^{(i)}$ into tensors $\widehat{\mathcal{U}}, \widehat{\mathcal{S}}, \widehat{\mathcal{V}}$
\State Map back
\begin{equation*}
(\widehat{\mathcal{U}}\triangle\widehat{\mathcal{S}}\triangle\widehat{\mathcal{V}})\times_3 M^+
\end{equation*}
\end{algorithmic}
\end{algorithm}

\begin{algorithm}[h]
\caption{Truncated $*_M$-pseudo-SVD}
\label{algorithm:2}
\begin{algorithmic}[1]
\Require Tensor $\mathcal{A} \in \mathbb{K}^{m \times n\times p}$, matrix $M \in \mathbb{K}^{q \times p}$, $k \leq \min\{m, n\}$
\State Compute $\widehat{\mathcal{A}} = \mathcal{A} \times_3 M$
\State Compute SVD of frontal slices
    $\widehat{A}^{(i)}=U^{(i)} S^{(i)} (V^{(i)})^H$
\State $U^{(i)}, S^{(i)}, V^{(i)}$ into tensors $\widehat{\mathcal{U}}, \widehat{\mathcal{S}}, \widehat{\mathcal{V}}$
\State Truncate and perform face-wise multiplication
\begin{equation*}
\widehat{A}_k\approx\widehat{\mathcal{U}}(:,:,1:k)\triangle\widehat{\mathcal{S}}(1:k,1:k,:) \triangle \widehat{\mathcal{V}}^{H}(:,:,1:k)
\end{equation*}
\State Map back
\begin{equation*}
\widetilde{A} \approx \widehat{A}_k
\times_3 M^+
\end{equation*}
\end{algorithmic}
\end{algorithm}

\subsubsection{Experiment}\label{Sec:SVD_experiment}

We perform an experiment on data compression with injective matrix $M$ on the Indian Pines hyperspectral dataset \cite{PURR1947} and compare the results to those obtained in \cite{keegan24}. This dataset consists of agricultural imaging from northwestern Indiana and includes a $145 \times 145$ spatial grid with $220$ spectral bands. This means that instead of a traditional color image with RGB channels, this image has 220 channels from a 220-point spectrum. In our experiments we denote this cube by $\mathcal{A} \in \mathbb{R}^{m \times n \times p}=\mathbb{R}^{145 \times 145 \times 220}$,
and normalize it by its Frobenius norm
\begin{equation*}
\mathcal{A} \leftarrow \frac{\mathcal{A}}{\|\mathcal{A}\|_F},
\end{equation*}
so that subsequent error measurements correspond to relative spectral fidelity.

We will compare the relative compression rate at different truncation $k$, as well as the relative error, where these metrics are defined as 
\begin{equation*}
    RE=\frac{\| \mathcal{A}-\widehat{\mathcal{A}} \|_F}{\|\mathcal{A} \|_F}, \quad CR=\frac{\operatorname{st}(\mathcal{A})}{\operatorname{st}(\widehat{\mathcal{A}})+\operatorname{st}(M)},
\end{equation*}
with $\operatorname{st}(\cdot)$ referring to the computation of the storage cost of the object. Target values are relative errors close to zero and compression ratios bigger than one (that is, compression has been achieved).

To guarantee a fair comparison to the results in \cite{keegan24}, we have slightly modified their code. In their experiments with orthogonal surjective matrices, the matrix $M$ is actually initially defined as a $p\times p$ invertible orthogonal matrix. After doing the face-wise SVD of $\widehat{\mathcal{A}}=\mathcal{A}\times_3 M$, they stack the resultant matrices $U^{(i)},S^{(i)},V^{(i)}$ into tensors, truncate them and perform their face-wise multiplications, as follows 
\begin{equation*}
    U(: ,1:k, :  )\triangle S(1:k,1:k, : )\triangle V( : ,1:k, : ).
\end{equation*} 
Then, they slice the first $s$ columns of $M^+$ ($=M^\top$) before performing their mode-3 product. This process allows them to compare the results for different $p\times s$ matrices. 

\begin{remark}\label{rmk:no_slicing_pseudoinverse}
This slicing of the pseudoinverse is not possible for injective matrices, since $M^+(:,1:t)M(1:t,:)\neq I$, and thus we would not be implementing the $*_M$-product correctly. 

For the evaluation of the performance of the experiments with injective matrices, we do not slice the columns of the pseudoinverse, instead using its whole surjective pseudoinverse to perform the mode-3 product $\widehat A =A_k\times_3 M^+$. Then, for each $1\le s\le p$, we report the results of  
$\mathrm{err}(s)
=\bigl\lVert\,A(:,:,1:s)\;-\;\widehat A(:,:,1:s)\bigr\rVert_F$. 
\end{remark}

Comparing the results of our implementation directly to those reported in \cite{keegan24} would be impossible, since the two sets of results provide different information. As such, we implement their method with the full invertible orthogonal matrix and its inverse, and then measure how our approximation compares to this baseline. They performed experiments on several matrices, but we will compare to only $M=U_3$, where $U_3$ is the left singular matrix of the mode-3 unfolding of the tensor data $\mathcal{A}$. This choice of matrix was the one that yielded the best results in their paper due to it being a well-chosen data-dependent matrix.

By truncating both the original and reconstructed cubes to the same $n_1\times n_2\times ps$ block, we ensure a fair comparison of the surjective (square, invertible) and injective (rectangular) maps, without favoring one over the other. 

To minimize distorsion we have chosen an injective Jonhson-Lindenstrauss-type embedding.  A fast Johnson-Lindenstrauss transform was introduced for the first time in \cite{ailon2009fast}. Johnson-Lindenstrauss-type embeddings were then used in \cite{sarlos2006improved} to improve performance of previous SVD algorithms. All of these mappings are inspired by Johnson-Lindenstrauss' lemma \cite{johnson1984extensions}, which asserts that points in a high-order dimension can be projected into a much lower dimension while approximately preserving the distance between them. Since our map is injective, we are dealing with an oversampling task, but Johnson-Lindenstrauss-type embeddings can also be used in this case.

\begin{definition}
Let $\mathcal{A}\in\mathbb{R}^{m\times n\times p}$ be a tensor spectral image, with $p$ being the number of spectral bands, and fix an oversampling factor of~2.  We construct an injective Johnson–Lindenstrauss (JL)-type mapping $M\colon\mathbb R^{p}\to\mathbb R^{q}$, with $q>p$, similar to the one in \cite[Theorem 3]{sarlos2006improved}:

Set $N = 2^{\lceil \log_{2}(2p)\rceil}$ (we will take $N=512$) and $q = N$. Let $H_{q}\in\{\pm1\}^{q\times q}$ be the (Walsh–)Hadamard matrix, satisfying $H_{q}H_{q}^{T}=q\,I_{q}$. Form the diagonal sign‐matrix
\begin{equation*}
D \;=\;\operatorname{diag}(\xi_{1},\dots,\xi_{q})\;\in\;\mathbb R^{q\times q},
\end{equation*}
where $\xi_{1},\dots,\xi_{q}\in\{\pm1\}$.
Define the normalized signed‐Hadamard map as
\begin{equation*}
S=\frac{1}{\sqrt{p}}\,H_{q}\,D
    \;\in\;\mathbb R^{q\times q}.
\end{equation*}
Next, select uniformly at random a subset 
$R \subset [q]$, with $|R| = p$,
and let $P\in\{0,1\}^{p\times q}$ be the corresponding row‐selection matrix (each row of $P$ has exactly one 1, picking one index in $R$).
Finally, we define the injective JL‐type embedding as:
\begin{equation*}
M = (P\,S)^{\top}
    \in\mathbb R^{q\times p},
\end{equation*}
which has full column rank and, by standard Johnson–Lindenstrauss theory, preserves distances.
\end{definition}

Using an overcomplete injective matrix may prompt questions about how it compares to the identity map, since the injective map first embeds into a higher-dimensional space and then comes back to the same lower-dimensional space. To answer this, we will also compare the results to the performance of the identity matrix.

\subsubsection{Results}\label{Sec:results}

The results seen in Figure~\ref{fig:re_vs_cr}, Figure~\ref{fig:re_vs_cr_220}, Table~\ref{tab:inj_vs_surj_rel_err} and Table~\ref{tab:inj_vs_surj_cr} on the relative error and the compression ratio for both the injective case (JL) and the surjective case ($U_3$) suggest that while the surjective case made use of a data-dependent matrix and provided a more significant compression ratio (i.e., less storage required for the compressed image), the injective case is capable of providing slightly higher-fidelity reconstructions from $k=1$ to around $k=100$, even with a data-independent matrix, though with a lower compression ratio. For higher $k$ close to the maximum value $k=145$, both the surjective and injective models provide similar performances in terms of the compression ratio and the error.

\begin{table}[h]
  \centering
  \caption{Relative error for injective (JL), invertible ($U_3$) and identity ($I$) maps at various truncations \(k\) and slice counts \(s\).}
  \label{tab:inj_vs_surj_rel_err}
  \begin{tabular}{ccccc}
    \toprule
    \(k\) & \(s\)
      & \multicolumn{1}{c}{RE (JL)}
      & \multicolumn{1}{c}{RE (\(U_3\))}
      & \multicolumn{1}{c}{RE ($I$)}\\
    \midrule
    \multirow{5}{*}{1}
      & 1   & 0.0080     & 0.0081   & 0.0078     \\
      & 10  & 0.0241     & 0.0250   & 0.0241     \\
      & 50  & 0.0977     & 0.0996   & 0.0986   \\
      & 100 & 0.1124     & 0.1147   & 0.1132   \\
      & 220 & 0.1167     & 0.1188   & 0.1172    \\
    \addlinespace
    \hline
    \addlinespace
    \multirow{5}{*}{50}
      & 1   & 0.0035    & 0.0035   & 0.0037 \\
      & 10  & 0.0048    & 0.0052   & 0.0051  \\
      & 50  & 0.0119    & 0.0144   & 0.0146  \\
      & 100 & 0.0153    & 0.0180   & 0.0185 \\
      & 220 & 0.0167    & 0.0185   & 0.0186 \\
    \addlinespace
    \hline
    \addlinespace
    \multirow{5}{*}{100}
      & 1   & \num{6.08e-4}    & 0.0010   & 0.0011 \\
      & 10  & \num{9.56e-4}    & 0.0014   & 0.0015 \\
      & 50  & 0.0021    & 0.0034   & 0.0035  \\
      & 100 & 0.0028    & 0.0044   & 0.0045 \\
      & 220 & 0.0036    & 0.0045   & 0.0045 \\
    \addlinespace
    \hline
    \addlinespace
    \multirow{5}{*}{145}
      & 1   & \num{2.11e-16}  & \num{1.40e-16} &    \num{1.44e-16}\\
      & 10  & \num{8.09e-16}  & \num{7.62e-16} &     \num{1.28e-15}\\
      & 50  & \num{1.93e-15}  & \num{1.63e-15} &     \num{2.08e-15} \\
      & 100 & \num{2.54e-15}  & \num{2.08e-15} &     \num{2.84e-15} \\
      & 220 & \num{2.95e-15}  & \num{2.19e-15} &     \num{3.03e-15} \\
    \bottomrule
  \end{tabular}
\end{table}

\begin{table}[h]
  \centering
  \caption{Relative compression ratio for injective (JL), invertible ($U_3$) and identity ($I$) maps at various truncations $k$ and $s=220$.}
  \label{tab:inj_vs_surj_cr}
  \begin{tabular}{cccc}
    \toprule
    $k$
      & \multicolumn{1}{c}{CR (JL)}
      & \multicolumn{1}{c}{CR ($U_3$)} 
      & \multicolumn{1}{c}{CR ($I$)}\\
    \midrule
    \multirow{1}{*}{1}
      & 17.71     & 23.49    & 31.15\\
    \addlinespace
    \hline
    \addlinespace
    \multirow{1}{*}{20}
      & 1.50      & 1.53     & 1.56 \\
    \addlinespace
    \hline
    \addlinespace
    \multirow{1}{*}{30}
      & 1.01      & 1.03     & 1.04\\
    \addlinespace
    \hline
    \addlinespace
    \multirow{1}{*}{50}
      & 0.61      & 0.62     & 0.62\\
    \addlinespace
    \hline
    \addlinespace
    \multirow{1}{*}{145}
      & 0.21      & 0.21     & 0.22\\
    \bottomrule
  \end{tabular}
\end{table}

\begin{figure}[h]
  \centering
    \includegraphics[width=0.7\textwidth]{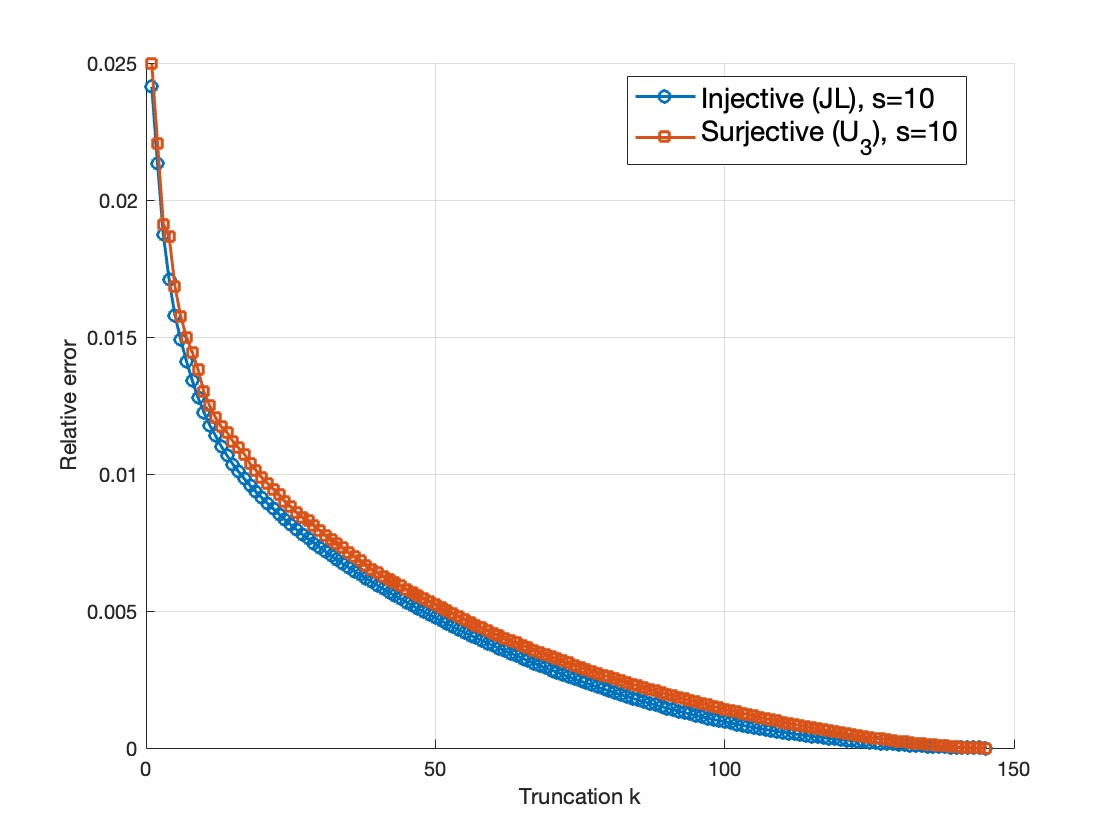}
    \caption{Comparison of the relative error with respect to the truncation $k$ for the injective (JL) and surjective/invertible ($U_3$) case with $s=10$.}
      \label{fig:re_vs_cr}
  \end{figure}
  
\begin{figure}

    \includegraphics[width=0.7\textwidth]{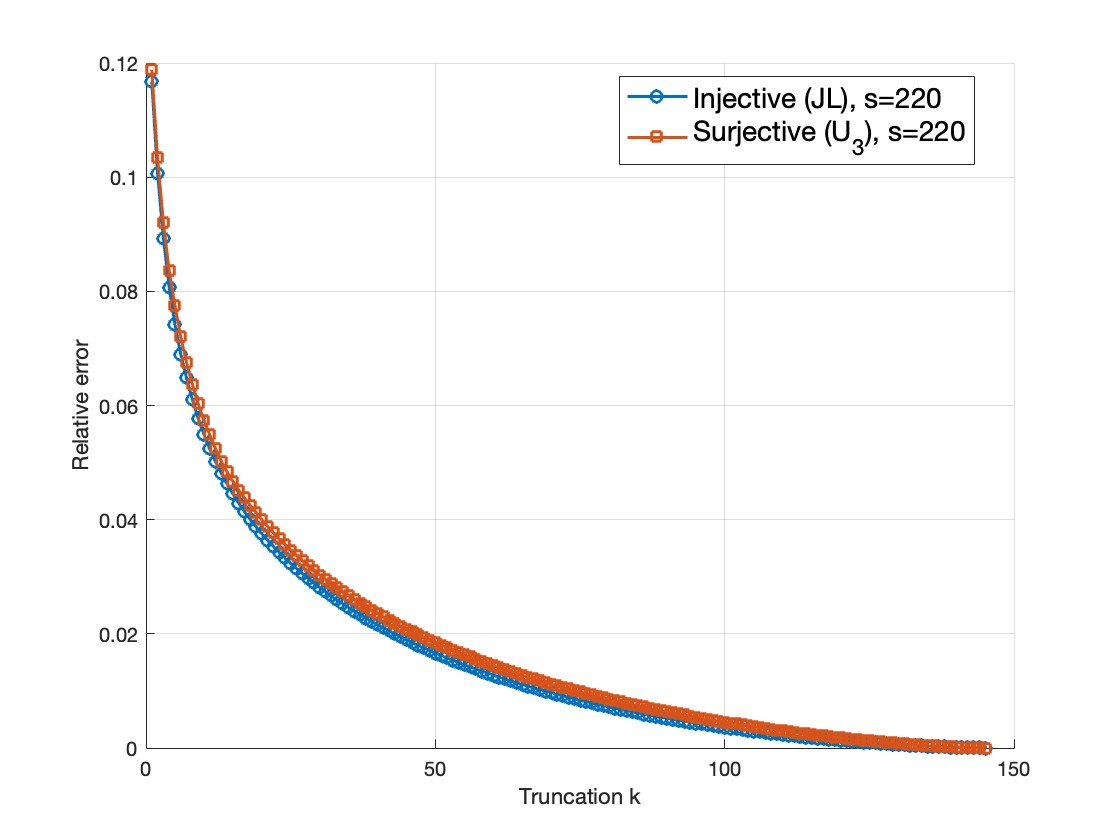}

    \caption{Comparison of the relative error with respect to the truncation $k$ for the injective (JL) and surjective/invertible ($U_3$) case with $s=220$.}
  \label{fig:re_vs_cr_220}
\end{figure}

\begin{figure}[h]
  \centering

  \begin{subfigure}[b]{\textwidth}
    \centering
    \begin{minipage}[b]{0.25\textwidth}
      \includegraphics[width=\textwidth]{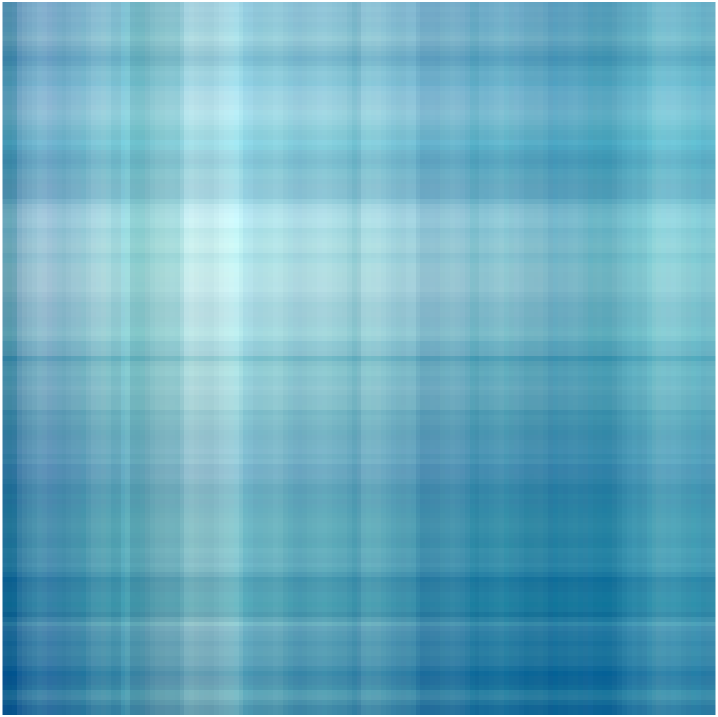}
    \end{minipage}\hspace{0.02\textwidth}
    \begin{minipage}[b]{0.25\textwidth}
      \includegraphics[width=\textwidth]{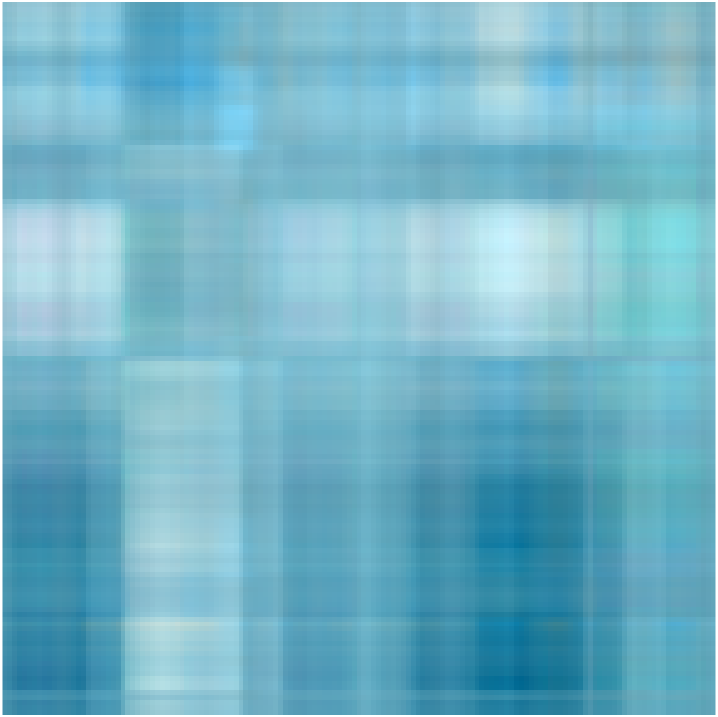}
    \end{minipage}
    \caption{$k=1, p=10$}
  \end{subfigure}

  \vspace{1em}

  \begin{subfigure}[b]{\textwidth}
    \centering
    \begin{minipage}[b]{0.25\textwidth}
      \includegraphics[width=\textwidth]{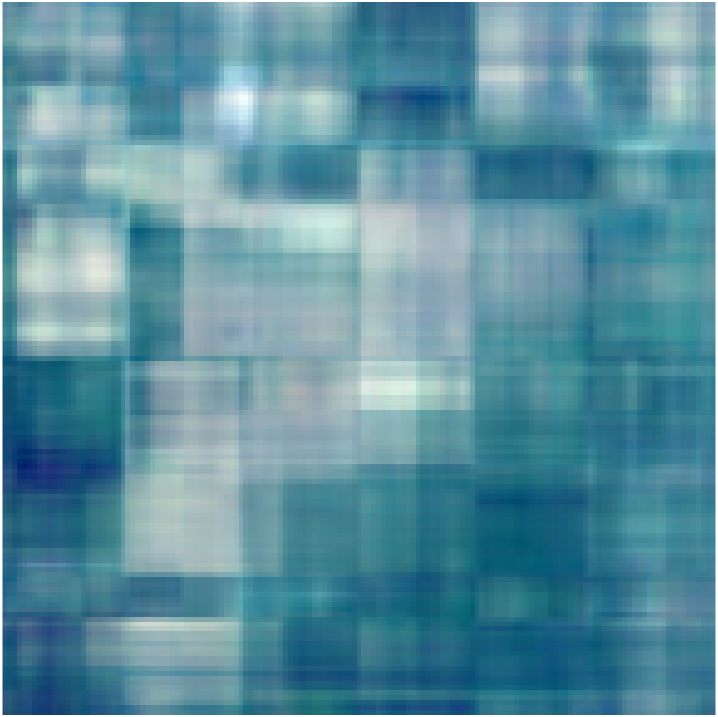}
    \end{minipage}\hspace{0.02\textwidth}
    \begin{minipage}[b]{0.25\textwidth}
      \includegraphics[width=\textwidth]{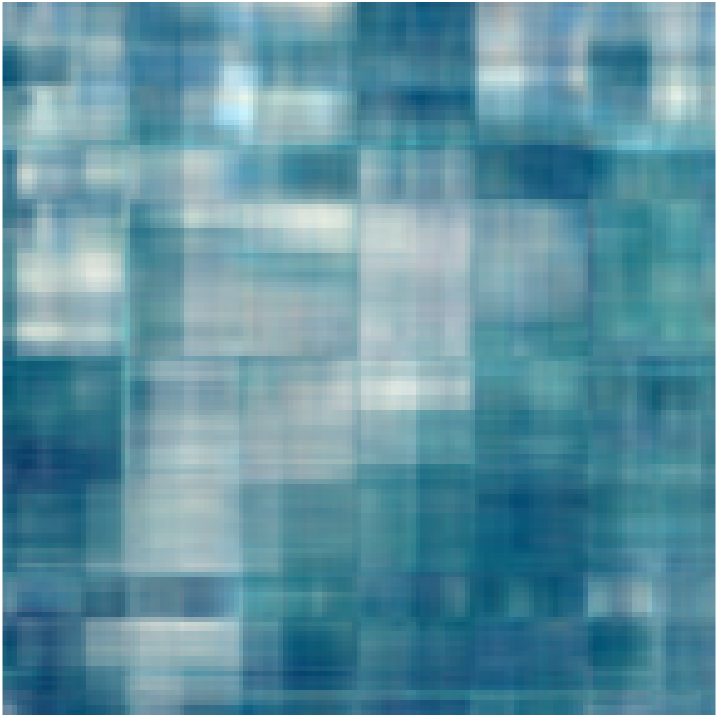}
    \end{minipage}
    \caption{$k=5, p=10$}
  \end{subfigure}

  \vspace{1em}

  \begin{subfigure}[b]{\textwidth}
    \centering
    \begin{minipage}[b]{0.25\textwidth}
      \includegraphics[width=\textwidth]{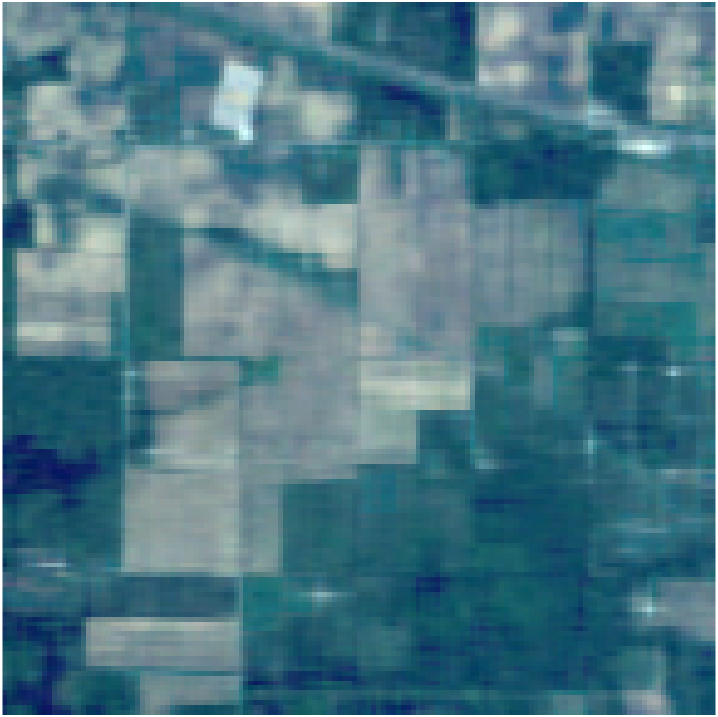}
    \end{minipage}\hspace{0.02\textwidth}
    \begin{minipage}[b]{0.25\textwidth}
      \includegraphics[width=\textwidth]{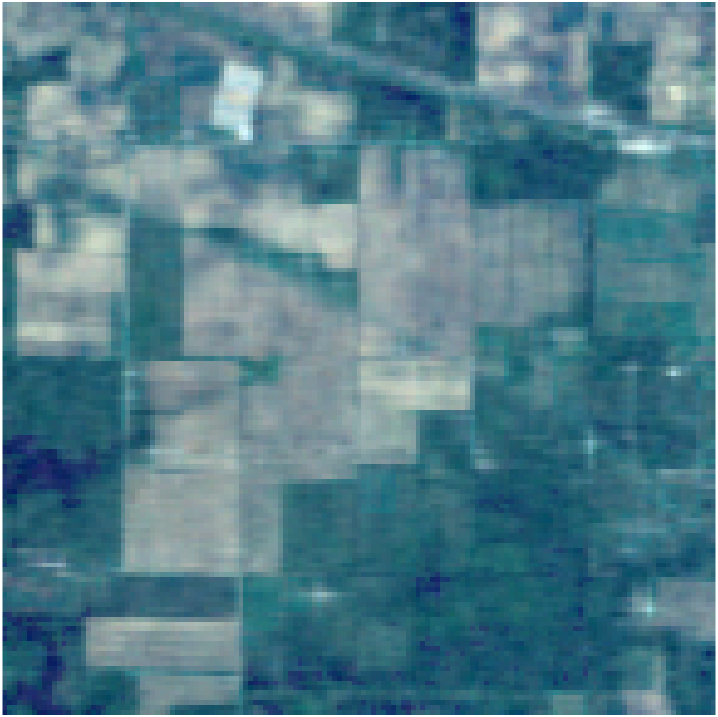}
    \end{minipage}
    \caption{$k=20, p=100$}
  \end{subfigure}

  \vspace{1em}

  \begin{subfigure}[b]{0.25\textwidth}
    \centering
    \includegraphics[width=\textwidth]{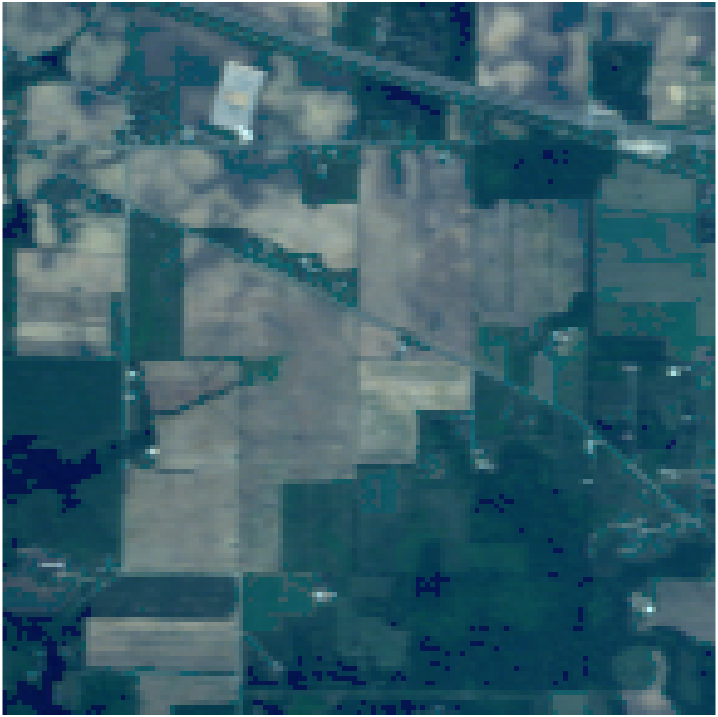}
    \caption{Original image}
  \end{subfigure}

  \caption{Truncation–slice compression snapshots. (Left) Using the injective JL‐type embedding, (right) using the surjective $U_3$ mapping, and (bottom) the original image.}
  \label{fig:injective_rgb_compare}
\end{figure}

It is worth noting that though we have showcased the performance for different $k$ and $s$, the slicing with $s$ is not done at the same step as in \cite{keegan24}, by Remark \ref{rmk:no_slicing_pseudoinverse}. In our case, both the original (normalized) image's channels and the (already calculated) approximation tensor's channels are sliced until $s$ and we keep track of how the error changes along the $s$ channels. Due to this construction, the error reported in this paper has a slightly different meaning as that in \cite{keegan24}.

The results on the identity matrix $I$ show that although the error starts similar to that of the injective case for small $k$, the injective case produces less error for higher $k$. This showcases that the injective matrix is actually doing something more than just sending the spectral bands to a higher-dimensional space and then pulling them back.

Figure \ref{fig:injective_rgb_compare} shows snapshots at different truncation stages $k$ for the JL injective matrix and the invertible $U_3$. All snapshots are taken at the channels in indices $(26,16,8)$, that is, the RGB values of the output image are taken from channel number 26, 16 and 8, respectively.

Thus, while the surjective case, by construction, provides advantages in terms of a bigger compression ratio, it is possible to conclude that the injective case is also capable of performing the $*_M$-pseudo-SVD successfully with relatively positive results. 

We provide the following reasoning for the difference in performance in both cases: the injective map performs an embedding into a higher-dimensional space (since $q>p$) and, thus, we have an overcomplete set of spectral bands. Literature suggests that overcomplete basis provide sparser representations \cite{overcomplete}. The surjective projection, on the other hand, and in particular the specific example in \cite{keegan24} with $M=U_3$, not only sends the slices to a lower-dimensional space, and thus, reducing the size of the stored data and resulting in a higher compression rate compared to that of the injective matrix, but the choice of $M$ also facilitates simplification when performing the face-wise SVD. 

We could argue, then, that each one should be used depending on the needs of the task: higher compression needed vs higher fidelity needed. 

\subsubsection{Conclusions}\label{Sec:conclusions} With this experiment we can conclude that, although the orthogonal case remains the most advantageous in terms of number of operations (storage costs) and compression rate, the injective case, in spite of its algebraic limitations - mainly, the lack of associativity -, also presents a tensor SVD-type operation.

\bibliography{ref.bib}

\end{document}